\documentclass[11pt]{amsart}
\usepackage[T1]{fontenc}
\usepackage{lmodern}
\usepackage{amsmath,amssymb,amsthm,mathtools}
\usepackage{microtype}
\usepackage{booktabs}
\usepackage{enumitem}
\usepackage[colorlinks=true,allcolors=blue]{hyperref}

\allowdisplaybreaks
\newtheorem{theorem}{Theorem}[section]
\newtheorem{proposition}[theorem]{Proposition}
\newtheorem{lemma}[theorem]{Lemma}
\newtheorem{corollary}[theorem]{Corollary}
\theoremstyle{definition}
\newtheorem{definition}[theorem]{Definition}
\newtheorem{example}[theorem]{Example}
\theoremstyle{remark}
\newtheorem{remark}[theorem]{Remark}

\newcommand{\R}{\mathbb{R}}
\newcommand{\abs}[1]{\lvert #1\rvert}
\newcommand{\norm}[1]{\lVert #1\rVert}
\newcommand{\cA}{\mathcal{A}}

\newcommand{\cQ}{\mathcal{Q}}

\newcommand{\cV}{\mathcal{V}}
\newcommand{\cW}{\mathcal{W}}
\newcommand{\ind}{\mathbf{1}}

\newcommand{\eps}{\varepsilon}
\newcommand{\Lfac}{\mathfrak L}
\newcommand{\disjunion}{\mathbin{\dot\bigcup}}

\DeclareMathOperator{\dist}{dist}
\DeclareMathOperator{\width}{width}
\DeclareMathOperator{\co}{co}

\title[Profile-Stable Multiplicity Factoring]{Profile-Stable Buffered
Multiplicity Factoring in Four Dimensions}
\author{Zhixu Hua}
\address{Changkong College, Nanjing University of Aeronautics and Astronautics,
Nanjing, Jiangsu, China}
\email{hzx001@nuaa.edu.cn}

\author{Xinshun Yao}
\address{College of General Aviation and Flight, Nanjing University of Aeronautics
and Astronautics, Liyang, Jiangsu, China}
\email{1586847002@QQ.COM}

\author{Xiufan Yang}
\address{School of Science, Nanjing University of Posts and Telecommunications,
Nanjing, Jiangsu, China}
\email{B25100020@njupt.edu.cn}
\date{}

\begin{document}

\begin{abstract}
Multiplicity factoring is usually formulated for child families at comparable
scales.  For children of mixed geometry, thickening at the shortest parent scale
produces nonuniform inflation ratios, and a single worst-case replacement does not
preserve the natural density normalization.

We prove a multiplicity-factoring theorem for finite indexed convex parent--child
families in $\R^4$ that accommodates arbitrary child shapes, scales, orientations,
aspect ratios, and repetitions.  The local geometry of each assigned family is
encoded by a thickening-weighted Frostman coefficient and a mean-normalized inflation
efficiency, both determined by the base family before any shading refinement.  The
resulting coarse density satisfies
\[
 \Lfac^{-A_\eps}(w_1/w_4)^\eps
 \mathfrak P_\eps\lambda^{K_\eps},
\]
up to the stated parameter-dependent constant, where $\mathfrak P_\eps$ is an
explicit profile of the parent loads and local efficiencies.

The proof thickens arbitrary measurable shadings, projects along a shortest parent
direction, establishes an indexed three-dimensional convex-union estimate in the
presence of collisions, and lifts the resulting density response back to four
dimensions.  A weighted H\"older inequality then assembles the nonuniform parent
data, while a common cellular refinement regularizes the fine and coarse
multiplicities and yields the stated parentwise multiplicity-product estimate.

Under a relative convex Frostman hypothesis, the profile is expressed explicitly in
the minimum, mean, and maximum inflation ratios.  The comparable-scale regime follows
as a specialization after a preliminary load selection.  Thus the theorem supplies
a structural factoring input for four-dimensional overlap arguments; deriving new
Kakeya maximal or Hausdorff-dimension estimates would require additional analytic
ingredients.
\end{abstract}

\maketitle
\tableofcontents

\section{Introduction}

Multiplicity factoring separates two sources of overlap in a parent--child family:
the overlap among the parents and the overlap among the children assigned to one
parent.  This separation is a recurring feature of Kakeya arguments, from the
overlap methods of C\'ordoba and Wolff through Katz--\L aba--Tao, Katz--Tao, and the
multilinear polynomial method
\cite{cordoba1977kakeya,wolff1995improved,katzlabatao2000minkowski,
katztao2002newbounds,guth2010endpoint}; see also \cite{zahl2026survey}.  The convex-set
framework of Wang and Zahl and the factoring perspective of Guth, Wang, and Zahl
provide the three-dimensional background for the argument below
\cite{wangzahl2025convexunions,guthwangzahl2026streamlined}.

A comparable-scale formulation makes this factorization particularly transparent.
When the children have uniformly comparable John axes, thickening them at a fixed
radius changes their volumes by comparable factors, and a single local density can
be passed through the argument.  That normalization is too rigid once the assigned
children have genuinely different geometries.  At the shortest parent scale $h$, the
quantities
\[
 r_W(V)=\frac{\abs{N_h(V)}}{\abs V}
\]
may vary widely, even within one parent.  Replacing them by the largest inflation
ratio loses the mean information needed to compare the original and thickened
densities, while discarding the weights breaks the relative Frostman normalization.

The phenomenon is not limited to children of different volumes.  If $\tau\ll h$,
consider equal-volume bodies with John-axis quadruples
\[
 (\tau^3,\tau,\tau,\tau)
 \quad\hbox{and}\quad
 (\tau^2,\tau^2,\tau,\tau).
\]
Their coordinatewise axis ratios become unbounded as $\tau\to0$, but both
$h$-neighborhoods have volume comparable to $h^4$.  Thus their inflation ratios are
comparable although the usual coordinatewise hypothesis fails.  Example
\ref{ex:mixed-subbuffer} gives the precise calculation.  This model suggests that
the relevant local datum is the behavior under thickening, rather than a direct
comparison of the child axes.

We implement this idea by weighting each child with its inflation ratio.  For an
assigned family, the minimum, volume-weighted mean, and maximum inflation ratios are
denoted by
\[
 r_W^-\le \overline r_W\le r_W^+.
\]
The thickening-weighted relative Frostman coefficient
$C^{\rm wt}_{F,h}(\cV_W,W)$ measures how the weighted body mass can concentrate in a
convex subregion of the parent.  Together with the mean-normalized density conversion
it gives the local efficiency $\kappa_W$.  Under an ordinary relative Frostman
hypothesis, this efficiency is bounded below by the explicit quantity
\[
 \eta_W^{(K)}=
 \frac{\overline r_W}{r_W^+}
 \left(\frac{r_W^-}{\overline r_W}\right)^K.
\]
All of these are base-family data: they are determined before the shading is refined
and remain fixed throughout the construction.

Theorem~\ref{thm:main} applies this local normalization to finite indexed convex
parent--child families in $\R^4$.  The parents have comparable John dimensions, but
the children may have arbitrary shapes, scales, aspect ratios, orientations, and
indexed repetitions.  At the common analytic radius $h=w_1/100$, each parent is
replaced by the stable buffer $W^\sharp=N_{w_1}(W)$.  A single refinement then
regularizes the inner, global, and coarse multiplicities, satisfies the parentwise
multiplicity product, and produces compatible fine and coarse fibres.  The density of
the coarse shading has the form
\[
 \Lfac^{-A_\eps}\left(\frac{w_1}{w_4}\right)^\eps
 \mathfrak P_\eps\lambda^{K_\eps},
\]
up to the stated dependence on $\eps$ and the parent comparability constant.  Here
$\mathfrak P_\eps$ is an explicit profile of the local efficiencies and parent loads.

The load profile is needed because the main theorem keeps every parent with positive
initial shaded mass.  If $D_W=T_W/\abs W$ denotes the child load of a parent, the
weighted H\"older coefficient
\[
 \mathfrak P_K(\mathbf D,\boldsymbol\alpha;\mathbf p)
 =
 \frac{(\sum_Wp_WD_W)^K}
 {(\sum_Wp_WD_W^{K/(K-1)}
       \alpha_W^{-1/(K-1)})^{K-1}}
\]
assembles the nonuniform local estimates without first selecting a common load
level.  It records the actual obstruction created by the fixed coarse denominator;
when the loads and local efficiencies are balanced, it reduces to a positive
constant of the expected size.

The local argument has three geometric components.  First, arbitrary measurable
shadings are thickened with their exact inflation weights.  The resulting bodies are
projected along a shortest John direction of the parent, which makes the volume of
each thickened body comparable to $h$ times its projected volume without restricting
the orientations of the children.  The
projected family need not be essentially distinct: different indexed children may
produce the same three-dimensional body.  We therefore pass from the
essentially-distinct convex-union estimate of Wang and Zahl to an indexed estimate by
means of affine-adapted canonical boxes.  On each collision layer, the loss in
geometric body mass is canceled by the gain in indexed Katz--Tao density.  Finally, a
layer-cake and Jensen argument lifts the three-dimensional density response back to
four dimensions without changing its exponent.

The global argument combines these parentwise estimates through one cellular
refinement.  Two initial selections make the inner and global fine multiplicities
uniform.  A global null cleanup aligns pointwise membership with positive-measure
cell activity.  The remaining selections regularize the indexed active-parent count
and the fine mass of a cell.  Because every stage restricts all children by a common
union of half-open cells, the multiplicity conclusions, fibre relation, local union
uniformity, and density estimate hold for the same refinement.  The weighted
H\"older inequality then converts the sum of the local responses into the displayed
profile density.

Comparable child geometry reappears as a specialization rather than as an input to
the main theorem.  Corollary~\ref{cor:comparable-scale} first selects a dyadic parent-load
level; coordinatewise child-axis comparability then makes the inflation efficiencies
uniformly positive.  The general profile consequently collapses to the familiar
comparable-scale conclusion, with the corresponding selected coarse normalization.
The distinction between the two normalizations is essential when the original loads
are highly unbalanced.

The result is intended as a structural factoring theorem for four-dimensional
overlap arguments.  It does not by itself supply the restriction, two-ends,
transversality, or other analytic input needed for a new four-dimensional Kakeya
maximal or Hausdorff-dimension estimate.

Section~\ref{sec:prelim} introduces the indexed conventions and profile data and
states the main results.  Section~\ref{sec:lift} develops the codimension-one lift.
Section~\ref{sec:indexed} proves the indexed three-dimensional convex-union estimate,
and Section~\ref{sec:local-r4} converts it into the nonuniform local theorem in four
dimensions.  Section~\ref{sec:cellular} contains the H\"older assembly and cellular
construction.  Section~\ref{sec:profile-corollaries} treats balanced profiles and the
comparable-scale regime, while Section~\ref{sec:sharpness} explains the geometric and
normalization choices through model examples.  The appendices collect quantitative
dependence, the derivation of the external input, and supplementary boundary cases.

\section{Indexed families and the profile theorem}
\label{sec:prelim}

This section fixes the indexed conventions and the quantities used to describe the
geometry inside one parent.  The order is important: the body family determines the
inflation and concentration data first, and subsequent shading refinements are
measured against those fixed denominators.  We then state the main theorem in a form
that keeps the initial positive-mass parent support visible in the coarse
normalization.

\subsection{Convex bodies and indexed notation}

A convex body in $\R^n$ is compact, convex, nonempty, and has positive
$n$-dimensional Lebesgue measure.  If $K$ is a convex body, its John ellipsoid is the
maximal-volume ellipsoid $E_K\subset K$ \cite{john1948extremum}.  We normalize it by
\begin{equation}\label{eq:john-inclusions}
 E_K\subset K\subset nE_K.
\end{equation}
Its ordered semiaxis lengths are
$a_1(K)\le\cdots\le a_n(K)$.  A John box is aligned with the principal axes and has
side lengths comparable to these semiaxes.

All families are finite and indexed.  Thus
$\cA=(A_i)_{i\in I}$ may contain $A_i=A_j$ for $i\ne j$, and sums and
multiplicities count indices.  For a shading $Y(A_i)\subset A_i$, write
\begin{align*}
 U(\cA,Y)&=\bigcup_{i\in I}Y(A_i),
 &M(\cA,Y)&=\sum_{i\in I}\abs{Y(A_i)},\\
 T(\cA)&=\sum_{i\in I}\abs{A_i},
 &\lambda(\cA,Y)&=\frac{M(\cA,Y)}{T(\cA)},\\
 m_{\cA,Y}(x)&=\sum_{i\in I}\ind_{Y(A_i)}(x),
 &\mu(\cA,Y)&=\frac{M(\cA,Y)}{\abs{U(\cA,Y)}}.
\end{align*}
The last quantity is used only for positive-measure unions.  The fibre at $x$ is
\[
 (\cA)_Y(x)=\{i\in I:x\in Y(A_i)\}.
\]
A refinement satisfies $Y'(A_i)\subset Y(A_i)$ for every index.  The base family
continues to carry all of its indices, with empty refined shading on a discarded
index, except when a theorem first restricts to its positive-mass input support and
then explicitly extends the output back to the original family.

For a positive-volume convex test body $K$, define
\[
 \Delta(\cA,K)=\frac1{\abs K}
 \sum_{i:A_i\subset K}\abs{A_i},
 \qquad
 \Delta_{\max}(\cA)=\sup_K\Delta(\cA,K).
\]
If every $A_i\subset W$, the relative Frostman constant is
\begin{equation}\label{eq:frostman-def}
 C_F(\cA,W)=
 \sup_{\substack{K\subset W\\K\text{ a convex body}}}
 \frac{\Delta(\cA,K)}{\Delta(\cA,W)}.
\end{equation}
This is imposed only on nonempty families.  For $h>0$, let
$N_h(A)=\{x:\dist(x,A)\le h\}$, and put
$\log_+t=\max\{0,\log t\}$.

\subsection{Local profile data determined by the base family}

Let $\cV_W$ be a nonempty assigned indexed family of convex bodies $V\subset W$.
Thickening at radius $h$ changes the volume of each child by a factor that depends on
its shape and scale.  The quantities below retain the minimum, maximum, and
body-mass-weighted mean of these inflation factors, while $D_W$ records the total
child load in the parent.
Set
\[
 T_W=\sum_{V\in\cV_W}\abs V,\qquad
 D_W=\frac{T_W}{\abs W},\qquad
 r_W(V)=\frac{\abs{N_h(V)}}{\abs V},
\]
\begin{equation}\label{eq:inflation-statistics}
 r_W^-=\min_{V\in\cV_W}r_W(V),\qquad
 r_W^+=\max_{V\in\cV_W}r_W(V),\qquad
 \overline r_W=\frac{\sum_Vr_W(V)\abs V}{T_W}.
\end{equation}
Thus $0<r_W^-\le\overline r_W\le r_W^+<\infty$.  For any measurable
shading or later refinement $Y$, define
\begin{equation}\label{eq:local-densities}
 d_W(Y)=\frac{\sum_V\abs{Y(V)}}{T_W},\qquad
 d^{\rm wt}_{W,h}(Y)=
 \frac{\sum_Vr_W(V)\abs{Y(V)}}{\sum_Vr_W(V)\abs V}.
\end{equation}
Since $r_W(V)\ge r_W^-$,
\begin{equation}\label{eq:weighted-unweighted-density}
 d^{\rm wt}_{W,h}(Y)\ge
 \frac{r_W^-}{\overline r_W}\,d_W(Y).
\end{equation}
The denominator in the weighted density is the total thickened body mass.  This is
why the conversion in \eqref{eq:weighted-unweighted-density} involves the mean
$\overline r_W$ rather than the worst inflation $r_W^+$.

The same weights also alter the relative Frostman distribution.  The next
coefficient measures how much of the weighted body mass can concentrate in a convex
subregion of $W$, normalized by its density in the full parent.

\begin{definition}[Weighted relative Frostman coefficient]
\label{def:weighted-frostman}
Define
\begin{equation}\label{eq:weighted-frostman}
 C^{\rm wt}_{F,h}(\cV_W,W)
 =
 \sup_{\substack{K\subset W\\K\text{ a convex body}}}
 \frac{\displaystyle
  \abs K^{-1}\sum_{\substack{V\in\cV_W\\V\subset K}}
        r_W(V)\abs V}
 {\displaystyle
  \abs W^{-1}\sum_{V\in\cV_W}r_W(V)\abs V}.
\end{equation}
\end{definition}

Testing $K=W$ shows $C^{\rm wt}_{F,h}\ge1$.  This coefficient and all quantities
in \eqref{eq:inflation-statistics} depend only on the base family, not on a shading.
The two efficiencies below combine weighted concentration with the density conversion.
The first retains the exact weighted Frostman coefficient; the second is its explicit
counterpart under an ordinary relative Frostman hypothesis.
For $K>1$, put
\begin{align}
 \kappa_W^{(K)}
 &=
 \bigl(C^{\rm wt}_{F,h}(\cV_W,W)\bigr)^{-1}
 \left(\frac{r_W^-}{\overline r_W}\right)^K,
 \label{eq:kappa-def}\\
 \eta_W^{(K)}
 &=
 \frac{\overline r_W}{r_W^+}
 \left(\frac{r_W^-}{\overline r_W}\right)^K
 =
 \frac{(r_W^-)^K}{r_W^+(\overline r_W)^{K-1}}.
 \label{eq:eta-def}
\end{align}
Both numbers lie in $(0,1]$.

The local efficiencies must eventually be combined across parents whose loads may
be very different.  For positive $D_W,\alpha_W$ and probability weights $p_W$, define
\begin{equation}\label{eq:profile-def}
 \mathfrak P_K(\mathbf D,\boldsymbol\alpha;\mathbf p)
 =
 \frac{\left(\sum_Wp_WD_W\right)^K}
 {\left(\sum_Wp_WD_W^{K/(K-1)}
          \alpha_W^{-1/(K-1)}\right)^{K-1}}.
\end{equation}

\subsection{The four-dimensional profile theorem}

Let
\[
 \cV=\disjunion_{W\in\cW}\cV_W
\]
be a disjoint union by indices.  Repetitions are not identified.  Given the input
shading, define its initial positive-mass parent support
\begin{equation}\label{eq:initial-parent-support}
 \cW_0=\left\{W\in\cW:
 M_W^{(0)}:=\sum_{V\in\cV_W}\abs{Y(V)}>0\right\}.
\end{equation}
When the global shaded mass is positive, $\cW_0$ is nonempty.  All profile sums and
the frozen coarse denominator below are over $\cW_0$.  Removing the other parent
groups preserves shaded mass and can only increase the fine density.  We denote the
indexed buffer family $(W^\sharp)_{W\in\cW_0}$ by $\cW_0^\sharp$.

\begin{theorem}[Profile-stable buffered factoring in $\R^4$]
\label{thm:main}
Fix $\Lambda_W\ge1$ and
$0<w_1\le w_2\le w_3\le w_4$.  Suppose that:
\begin{enumerate}[label=\textup{(H\arabic*)}]
\item $V\subset W$ whenever $V\in\cV_W$;
\item every parent satisfies
\[
 \Lambda_W^{-1}w_j\le a_j(W)\le\Lambda_Ww_j,
\qquad 1\le j\le4;
\]
\item the $Y(V)\subset V$ are measurable and
\[
 \lambda=\frac{\sum_{V\in\cV}\abs{Y(V)}}{\sum_{V\in\cV}\abs V}>0.
\]
\end{enumerate}
There is no comparability hypothesis on child axes, volumes, aspect ratios, or
orientations.  Set $h=w_1/100$, $W^\sharp=N_{w_1}(W)$, and
\[
 \Lfac=2+\log(2+\#\cV)+\log(2+\#\cW)+\log(2+w_4/w_1).
\]
For every $\eps>0$ there are $K_\eps\ge2$ and $A_\eps<\infty$ such that the
following holds.  For $W\in\cW_0$, form the frozen data above at radius $h$, put
\[
 p_W=\frac{\abs W}{\sum_{W'\in\cW_0}\abs{W'}},
 \qquad q_\eps=\frac{K_\eps}{K_\eps-1},
\]
and define
\begin{equation}\label{eq:intrinsic-main-profile}
 \mathfrak P_\eps^{\rm int}
 =
 \frac{\left(\sum_{W\in\cW_0}p_WD_W\right)^{K_\eps}}
 {\left(\sum_{W\in\cW_0}p_WD_W^{q_\eps}
       (\kappa_W^{(K_\eps)})^{-1/(K_\eps-1)}
  \right)^{K_\eps-1}}.
\end{equation}
Then there exist one refinement $Y'$, coarse shadings
$Z(W^\sharp)\subset W^\sharp$ for every $W\in\cW_0$, a nonempty final
positive-mass set
\[
 \cW_+=\left\{W\in\cW_0:
   M'_W:=\sum_{V\in\cV_W}\abs{Y'(V)}>0\right\},
\]
positive dyadic $a,b,c$, and $\sigma>0$, with $Z(W^\sharp)=\varnothing$ for
$W\notin\cW_+$, such that
\begin{align}
 \sum_V\abs{Y'(V)}
 &\gtrsim \Lfac^{-A_\eps}\sum_V\abs{Y(V)},
 \label{eq:main-mass}\\
 U'_W\subset Z(W^\sharp)
 &\subset N_{w_1}(U'_W)\cap W^\sharp
 \quad(W\in\cW_+),
 \label{eq:main-compat}\\
 \lambda(\cW_0^\sharp,Z)
 &\gtrsim_{\eps,\Lambda_W}
 \Lfac^{-A_\eps}\left(\frac{w_1}{w_4}\right)^\eps
 \mathfrak P_\eps^{\rm int}\lambda^{K_\eps},
 \label{eq:main-coarse-density}\\
 a\le m_{\cV_W,Y'}(x)&<2a
 \quad(x\in U'_W,\ W\in\cW_+),
 \label{eq:main-inner}\\
 c\le m_{\cV,Y'}(x)&<2c
 \quad(x\in U(\cV,Y')),
 \label{eq:main-global}\\
 b\le m_{\cW_0^\sharp,Z}(x)&<2b
 \quad(x\in U(\cW_0^\sharp,Z)).
 \label{eq:main-coarse-mult}
\end{align}
Here $U'_W=\bigcup_{V\in\cV_W}Y'(V)$.  For every $W\in\cW_+$,
\begin{equation}\label{eq:main-product}
 \mu(\cV,Y)\lesssim_{\eps,\Lambda_W}\Lfac^{A_\eps}
 \mu(\cW_0^\sharp,Z)\mu(\cV_W,Y').
\end{equation}
For every $x\in\R^4$,
\begin{equation}\label{eq:main-fibre}
 (\cV)_{Y'}(x)=
 \disjunion_{\substack{W\in\cW_0\\x\in Z(W^\sharp)}}
 (\cV_W)_{Y'}(x).
\end{equation}
Moreover,
\begin{align}
 \abs{U(\cV,Y')\cap B(x,w_1)}
 &\asymp\sigma
 \quad(x\in U(\cW_0^\sharp,Z)),
 \label{eq:main-local-uniformity}\\
 \Delta_{\max}(\cW_0^\sharp)
 &\lesssim_{\Lambda_W}\Delta_{\max}(\cW).
 \label{eq:main-kt}
\end{align}
\end{theorem}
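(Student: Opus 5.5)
The plan follows the architecture announced in the introduction: a local four-dimensional estimate inside each parent, then a single global cellular refinement, with the two glued by a weighted H\"older inequality.

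\emph{Local estimate.} Fix $W\in\cW_0$ and a refinement $Y''$ of the shading on $\cV_W$. The target bound is
\[
 \abs{N_h(U''_W)}\gtrsim_{\eps,\Lambda_W}\Lfac^{-A_\eps}(w_1/w_4)^{\eps}
 \,\kappa_W^{(K_\eps)}\,d_W(Y'')^{K_\eps}\abs W .
\]
The steps are as follows.
\begin{enumerate}[label=(\roman*)]
\item Thicken each $Y''(V)$ at radius $h$. Comparing $\abs{N_h(Y''(V))}$ with $r_W(V)\abs{Y''(V)}$ after a dyadic pigeonholing of the shading converts the density into the weighted density $d^{\rm wt}_{W,h}$.
\item Project along a shortest John direction of $W$. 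By (H2), $h$ is then comparable to the parent's short axis up to $\Lambda_W$, so each thickened body has volume $\asymp h$ times its projected volume.
\item Apply the indexed three-dimensional convex-union estimate (Section~\ref{sec:indexed}) to the projected family, whose relative Frostman constant is controlled by $C^{\rm wt}_{F,h}$.
\item Lift the result back to four dimensions by the layer-cake and Jensen argument, which keeps the exponent.
\end{enumerate}
Finally, \eqref{eq:weighted-unweighted-density} converts $d^{\rm wt}$ back to $d_W$. This conversion costs $(r_W^-/\overline r_W)^{K}$, which together with $1/C^{\rm wt}_{F,h}$ is exactly $\kappa_W^{(K)}$.

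\emph{Global refinement.} Partition $\R^4$ into half-open cells of side $\sim w_1$ and run successive dyadic pigeonholings. Each pigeonholing costs $\Lfac^{-O(1)}$ of mass and restricts every $Y(V)$ to a common union of cells. The pigeonholings are, in order:
\begin{enumerate}[label=(\alph*)]
\item the inner multiplicity $a$;
\item the global multiplicity $c$;
\item a null cleanup, so that pointwise membership agrees with positive-measure activity in a cell;
\item the number $b$ of active parents per cell;
\item the fine mass $\sigma$ per cell.
\end{enumerate}
This gives \eqref{eq:main-mass}, \eqref{eq:main-inner}, \eqref{eq:main-global} and \eqref{eq:main-local-uniformity}. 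Define $Z(W^\sharp)$ to be the union of $U'_W$ with the active cells of $W$, intersected with $W^\sharp$. This yields \eqref{eq:main-compat} and \eqref{eq:main-coarse-mult}, since cells containing points of $U'_W$ lie in $N_{w_1}(U'_W)$ because the cell diameter is at most $w_1$ up to a constant.

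Because all children are cut by the same union of cells, $x\in Y'(V)$ with $V\in\cV_W$ forces $x\in Z(W^\sharp)$. Together with the index-disjointness of $\cV$, this gives the fibre identity \eqref{eq:main-fibre}. The product inequality \eqref{eq:main-product} follows from \eqref{eq:main-fibre}, since $c\approx ab$ on the regularized set, combined with the mass loss bound. Finally, \eqref{eq:main-kt} holds because each $W^\sharp$ is contained in an $O(\Lambda_W)$ dilate of $W$ with comparable volume.

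\emph{Assembly.} Let $d_W=d_W(Y')$. Apply the local estimate to each $W\in\cW_+$, using $\abs{Z(W^\sharp)}\ge\abs{N_h(U'_W)}$ up to constants. Summing with $\abs{W^\sharp}\asymp\abs W$ gives
\[
 \lambda(\cW_0^\sharp,Z)\gtrsim \sum_W p_W\,\kappa_W\, d_W^{K}.
\]
The mass bound gives $\sum_W p_W D_W d_W\gtrsim\Lfac^{-A}\lambda\sum_W p_W D_W$. Write
\[
 p_WD_Wd_W=(p_W\kappa_W d_W^K)^{1/K}\,\bigl(p_W D_W^{q}\kappa_W^{-1/(K-1)}\bigr)^{(K-1)/K}.
\]
H\"older with exponents $K$ and $q$ then gives
\[
 \Bigl(\sum_W p_WD_Wd_W\Bigr)^K\le \Bigl(\sum_W p_W\kappa_Wd_W^K\Bigr)\Bigl(\sum_W p_WD_W^q\kappa_W^{-1/(K-1)}\Bigr)^{K-1},
\]
which produces $\mathfrak P^{\rm int}_\eps\lambda^{K_\eps}$ in \eqref{eq:main-coarse-density}. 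The profile sums are taken over $\cW_0$, and parents outside $\cW_+$ contribute zero to the left-hand sums, so the inequality only improves.

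\emph{Main obstacle.} I expect the indexed three-dimensional step to be hardest. Projected children can collide, so the essentially-distinct Wang--Zahl estimate must be upgraded to an indexed one. My first approach is to group indices by affine-adapted canonical boxes and pigeonhole the collision multiplicity on dyadic layers. On each layer, the loss of distinct body mass should be exactly compensated by the gain in indexed Katz--Tao density, with $\Lfac^{O(1)}$ losses from the number of layers.
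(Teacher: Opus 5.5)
Your proposal is correct and follows the paper's proof essentially step for step. It has the same local bound $\abs{N_h(U'_W)}\gtrsim\Lfac^{-A}(w_1/w_4)^{\eps}\kappa_W^{(K_\eps)}d_W^{K_\eps}\abs W$, obtained by thickening, shortest-direction projection, the indexed three-dimensional estimate and the layer-cake lift (Theorem~\ref{thm:r4-local}, Corollary~\ref{cor:r4-unweighted}); the same four selections with the global null cleanup; grid-neighbor covering; and the weighted H\"older assembly of Lemma~\ref{lem:profile-holder}.

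A few routine points need tightening:
\begin{enumerate}
\item No pigeonholing is needed in step (i), since $\abs{N_h(Y)}\gtrsim(\abs Y/\abs V)\abs{N_h(V)}$ holds for every measurable $Y\subset V$.
\item Take the cell side to be $h=w_1/100$, so the cell diameter is below $w_1$. This is needed for \eqref{eq:main-compat} and for the lower bound in \eqref{eq:main-local-uniformity}.
\item Before the cell-mass selection, discard cells carrying less than a $1/(2J)$ fraction of the current mass, where $J$ is the number of positive-mass cells. This loses less than half the mass and leaves only $O(\log J)\lesssim\Lfac$ dyadic levels. A naive pigeonhole over all cell-mass levels would not have $\Lfac$-controlled cost.
\item For \eqref{eq:main-product}, only $c<4ab$ is true and needed, not $c\approx ab$. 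Active parents in a cell need not contribute at a given point.
\end{enumerate}
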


\paragraph{Coarse normalization.}
The set $\cW_0$ is fixed by the input shading, whereas $\cW_+$ records the parents
that retain positive mass after the refinement.  The density denominator remains
$\sum_{W\in\cW_0}\abs{W^\sharp}$; parents in
$\cW_0\setminus\cW_+$ carry empty coarse shading but stay in that denominator.
Moreover, $Z(W^\sharp)$ is a union of selected cells, so a point may belong to a
coarse shading even when its fine fibre for that parent is empty.  The fibre identity
is formulated accordingly.

\begin{corollary}[Elementary $C$-Frostman profile]\label{cor:main-elementary}
In Theorem~\ref{thm:main}, suppose additionally that every
$\cV_W$, $W\in\cW_0$, is $C$-Frostman in $W$ for one $C\ge1$.  Define
\begin{equation}\label{eq:elementary-main-profile}
 \mathfrak P_\eps^{\rm elem}
 =
 \frac{\left(\sum_{W\in\cW_0}p_WD_W\right)^{K_\eps}}
 {\left(\sum_{W\in\cW_0}p_WD_W^{q_\eps}
       (\eta_W^{(K_\eps)})^{-1/(K_\eps-1)}
 \right)^{K_\eps-1}}.
\end{equation}
The same objects may be chosen so that
\begin{equation}\label{eq:elementary-main-density}
 \lambda(\cW_0^\sharp,Z)
 \gtrsim_{\eps,\Lambda_W}
 \Lfac^{-A_\eps}\left(\frac{w_1}{w_4}\right)^\eps
 C^{-1}\mathfrak P_\eps^{\rm elem}\lambda^{K_\eps}.
\end{equation}
\end{corollary}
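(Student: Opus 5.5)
Corollary~\ref{cor:main-elementary} will be deduced directly from Theorem~\ref{thm:main}. We keep the same refinement $Y'$, coarse shadings $Z$, dyadic parameters and $\sigma$, and only compare the two profiles. The key claim is the parentwise inequality
\[
 \kappa_W^{(K)}\ \ge\ C^{-1}\eta_W^{(K)}\qquad(W\in\cW_0,\ K>1).
\]
Comparing \eqref{eq:kappa-def} with \eqref{eq:eta-def}, this amounts to the weighted Frostman bound
\[
 C^{\rm wt}_{F,h}(\cV_W,W)\le C\,\frac{r_W^+}{\overline r_W}.
\]
To prove it, fix a convex body $K\subset W$. The numerator of \eqref{eq:weighted-frostman} is at most $r_W^+\abs K^{-1}\sum_{V\subset K}\abs V$, which by the $C$-Frostman hypothesis is at most $r_W^+ C\,\Delta(\cV_W,W)=r_W^+C\,T_W/\abs W$. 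The denominator equals $\abs W^{-1}\overline r_W T_W$ by the definition of $\overline r_W$. Taking the quotient gives $Cr_W^+/\overline r_W$, and the supremum over $K$ preserves the bound.

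Next we insert this into \eqref{eq:intrinsic-main-profile}. Since $(\kappa_W)^{-1/(K-1)}\le C^{1/(K-1)}(\eta_W)^{-1/(K-1)}$, the inner sum in the denominator is at most $C^{1/(K-1)}$ times the corresponding sum with $\eta_W$. Raising to the power $K-1$ costs exactly one factor of $C$. Hence
\[
 \mathfrak P_\eps^{\rm int}\ge C^{-1}\mathfrak P_\eps^{\rm elem},
\]
with the same $K_\eps$, $q_\eps$, $p_W$ and $D_W$. Substituting into \eqref{eq:main-coarse-density} yields \eqref{eq:elementary-main-density}. The implicit constant depends only on $\eps$ and $\Lambda_W$, as before.

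There is no real obstacle. The only point to check is that the hypothesis is applied with the correct normalization. Definition \eqref{eq:frostman-def} compares against $\Delta(\cV_W,W)=T_W/\abs W$, which is exactly the unweighted counterpart of the denominator in \eqref{eq:weighted-frostman}. The exponent bookkeeping $(C^{1/(K-1)})^{K-1}=C$ must also be correct, so that the loss is $C^{-1}$ and not some power of $C$ depending on $K_\eps$.
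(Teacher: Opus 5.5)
Your proposal is correct and matches the paper's proof. The paper obtains $\kappa_W^{(K_\eps)}\ge C^{-1}\eta_W^{(K_\eps)}$ from Lemma~\ref{lem:weighted-frostman-comparison}, whose proof is exactly your inline bound $C^{\rm wt}_{F,h}\le C\,r_W^+/\overline r_W$. It then substitutes this into the profile denominators to get $\mathfrak P_\eps^{\rm int}\ge C^{-1}\mathfrak P_\eps^{\rm elem}$ and invokes Theorem~\ref{thm:main} with the same refinement.
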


The indexed formulation permits arbitrary repetitions.  Their contributions remain
visible in the cardinalities entering $\Lfac$, in the multiplicities, and in the
displayed profile data, while the implicit constants require no separate bound on
duplicate multiplicity.  Likewise, the child minimum scales and child axis ratios
enter through the explicit inflation and profile quantities rather than through
$\Lfac$ or an implicit constant.

\section{A codimension-one density-response lift}\label{sec:lift}

The local four-dimensional estimate will be obtained from a three-dimensional union
bound after projection.  The following measure-theoretic lemma returns the projected
response to the original dimension.  It uses only layer cake and convexity of the
response function, so the density exponent is preserved without introducing a new
geometric loss.

\begin{theorem}[Density-response lift]\label{thm:lift}
Let $I\subset\R$ be an interval of length $H>0$, and let
$\cQ=(Q_i)_{i\in J}$ be a nonempty finite indexed family of measurable subsets of
$\R^d$ with
\[
 D:=\sum_{i\in J}\abs{Q_i}\in(0,\infty).
\]
Suppose that for every measurable $E_i\subset Q_i$ one has
\begin{equation}\label{eq:base-cu}
 \abs{\bigcup_iE_i}\ge A(\cQ)\,\Psi\left(
          \frac{\sum_i\abs{E_i}}{D}\right),
\end{equation}
where $A(\cQ)\ge0$ is finite and independent of the shadings,
$\Psi:[0,1]\to[0,\infty)$ is convex, and $\Psi(0)=0$.  Then, for every measurable
$S_i\subset I\times Q_i$,
\begin{equation}\label{eq:lift-conclusion}
 \abs{\bigcup_iS_i}\ge H A(\cQ)\,\Psi\left(
        \frac{\sum_i\abs{S_i}}{HD}\right).
\end{equation}
\end{theorem}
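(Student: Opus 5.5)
We slice along the $I$-direction and apply the hypothesis on each slice. For $t\in I$ and each $i$, we set
\[
 E_i(t)=\{y\in\R^d:(t,y)\in S_i\}\subset Q_i.
\]
By Fubini, for almost every $t$ the sets $E_i(t)$ are measurable, and
\[
 \abs{S_i}=\int_I\abs{E_i(t)}\,dt .
\]
The slice of $\bigcup_iS_i$ at height $t$ is exactly $\bigcup_iE_i(t)$, so Fubini also gives
\[
 \abs{\textstyle\bigcup_iS_i}=\int_I\abs{\textstyle\bigcup_iE_i(t)}\,dt.
\]
On the null set of bad $t$ we replace the slices by the empty set, which changes none of these integrals.

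Next we apply the hypothesis slice by slice. Put
\[
 f(t)=\frac{\sum_i\abs{E_i(t)}}{D}.
\]
Since $E_i(t)\subset Q_i$, we have $f(t)\in[0,1]$ and $f$ is measurable. The base estimate \eqref{eq:base-cu} then gives
\[
 \abs{\textstyle\bigcup_iE_i(t)}\ge A(\cQ)\Psi(f(t))
\]
for almost every $t$. Integrating over $I$ yields
\[
 \abs{\textstyle\bigcup_iS_i}\ge A(\cQ)\int_I\Psi(f(t))\,dt.
\]
Here $A(\cQ)$ comes out of the integral precisely because it does not depend on the shading.

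Finally we use Jensen's inequality for the probability measure $dt/H$ on $I$, with $\Psi$ convex:
\[
 \frac1H\int_I\Psi(f)\,dt\ge\Psi\Bigl(\frac1H\int_If\,dt\Bigr).
\]
The inner average is
\[
 \frac1H\int_If\,dt=\frac{\sum_i\abs{S_i}}{HD},
\]
which gives \eqref{eq:lift-conclusion}.

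The main obstacle is technical: making Jensen rigorous on the closed interval $[0,1]$. A convex function there may jump up at the endpoints, so it need not be continuous or even Borel-regular at $0$ and $1$. We would proceed as follows.
\begin{itemize}[label=--]
\item Convexity with $\Psi\ge0$ makes $\Psi$ bounded on $[0,1]$ by $\max(\Psi(0),\Psi(1))$, and continuous on the open interval.
\item $\Psi\circ f$ is then measurable, since $f^{-1}(\{0\})$ and $f^{-1}(\{1\})$ are measurable sets, and it is integrable because $\Psi$ is bounded.
\item Jensen then holds via a supporting line at the mean $m\in[0,1]$, namely $\Psi(s)\ge\Psi(m)+\ell(s-m)$. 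This needs a finite slope at $m$.
\item If $m\in(0,1)$, a finite supporting slope exists by convexity.
\item If $m=0$, then $f=0$ almost everywhere, and the claim reduces to $\Psi(0)=0\le$ anything.
\item If $m=1$, then $f=1$ almost everywhere, and equality holds trivially.
\end{itemize}
No geometric input beyond Fubini is needed, so the exponent carried by $\Psi$ is preserved unchanged.
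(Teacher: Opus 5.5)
Your proof is correct, but it slices in the opposite direction from the paper.

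\textbf{The paper's route.} It fixes a base point $z\in\R^d$ and takes the vertical fibre lengths $f_i(z)=\abs{(S_i)_z}$. It then uses the lossy bound $\abs{(\bigcup_iS_i)_z}\ge\max_if_i(z)$, and applies layer cake to write $\int\max_if_i=\int_0^H\abs{\bigcup_i\{f_i>t\}}\,dt$. The hypothesis is therefore applied to the superlevel sets $\{f_i>t\}$. In effect, the paper first replaces each $S_i$ by its vertical rearrangement $\{(s,z):s-\inf I<f_i(z)\}$. This rearrangement has the same mass, a union that is no larger, and horizontal sections that are exactly $\{f_i>t\}$.

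\textbf{Your route.} You fix the height $t\in I$ and use the exact identity that the section of $\bigcup_iS_i$ is the union of the sections. Fubini then holds with equality, so both the max inequality and the rearrangement disappear. Since $S_i\subset I\times Q_i$ exactly, your sections are genuine subsets of $Q_i$, and no adjustment of representatives is needed. Both arguments arrive at $A(\cQ)\int\Psi(\cdot)$ of a density whose mean is $\sum_i\abs{S_i}/(HD)$, and both close with the same Jensen step.

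\textbf{Comparison.} The paper's route gives a density $\rho(t)$ that is monotone in $t$, coming from nested sets. It also lets Proposition~\ref{prop:lift-product} be read off as equality in the fibre and Jensen steps. Your route is shorter and equally sharp on products $I\times E_i$: Fubini is always an equality there, and Jensen is an equality because $f$ is constant. Your handling of Jensen at the endpoints of $[0,1]$, where a convex $\Psi$ may jump upward, is correct. It covers measurability via the preimages of $\{0\}$ and $\{1\}$, boundedness, a supporting line at an interior mean, and the degenerate means $0$ and $1$. The paper passes over this point with the phrase ``normalized Jensen.''
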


\begin{proof}
For almost every $z\in\R^d$, define the one-dimensional fibre and its length by
\[
 (S_i)_z=\{s\in I:(s,z)\in S_i\},
 \qquad f_i(z)=\abs{(S_i)_z}.
\]
Tonelli's theorem gives measurable representatives with $0\le f_i\le H$, with
$f_i=0$ almost everywhere off $Q_i$, and
\begin{equation}\label{eq:fibre-integral}
 \int_{\R^d}f_i(z)\,dz=\abs{S_i}.
\end{equation}
The fibre of the finite union contains each individual fibre, so
\[
 \abs{(\bigcup_iS_i)_z}\ge\max_i f_i(z)
\]
for almost every $z$.  Put
\[
 E_i(t)=\{z:f_i(z)>t\},\qquad
 \rho(t)=\frac{\sum_i\abs{E_i(t)}}D,\qquad 0<t<H.
\]
The sets $E_i(t)$ are measurable subsets of $Q_i$, and layer cake gives
\begin{align}
 \abs{\bigcup_iS_i}
 &\ge\int_{\R^d}\max_i f_i(z)\,dz \\
 &=\int_0^H\abs{\bigcup_iE_i(t)}\,dt
 \ge A(\cQ)\int_0^H\Psi(\rho(t))\,dt.
 \label{eq:lift-layer}
\end{align}
Normalized Jensen on $[0,H]$ and a second layer-cake computation yield
\begin{align*}
 \int_0^H\Psi(\rho(t))\,dt
 &\ge H\Psi\left(\frac1H\int_0^H\rho(t)\,dt\right),\\
 \int_0^H\rho(t)\,dt
 &=\frac1D\sum_i\int_{\R^d}f_i(z)\,dz
 =\frac1D\sum_i\abs{S_i}.
\end{align*}
Substitution in \eqref{eq:lift-layer} proves \eqref{eq:lift-conclusion}.  The
normalization uses the fixed body mass $HD$ rather than the shaded mass.  Consequently
the same argument includes the zero-density case, where $\Psi(0)=0$.
\end{proof}

Repeated indices pass through the lift unchanged: the union and fibre maximum are
geometric, whereas $D$ and the density numerator retain the indexed sums.  The
lower-dimensional estimate must therefore already be valid for the indexed family.
Section~\ref{sec:indexed} establishes precisely that input before the lift is applied.

\begin{proposition}[Product sharpness]\label{prop:lift-product}
Fix measurable $E_i\subset Q_i$ and take $S_i=I\times E_i$.  Then every inequality
in the fibre and Jensen part of the proof of Theorem~\ref{thm:lift} is an equality.
Consequently, neither the factor $H$ nor the response at the mean density can be
improved uniformly by the lifting argument alone.
\end{proposition}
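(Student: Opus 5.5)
The plan is to substitute the product shadings $S_i=I\times E_i$ into each step of the proof of Theorem~\ref{thm:lift} and check that no inequality is strict. First compute the fibres. For $z\in E_i$ the fibre is $(S_i)_z=I$, and otherwise it is empty, so $f_i=H\ind_{E_i}$. The fibre of the union at $z$ is therefore $I$ whenever $z\in\bigcup_iE_i$, and empty otherwise. Hence
\[
\abs{(\bigcup_iS_i)_z}=H\,\ind_{\bigcup_i E_i}(z)=\max_if_i(z),
\]
so the fibre inequality is an equality pointwise. Integrating gives $\abs{\bigcup_iS_i}=\int\max_if_i=H\abs{\bigcup_iE_i}$, which is the first step of \eqref{eq:lift-layer}.

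Next, the level sets do not depend on $t$. For $0<t<H$ we have $E_i(t)=\{f_i>t\}=E_i$, so $\rho(t)=\sum_i\abs{E_i}/D=:\rho_0$ is constant on $(0,H)$. The layer-cake identity is exact in any case. Jensen's inequality for the constant function $\rho$ is an equality:
\[
\int_0^H\Psi(\rho)\,dt=H\Psi(\rho_0)=H\Psi\Bigl(\frac1H\int_0^H\rho\Bigr).
\]
The mean density satisfies $\sum_i\abs{S_i}/(HD)=H\sum_i\abs{E_i}/(HD)=\rho_0$. So apart from the application of the base hypothesis \eqref{eq:base-cu}, every step is an equality. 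The output bound $HA(\cQ)\Psi(\rho_0)$ is exactly $H$ times the input bound $A(\cQ)\Psi(\rho_0)$ evaluated at the same density. That last step is the only inequality, and it is inherited verbatim from the base estimate rather than introduced by the lift.

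For the consequence, I would argue by contradiction. Suppose the lifting argument alone produced a uniform improvement, say a factor $cH$ with $c>1$, or a response $\Phi\ge\Psi$ that strictly exceeds $\Psi$ at some density. Consider any configuration in which the base estimate \eqref{eq:base-cu} is attained or nearly attained; the lemma has to cover the case where $A(\cQ)\Psi$ is the sharp base bound. Taking product shadings over it gives $\abs{\bigcup S_i}=H\abs{\bigcup E_i}$, which is then (nearly) $HA\Psi(\rho_0)$. The improved bound would exceed this, a contradiction. Put differently, the lift maps sharp base examples to sharp lifted examples, so improving the factor $H$ or the mean-density response would require an improvement of the base input itself.

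The main obstacle is stating "cannot be improved" precisely. I would phrase it as follows: any bound derived only from \eqref{eq:base-cu} through this argument must be consistent with product examples. For near-extremizers I would use a limiting argument (a sequence $E_i^{(n)}$ with ratio tending to one) instead of assuming exact attainment.
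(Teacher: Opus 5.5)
Your proposal is correct and follows the paper's proof essentially verbatim. The three equality checks match: $f_i=H\ind_{E_i}$ makes the fibre comparison an equality, and $E_i(t)=E_i$ makes $\rho$ constant so Jensen is an equality. The consequence is also the paper's: testing an improved lift on $S_i=I\times E_i$ gives the same improvement of \eqref{eq:base-cu}. The paper states that last step as a direct implication, so your contradiction and near-extremizer framing is not needed, though it does no harm.
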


\begin{proof}
For almost every $0<t<H$, one has $f_i=H\ind_{E_i}$ and
$E_i(t)=E_i$.  Therefore
\[
 \abs{\bigcup_iS_i}=H\abs{\bigcup_iE_i},
 \qquad \rho(t)=\frac{\sum_i\abs{E_i}}D.
\]
The fibre comparison and Jensen are both equalities.  Any uniform improvement in
\eqref{eq:lift-conclusion} would therefore give the same improvement in
\eqref{eq:base-cu}.
\end{proof}

\begin{remark}[Nonconvex responses]
Without convexity, \eqref{eq:lift-layer} remains valid, but the value of $\Psi$ at
the mean need not be controlled.  If $\co(\Psi)$ denotes the lower convex envelope,
the automatic conclusion is
\[
 \abs{\bigcup_iS_i}\ge H A(\cQ)\,\co(\Psi)\left(
        \frac{\sum_i\abs{S_i}}{HD}\right).
\]
In the application below, $\Psi(t)=t^{K_\eps}$ with $K_\eps\ge1$, so the response
is convex and its exponent is preserved exactly.
\end{remark}

\section{Indexed convex unions in three dimensions}\label{sec:indexed}

Projection creates the first indexing difficulty in the local argument.  The
three-dimensional convex-union theorem is formulated for essentially-distinct
bodies, while two different four-dimensional child indices may have the same
projection.  Passing to a geometric set of representatives would discard precisely
the multiplicities that the final factorization must retain.  We instead derive the
localized essentially-distinct estimate needed here, organize coincident projections
by affine-adapted canonical boxes, and recover the indexed normalization through a
collision-layer cancellation.

\subsection{The essentially-distinct input}

Two measurable sets $Q,Q'\subset\R^3$ are \emph{essentially distinct} if
\begin{equation}\label{eq:essentially-distinct}
 \abs{Q\cap Q'}\le\frac12\max(\abs Q,\abs{Q'}).
\end{equation}
This is the convention of Wang and Zahl
\cite{wangzahl2025convexunions}.  We begin with their set-valued hypothesis; the
indexed extension is proved later in this section.

\begin{proposition}[Localized essentially-distinct union estimate]
\label{prop:ed-cu3}
Let $0<q_1\le q_2\le q_3$, and let $\cQ$ be a nonempty essentially-distinct family
of convex bodies contained in a ball of radius $O(q_3)$.  Suppose their ordered John
semiaxes are comparable, with fixed constants, to $q_1,q_2,q_3$.  For measurable
$E(Q)\subset Q$, put
\[
 \rho=\frac{\sum_{Q\in\cQ}\abs{E(Q)}}{\sum_{Q\in\cQ}\abs Q},
 \qquad m=\Delta_{\max}(\cQ).
\]
For every $\eps>0$ there is $K_\eps\ge1$ such that
\begin{equation}\label{eq:ed-cu3}
 \abs{\bigcup_{Q\in\cQ}E(Q)}
 \gtrsim_\eps
 \left(\frac{q_1}{q_3}\right)^\eps
 \rho^{K_\eps}m^{-1}\sum_{Q\in\cQ}\abs Q.
\end{equation}
The constants depend on the fixed axis-comparability constants and on the fixed
constant hidden in the ambient radius $O(q_3)$.
\end{proposition}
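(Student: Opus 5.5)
We will not prove \eqref{eq:ed-cu3} from scratch. Instead we reduce it, by rescaling and localization, to the convex-union theorem of Wang and Zahl \cite{wangzahl2025convexunions}; the appendix on the external input records the precise form of their theorem being used. That theorem concerns an essentially-distinct family of convex sets in $\R^3$ satisfying a Katz--Tao convex Wolff axiom. In its normalized form it gives, for shadings of density $\rho$,
\[
 \Bigl|\bigcup_Q E(Q)\Bigr|\gtrsim_\eps \delta^{\eps}\rho^{K_\eps}m^{-1}\sum_Q\abs Q ,
\]
where $\delta$ is the ratio of the smallest to the largest scale and $m$ is the Katz--Tao convex Wolff constant. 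Our task is to match this to the present hypotheses.

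First, normalize. Apply the dilation $x\mapsto x/q_3$. All quantities in \eqref{eq:ed-cu3} transform consistently: both sides scale by $q_3^{-3}$, while $\rho$ and $m=\Delta_{\max}(\cQ)$ are scale-invariant, since $\Delta$ is a ratio of volumes. After this step the bodies lie in a ball of radius $O(1)$ and have John semiaxes comparable to $(q_1/q_3,\,q_2/q_3,\,1)$.

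Second, identify the parameters.
\begin{enumerate}
\item Set $\delta=q_1/q_3$, so that the loss $\delta^\eps$ becomes $(q_1/q_3)^\eps$.
\item Essential distinctness in the sense of \eqref{eq:essentially-distinct} is exactly the Wang--Zahl convention, so it transfers without change.
\item The Wolff-axiom constant, defined as the supremum over convex $K$ of $\#\{Q\subset K\}\,\abs Q/\abs K$, coincides with $\Delta_{\max}(\cQ)$ up to constants. This uses the comparability of the semiaxes, which makes all $\abs Q$ comparable to $q_1q_2q_3$.
\end{enumerate}
The comparability constants and the $O(q_3)$ ambient radius enter only through these identifications. They are absorbed into $\gtrsim_\eps$, because a convex set of diameter $O(1)$ can be covered by $O(1)$ unit-scale pieces.

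Third, compare normalizations. If the external theorem uses a uniform shading of density $\ge\lambda$ per body rather than the average density $\rho$, pass to the average by a standard popularity step. Discard the bodies with $\abs{E(Q)}<\rho\abs Q/2$; the remaining bodies carry at least half of $\sum_Q\abs{E(Q)}$. Dyadically pigeonhole the remaining densities $\abs{E(Q)}/\abs Q\in[\rho/2,1]$. Each dyadic level $2^{-j}$ with $2^{-j}\gtrsim\rho$ contributes its share of the mass, and since there are $O(\log(1/\rho))$ levels, some level $2^{-j}$ holds a subfamily $\cQ'$ with
\[
 \#\cQ'\,2^{-j}\gtrsim \rho\,\#\cQ/\log(2/\rho).
\]
Apply the theorem to $\cQ'$, using $\Delta_{\max}(\cQ')\le m$. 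Taking the exponent for $2^{-j}$ at least $1$ gives the bound
\[
 (2^{-j})^{K}\#\cQ'\gtrsim \rho^{K+1}\#\cQ/\log(2/\rho).
\]
The logarithm is absorbed by $\rho^{-1}$. Replacing $K_\eps$ by $K_\eps+2$ then yields \eqref{eq:ed-cu3}.

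The main obstacle is making sure the cited theorem genuinely applies at the stated generality. Two points need care. One is that the Katz--Tao constant in Wang--Zahl is taken over all convex $K$, whereas $\Delta(\cQ,K)$ counts only bodies contained in $K$; these should agree up to a constant by definition. The other is that the family may be far from maximal, so the $m^{-1}$ normalization must hold for arbitrary $m\ge1$, and also for $m<1$ if needed. For the latter, we would first try to check the statement in the appendix and, if necessary, pad the family or rescale $K$ to reach the regime covered there.
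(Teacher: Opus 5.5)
\textbf{There is a genuine gap.} The ``normalized form'' you attribute to Wang--Zahl is, up to notation, the proposition itself: an all-density bound with a pure $\delta^\eps$ loss and response $\rho^{K_\eps}$. Citing it therefore proves nothing. It is also not what the source supplies in the form the paper uses.

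After scaling by $q_3^{-1}$, write $a=q_1/q_3$, $b=q_2/q_3$, $N=\#\cQ$, $v\asymp ab$, and take $\eps\le1$ without loss. What is actually available is the convex assertion $F(t,t)$ at positive parameters. It is reached from their Theorem~1.9 ($D(0,0)$) through Definition~1.5, Proposition~1.6 and Proposition~5.14. Your argument never confronts two of its features:
\begin{enumerate}
\item Besides the loss $a^{\eps/2}$, it carries the factors $b^t\bigl(m^{-3/2}\ell Nv^{1/2}\bigr)^{-t}\mathcal D(\cQ)^{-t}$. Here $\ell$ is a slab Frostman constant and $\mathcal D$ a geometric concentration quantity.
\item It applies only to families of density $\rho\ge a^{\eta}$, with $\eta=\eta_{\mathrm{WZ}}(\eps/2;t,t)$.
\end{enumerate}

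The missing work is to absorb those factors and then remove the threshold. Localization does the first job, but not through covering by unit pieces:
\begin{itemize}
\item testing $\Delta$ on the ambient ball gives $Nv\lesssim m$;
\item testing on a member gives $m\ge1$, so your concern about $m<1$ disappears and no padding is needed;
\item a slab containing an $a\times b\times1$ body has volume $\gtrsim a$, so $\ell\lesssim a^{-1}$;
\item subfamily monotonicity with Remark~5.6, Situation~2, gives $\mathcal D\lesssim m^{1/2}$.
\end{itemize}
With $B=m^{-3/2}\ell Nv^{1/2}$ this yields $B\mathcal D\lesssim m^{-1}\ell Nv^{1/2}\lesssim\ell v^{-1/2}\lesssim a^{-2}$. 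Choosing $t=\eps/10$, the total loss is $a^{\eps/2+3t}\ge a^\eps$. Since $\rho^{K}\le1$, this proves \eqref{eq:ed-cu3} when $\rho\ge a^\eta$.

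For $0<\rho<a^\eta$ a separate argument is needed. The largest shading has measure $\gtrsim\rho v\gtrsim\rho a^2$. Because $m^{-1}Nv\lesssim1$, any $K_\eps\ge1+2/\eta$ gives $\rho a^2\gtrsim a^\eps\rho^{K_\eps}m^{-1}Nv$. This is where the exponent $K_\eps$ actually comes from.

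Your popularity step cannot replace this. The source already uses the average-mass normalization, so that step addresses a non-issue. Meanwhile, pigeonholing to a uniform level $2^{-j}\ge\rho/2$ still leaves you below $a^\eta$ whenever $\rho$ is small, and there no version of the source theorem applies.
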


\begin{proof}
It suffices to prove the claim for $0<\eps\le1$, since for
$0<a\le1$ the bound with exponent $\min\{\eps,1\}$ implies the bound with exponent
$\eps$. After an affine scaling of all lengths by $q_3^{-1}$, write
$a=q_1/q_3$, $b=q_2/q_3$, and let $N=\#\cQ$ and
$v\asymp ab$ be the common volume.  We use the positive-parameter assertion
$F(t,t)$ for $0<t\le2/3$.  In the source notation, it is obtained as follows.
Their Theorem~1.9 gives $D(0,0)$; at $\sigma=\omega=t$, Definition~1.5 makes
$D(t,t)$ weaker by the factor $(\#\mathcal T)^{-t}\le1$.
Proposition~1.6 gives $E(t,t)$, and Proposition~5.14 gives $F(t,t)$
\cite{wangzahl2025convexunions}. In the source notation it gives, for families of
density at least $a^{\eta}$, where
$\eta:=\eta_{\mathrm{WZ}}(\eps/2;t,t)>0$,
\begin{equation}\label{eq:wz-positive}
 \abs{\bigcup_QE(Q)}\gtrsim
 a^{\eps/2}b^t m^{-1}Nv
 \bigl(m^{-3/2}\ell Nv^{1/2}\bigr)^{-t}
 \mathcal D(\cQ)^{-t},
\end{equation}
where $\ell$ is the source's Frostman slab constant and $\mathcal D$ is its
geometric concentration quantity.

The localization gives $Nv\lesssim m$, while testing $m$ on a member of the family
gives $m\gtrsim1$.  A slab containing an $a\times b\times1$ John-normalized body
has thickness and volume $\gtrsim a$, and hence $\ell\lesssim a^{-1}$.
For every $Q$, the localized subfamily $\cQ[N_b(Q)]$ has Katz--Tao constant at
most $m$ by subfamily monotonicity.  Remark~5.6, Situation~2, therefore gives
$\mathcal D(\cQ)\lesssim m^{1/2}$.  Consequently, with
$B:=m^{-3/2}\ell Nv^{1/2}$,
\[
 B\mathcal D(\cQ)
 \lesssim m^{-1}\ell Nv^{1/2}
 \lesssim \ell v^{-1/2}
 \lesssim a^{-2}.
\]
Choose $t=\eps/10$.  Since $b\ge a$,
\[
 b^tB^{-t}\mathcal D(\cQ)^{-t}
 =b^t\bigl(B\mathcal D(\cQ)\bigr)^{-t}
 \gtrsim b^ta^{2t}\ge a^{3t}.
\]
For $0<a\le1$, the exponent $\eps/2+3\eps/10=4\eps/5$ is smaller than $\eps$;
thus \eqref{eq:wz-positive} is at least the right side of
\eqref{eq:ed-cu3} whenever $\rho\ge a^{\eta}$, because
$\rho^{K}\le1$.

It remains to remove the source density threshold.  If
$0<\rho<a^{\eta}$, one shading has measure $\gtrsim\rho v$, and
$v\asymp ab\ge a^2$.  Since $m^{-1}Nv\lesssim1$, any
\[
 K_\eps\ge1+\frac{2}{\eta}
\]
satisfies
\[
 \abs{\bigcup_QE(Q)}\gtrsim\rho a^2
 \gtrsim a^\eps\rho^{K_\eps}m^{-1}Nv.
\]
The zero-density case is immediate, completing the all-density localized estimate.
The source's ``outer John ellipsoid'' is used with the fixed-dilate convention of
Section~\ref{sec:prelim}, which changes only the fixed comparability constants.
\end{proof}

Thus Proposition~\ref{prop:ed-cu3} is the localized all-density consequence needed
below.  Its reduction keeps the positive source parameters explicit and supplies the
remaining small-density range by the largest-shading argument.  Appendix
\ref{app:external} records the full notation dictionary for this derivation.

\subsection{Affine-adapted canonical boxes}

A Euclidean net in all Euler angles does not give aspect-ratio-uniform collision
control.  For example,
boxes of shape $\delta\times\delta\times1$ with a common center and long axis, rotated
in roll increments of size $\delta$, form a clique of size comparable to
$\delta^{-1}$ in the non-essential-distinctness graph.  The parameter metric must
therefore measure the relative affine transform at the scale of the box itself.

We first isolate the parameter-space fact used in the discretization. Put
$C=[-1,1]^3$ and, for a fixed positive diagonal matrix $D$, write
\[
 B(c,U)=c+UDC,\qquad c\in\R^3,\quad U\in SO(3).
\]
Let
\[
 H_D=\{S\in SO(3):SDC=DC\}.
\]
This is the orientation-preserving signed-permutation symmetry group of the
rectangular box, and $\#H_D\le24$. For two frame representations define
\[
 A=D^{-1}U^TU'D,\qquad b=D^{-1}U^T(c'-c),
\]
Here $\abs{\cdot}$ is the Euclidean norm on vectors and $\norm{\cdot}$ is its induced
operator norm on matrices.  Both norms are fixed before $D$ is chosen, so all
dependence on the affine shape appears explicitly through the conjugation by $D$.
Set
\begin{equation}\label{eq:parameter-gauge}
 d_0((c,U),(c',U'))=
 \max\{\norm{A-I},\norm{A^{-1}-I},\abs b,\abs{A^{-1}b}\}.
\end{equation}
Minimize \eqref{eq:parameter-gauge} over the left and right actions of $H_D$ and
denote the resulting symmetric gauge on geometric boxes by $d_D$.

\begin{lemma}[Local affine parameter packing]\label{lem:parameter-packing}
There are dimensional constants $\eta_*>0$ and $C<\infty$, independent of the
condition number of $D$, with the following properties.
\begin{enumerate}[label=\textup{(\roman*)}]
\item The gauge $d_D$ is independent of the frame representations of the two
geometric boxes. If $d_D(B,B')\le\eta\le\eta_*$, then, after choosing representatives
realizing the minimum,
\[
 B'\subset(1+C\eta)B,
 \qquad B\subset(1+C\eta)B',
\]
where the dilations are about the respective centers.
\item If $LB$ denotes dilation about the center by $L\ge1$, then
\[
 d_{LD}(LB,LB')\ge L^{-1}d_D(B,B').
\]
\item Fix $B_0$ of shape $D$ and $0<\zeta\le1$. If boxes $B_\beta$ of the same shape satisfy
$d_D(B_\beta,B_\gamma)>\zeta$ for $\beta\ne\gamma$ and
\[
 \abs{B_0\cap B_\beta}>\tfrac12\abs{B_0},
\]
then their number is $O(\zeta^{-12})$, with a dimensional implicit constant.
\end{enumerate}
\end{lemma}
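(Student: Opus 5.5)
The plan is to normalize everything by the affine map $x\mapsto D^{-1}U^T(x-c)$, which sends $B=B(c,U)$ to the cube $C$. Under this map $B'$ becomes $A C+b$, with $A=D^{-1}U^TU'D$ and $b=D^{-1}U^T(c'-c)$. Thus $d_0$ only measures how far the normalized image of $B'$ is from $C$, and the result is automatically free of the condition number of $D$.

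\emph{Part (i).} Replacing $U$ by $US$ with $S\in H_D$ changes $A$ to $D^{-1}S^TD\,A$ and $b$ to $D^{-1}S^TD\,b$. Since $S$ is a signed permutation preserving $DC$, the matrix $D^{-1}S^TD$ maps $C$ onto $C$ and preserves its set of vertices. I expect it to be a signed permutation whose permuted entries are equal, so that it is orthogonal in the fixed Euclidean norm. Hence minimizing over the $H_D$ actions gives an invariant of the pair of geometric boxes. The symmetry of $d_0$ under swapping $(c,U)$ and $(c',U')$ comes from $A\mapsto A^{-1}$ and $b\mapsto -A^{-1}b$, which is why both $A^{-1}$ and $A^{-1}b$ appear in the definition.

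For the inclusion, take $\norm{A-I}\le\eta$ and $\abs b\le\eta$. Then every $y\in C$ satisfies $\abs{Ay+b-y}\le\sqrt3\,\eta+\eta$. So the normalized $B'$ lies in $C+O(\eta)$ Euclidean balls, which is contained in $(1+C\eta)C$ because $C$ contains the unit ball. Pulling back gives $B'\subset(1+C\eta)B$, and the symmetric terms give the reverse inclusion.

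\emph{Part (ii).} Dilation replaces $D$ by $LD$ and $c'-c$ by $L$-scaled quantities. The matrix $A$ is unchanged under conjugation by $LD$, and the translation becomes $(LD)^{-1}U^T(c'-c)$. If dilation is about each box's own center, the offset $c'-c$ is fixed and $b$ becomes $b/L$. So each entry of $d_0$ is at least $L^{-1}$ times its old value (the $A$ terms are unchanged and $L\ge1$). Minimizing over $H_{LD}=H_D$ gives the claim.

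\emph{Part (iii).} This is the main work and the main obstacle. Normalize by $B_0$, so $B_0=C$ and each $B_\beta$ becomes $A_\beta C+b_\beta$, where $A_\beta$ lies in the group $G_D=D^{-1}SO(3)D$.

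First I will show a compactness bound: $\abs{C\cap(A_\beta C+b_\beta)}>\frac12\abs C$ forces $\norm{A_\beta}$, $\norm{A_\beta^{-1}}$ and $\abs{b_\beta}$ to be $O(1)$ after a suitable $H_D$ representative. The volume is preserved, and a set of volume $>4$ inside $C$ that is an affine image of $C$ cannot be very elongated. I would use John's theorem on the convex body $C\cap(A C+b)$: it contains a ball of radius $\gtrsim1$, which bounds $\norm{A^{-1}}$, and volume then bounds $\norm A$.

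The parameters $(A_\beta,b_\beta)$ therefore lie in a compact subset of the $6$-dimensional manifold $G_D\times\R^3$. The gauge is comparable to a left-invariant metric there. Left translation by the normalization of $B_\beta$ turns $d_0(B_\beta,B_\gamma)$ into $\norm{A_\beta^{-1}A_\gamma-I}$ and similar terms, and the operator norms are uniformly controlled on the compact set.

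The difficulty is that $G_D$ is a conjugate of $SO(3)$ whose geometry depends on $D$. To stay uniform I would avoid charts. Instead, I would cover the bounded matrix set $\{\norm{A},\norm{A^{-1}}\le C_0\}\subset\R^{9}$ together with $\abs b\le C_0$ in $\R^3$ by $O(\zeta^{-12})$ Euclidean cells of size $c\zeta$; this ambient-dimension count in $9+3$ variables explains the exponent $12$. Two boxes in the same cell have $\norm{A_\beta-A_\gamma}\lesssim\zeta$. The uniform bounds on the inverses then give $\norm{A_\beta^{-1}A_\gamma-I}\lesssim\zeta$ and similar bounds, so $d_D\le\zeta$ once $c$ is small. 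This contradicts separation, and pigeonholing gives the bound.
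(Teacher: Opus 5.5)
Your proposal is correct and follows the paper's proof essentially step for step: the same normalized parameters $(A,b)$, with the swap rule $(A,b)\mapsto(A^{-1},-A^{-1}b)$ for (i); the observation that $A$ is unchanged while $b\mapsto b/L$ for (ii); and, for (iii), a compactness bound for the neighbors in $\R^{12}$, followed by $d_0\lesssim\abs{b_\beta-b_\gamma}+\norm{A_\beta-A_\gamma}$ and Euclidean packing. The only variation is the compactness step: you bound $\norm{A^{-1}}$ through an inscribed ball of radius $\gtrsim1$ in $C\cap(AC+b)$ (via John) and then bound $\norm A$ using $\det A=1$, whereas the paper bounds the rows of $A$ by slicing slabs and recovers $A^{-1}$ from the cofactor formula; both routes work.
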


\begin{proof}
All frame representations of one chosen geometric box differ by right multiplication
by an element of $H_D$. Such an element is a signed permutation among equal diagonal
entries of $D$, so it commutes with $D$. Consequently a change of the two
representatives sends $(b,A)$ to one of the declared finite left/right transforms.
The minimum is therefore a function of the geometric boxes. Swapping the boxes sends
$(b,A)$ to $(-A^{-1}b,A^{-1})$, so the four terms in
\eqref{eq:parameter-gauge} also prove symmetry.

In the coordinates of the first box, the second is $b+AC$. If the gauge is at most
$\eta$, choose minimizing representatives. For $x\in C$,
\[
 b+Ax=x+\bigl(b+(A-I)x\bigr)\in(1+C\eta)C.
\]
The two inverse terms give the reverse containment, establishing (i). Under common
dilation, $A$ is unchanged and $b$ is replaced by $b/L$. For each fixed pair of
symmetry representatives, the maximum in \eqref{eq:parameter-gauge} decreases by at
most the factor $L$; taking the finite minimum proves (ii).

For (iii), normalize $B_0$ affinely to $C$. A neighbor has the form $b+AC$ with
$\det A=1$. The set
\[
 \Omega=\{y\in C:b+Ay\in C\}
\]
has measure greater than $\abs C/2$. If $a_r$ is a row of $A$, then
$\Omega$ lies in the slab $\{y\in C:\abs{b_r+a_r\cdot y}\le1\}$. Slicing in a
coordinate on which $a_r$ has largest absolute component shows that the volume of
this slab inside $C$ is $O(1/\norm{a_r}_\infty)$. Thus every row of $A$ is bounded.
Nonempty intersection gives $b=x-Ay$ for some $x,y\in C$, and hence bounds $b$.
Since $\det A=1$, the cofactor formula bounds $A^{-1}$ as well. All neighboring
parameters $(b,A)$ therefore lie in one fixed compact subset of $\R^{12}$.

Choose for each neighbor a representative in this central chart. For two such
parameters,
\[
 A_{\beta\gamma}=A_\beta^{-1}A_\gamma,
 \qquad b_{\beta\gamma}=A_\beta^{-1}(b_\gamma-b_\beta).
\]
On the fixed compact set, the identity-representative value of $d_0$ is at most a
dimensional constant times
$\abs{b_\beta-b_\gamma}+\norm{A_\beta-A_\gamma}$. Since $d_D$ is the minimum over
representatives, $d_D(B_\beta,B_\gamma)>\zeta$ forces ordinary Euclidean separation
$\gtrsim\zeta$ of these parameters. Euclidean packing in the compact subset of
$\R^{12}$ now gives $O(\zeta^{-12})$ neighbors.
\end{proof}

\begin{corollary}[Condition-number-independent collision degree]
\label{cor:condition-independent-degree}
Let $D$ be any positive diagonal matrix and let $0<\zeta\le1$. If a family of
shape-$D$ boxes is $\zeta$-separated in $d_D$, then the graph joining two boxes whose
intersection has more than half the volume of either box has maximum degree
$O(\zeta^{-12})$. The implicit constant is dimensional and is independent of
$\norm D\norm{D^{-1}}$.
\end{corollary}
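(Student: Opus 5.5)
The corollary should follow directly from Lemma~\ref{lem:parameter-packing}(iii), applied with the box at which the degree is measured as the base box $B_0$. Fix a vertex $B_0$ of the graph, that is, a member of the $\zeta$-separated shape-$D$ family. Every neighbour $B_\beta$ satisfies
\[
 \abs{B_0\cap B_\beta}>\tfrac12\max(\abs{B_0},\abs{B_\beta}).
\]
All members share the shape $D$, so $\abs{B_\beta}=\abs{B_0}$, and each neighbour therefore satisfies $\abs{B_0\cap B_\beta}>\tfrac12\abs{B_0}$. This is the overlap hypothesis of part (iii). The statement is phrased as ``more than half the volume of either box''; whichever reading is intended, the equal volumes make it equivalent to the hypothesis of (iii).

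The neighbours are pairwise $\zeta$-separated in $d_D$, since the whole family is. The collection of neighbours of $B_0$ thus satisfies every hypothesis of Lemma~\ref{lem:parameter-packing}(iii). That lemma bounds their number by $O(\zeta^{-12})$ with a dimensional constant. Taking the maximum over all vertices $B_0$ gives the degree bound.

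The only point that needs checking is that the constant is independent of the condition number $\norm D\norm{D^{-1}}$. I would verify this by recalling where the normalization enters the proof of (iii). There one maps $B_0$ affinely to the cube $C$. Because $d_0$ is defined through the conjugated quantities $A=D^{-1}U^TU'D$ and $b=D^{-1}U^T(c'-c)$, the gauge is invariant under this normalization. After it, all parameters lie in a compact set whose size depends only on dimension, via the slab-volume and cofactor bounds. Hence no quantity depending on $D$ survives, and the Euclidean packing in $\R^{12}$ is uniform in $D$.

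I do not expect a real obstacle, since the content is already in the lemma. The step to state carefully is this last independence claim: both norms were fixed before $D$, so all of the $D$-dependence is absorbed into the affine normalization.
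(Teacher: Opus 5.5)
Your proposal is correct and is the paper's argument: fix a vertex $B_0$, use the equal volumes of shape-$D$ boxes to obtain the overlap hypothesis of Lemma~\ref{lem:parameter-packing}(iii), and apply its count of $O(\zeta^{-12})$ neighbours. Your extra paragraph re-checking independence from $\norm D\norm{D^{-1}}$ only retraces the lemma's own proof, so it is harmless but not needed, since that uniformity is already part of the lemma's statement.
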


\begin{proof}
Fix a vertex $B_0$. All boxes have the same volume, so every neighbor satisfies the
intersection hypothesis in Lemma~\ref{lem:parameter-packing}(iii). That lemma counts
the neighbors of $B_0$ by $O(\zeta^{-12})$, uniformly in $D$.
\end{proof}

\begin{lemma}[Canonical boxes]\label{lem:canonical-boxes}
Fix $C_0\ge1$.  Let $(Q_i)_{i\in I}$ be a finite indexed family of convex bodies in
$\R^3$ whose ordered John semiaxes lie in $[q_j,C_0q_j]$, $1\le j\le3$.  There are
an assignment $i\mapsto\alpha(i)$ and boxes $R_\alpha$ such that:
\begin{enumerate}[label=\textup{(\roman*)}]
\item $Q_i\subset R_{\alpha(i)}$ and
$\abs{Q_i}\asymp_{C_0}\abs{R_{\alpha(i)}}$;
\item the distinct geometric boxes $R_\alpha$ can be colored with $O_{C_0}(1)$
colors so that boxes of one color are essentially distinct;
\item the number of colors is independent of $q_3/q_1$.
\end{enumerate}
\end{lemma}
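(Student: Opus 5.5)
The plan is to snap each $Q_i$ to a box of one fixed shape, chosen from a maximal separated net in the affine-adapted gauge $d_D$, and then color the net using the degree bound of Corollary~\ref{cor:condition-independent-degree}.

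\emph{Step 1: one common shape.} Fix the diagonal matrix $D=\mathrm{diag}(Lq_1,Lq_2,Lq_3)$, where $L=L(C_0)$ is a large dimensional multiple of $C_0$ to be fixed. For each $i$, let $E_i$ be the John ellipsoid of $Q_i$, with center $c_i$, frame $U_i\in SO(3)$ and semiaxes $a_j(Q_i)\in[q_j,C_0q_j]$. By \eqref{eq:john-inclusions}, $Q_i\subset 3E_i$, and $3E_i$ lies in the box $c_i+U_i\,\mathrm{diag}(3a_j)\,C$. That box lies in $B_i:=B(c_i,U_i)=c_i+U_iDC$ as soon as $L\ge3C_0$. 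Moreover
\[
 \abs{B_i}\asymp L^3q_1q_2q_3\asymp_{C_0}\abs{E_i}\le\abs{Q_i}.
\]
So every $Q_i$ sits in a box of the single shape $D$ with comparable volume.

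\emph{Step 2: snapping to a net.} Take a maximal family $\{R_\alpha\}\subset\{B_i\}$ that is $\zeta$-separated in $d_D$, where $\zeta\le\eta_*$ is a small dimensional constant. By maximality, each $B_i$ has some $R_{\alpha(i)}$ with $d_D(B_i,R_{\alpha(i)})\le\zeta$. Lemma~\ref{lem:parameter-packing}(i) then gives $B_i\subset(1+C\zeta)R_{\alpha(i)}$. The plan is to replace $R_\alpha$ by its dilate $R_\alpha^\flat:=(1+C\zeta)R_\alpha$, which is again a box of the single shape $(1+C\zeta)D$. Then
\[
 Q_i\subset B_i\subset R^\flat_{\alpha(i)},\qquad \abs{R^\flat_{\alpha(i)}}\le 2^3\abs{B_i}\asymp_{C_0}\abs{Q_i},
\]
which proves (i).

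\emph{Step 3: coloring.} Lemma~\ref{lem:parameter-packing}(ii) with $L'=1+C\zeta\le2$ shows the dilated boxes are still $\zeta/2$-separated in $d_{L'D}$. The geometric boxes $R^\flat_\alpha$ are distinct elements of this separated family. Corollary~\ref{cor:condition-independent-degree} bounds the degree of the non-essential-distinctness graph by $O(\zeta^{-12})=O(1)$. A greedy coloring then uses $O(1)$ colors, and each color class is an independent set, hence essentially distinct. This gives (ii).

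The number of colors depends only on the dimensional $\zeta$, with $C_0$ entering only through $L$. It never involves $\norm D\norm{D^{-1}}\asymp q_3/q_1$, which proves (iii). In fact the count is absolute, so $O_{C_0}(1)$ holds trivially.

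\emph{Main obstacle.} The delicate point is the bookkeeping that keeps everything at one shape. All $R_\alpha$ must share exactly the shape $D$, so that Corollary~\ref{cor:condition-independent-degree} applies, whose statement also uses equal volumes. I handle this by building the box from the fixed numbers $q_j$ rather than from each body's own semiaxes $a_j(Q_i)$, and by absorbing the slack $[q_j,C_0q_j]$ into the factor $L$. I also need to check that the dilation in Step 2 does not spoil separation. Part (ii) of Lemma~\ref{lem:parameter-packing} addresses exactly this, losing at most a factor $2$ in $\zeta$.
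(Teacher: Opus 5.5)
Your argument is correct and is essentially the paper's proof. Both build a maximal $d_D$-separated net of the enclosing boxes, dilate by $1+C\zeta$ using Lemma~\ref{lem:parameter-packing}(i), keep separation via part (ii), and colour greedily using the degree bound of Corollary~\ref{cor:condition-independent-degree}. The only difference is that the paper first rounds the semiaxes to the grid $q_j(1+\eta_0)^k$ and uses $(1+\lceil\log_{1+\eta_0}C_0\rceil)^3$ disjoint palettes, which keeps the volume comparison dimensional. You instead absorb the slack into the single shape $\operatorname{diag}(Lq_1,Lq_2,Lq_3)$, which makes the colour count absolute at the price of an $O(C_0^3)$ volume constant; the statement allows both.
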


\begin{proof}
Fix a small dimensional $\eta_0>0$ and round each John semiaxis upward to the least
number of the form $q_j(1+\eta_0)^k$.  There are at most
\begin{equation}\label{eq:axis-rounding-count}
 \left(1+\left\lceil\log_{1+\eta_0}C_0\right\rceil\right)^3
\end{equation}
rounded triples. We use disjoint color palettes for different triples and work with
one exact triple. Choose an arbitrary John eigenframe for each member; when eigenvalues
repeat this is merely a tie-break, and all estimates below are uniform in that choice.
After incorporating a fixed John dilation, every member lies in a box
\[
 B(c,U)=c+UD[-1,1]^3,\qquad D=\operatorname{diag}(q_1,q_2,q_3),
\]
of comparable volume.

The group $H_D$ in Lemma~\ref{lem:parameter-packing} identifies only different frame
representations of a resulting geometric box; it is not being used to remove the
continuous eigenframe tie-break. Choose greedily a maximal $\eta$-separated set of
distinct input boxes in the symmetric gauge $d_D$,
where $\eta>0$ is a sufficiently small dimensional constant.  Maximality assigns
every input box $B_i$ to a selected box $B^0_\alpha$ at distance at most $\eta$.
Lemma~\ref{lem:parameter-packing}(i) gives
\[
 B_i\subset L B^0_\alpha,\qquad L=1+C\eta.
\]
Set $R_\alpha=LB^0_\alpha$.  The containment and volume comparison follow.  Common
dilation leaves the selected boxes at least $L^{-1}\eta$-separated by
Lemma~\ref{lem:parameter-packing}(ii). If $R_\beta$ is not essentially distinct from
$R_\alpha$, their equal volumes give
$\abs{R_\alpha\cap R_\beta}>\abs{R_\alpha}/2$.
Corollary~\ref{cor:condition-independent-degree} bounds
the degree of this collision graph independently of the aspect ratio. A bounded
greedy coloring, followed by the finite number of palettes in
\eqref{eq:axis-rounding-count}, completes the proof.
\end{proof}

\subsection{Collision cancellation}

We now return to the original indices.  Canonical boxes merge geometrically close
projections, but the size of a collision cluster is retained as indexed mass.  On a
layer where the cluster sizes are comparable, the same cluster size improves the
Katz--Tao density of the geometric representatives.  These two effects cancel in the
union estimate.

For an indexed family $\cQ=(Q_i)_{i\in I}$ with comparable John dimensions, choose
canonical boxes as in Lemma~\ref{lem:canonical-boxes}.  Let
\[
 I_\alpha=\{i:\alpha(i)=\alpha\},\quad
 n_\alpha=\#I_\alpha,\quad
 F_\alpha=\bigcup_{i\in I_\alpha}E_i,
\]
and put
\[
 S=\sum_i\abs{Q_i},\qquad M=\sum_i\abs{E_i},
 \qquad\rho=M/S,\qquad m=\Delta_{\max}(\cQ).
\]

\begin{lemma}[Density gain on a collision layer]\label{lem:collision-density}
If $n\le n_\alpha<2n$ on a set $\cA_n$ of canonical boxes, then
\begin{equation}\label{eq:collision-kt}
 \Delta_{\max}(\{R_\alpha:\alpha\in\cA_n\})\lesssim m/n.
\end{equation}
\end{lemma}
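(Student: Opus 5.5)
Fix a convex test body $K$ and bound
\[
 \Delta(\{R_\alpha:\alpha\in\cA_n\},K)=\abs K^{-1}\sum_{\alpha\in\cA_n,\,R_\alpha\subset K}\abs{R_\alpha}.
\]
The plan is to convert geometric box mass in $K$ into indexed body mass in $K$, gaining the factor $n$ from the cluster sizes. Each such $\alpha$ carries $n_\alpha\ge n$ indices $i\in I_\alpha$. By Lemma~\ref{lem:canonical-boxes}(i), $Q_i\subset R_\alpha\subset K$ and $\abs{R_\alpha}\lesssim_{C_0}\abs{Q_i}$ for each of them. Hence
\[
 n\sum_{\alpha\in\cA_n,\,R_\alpha\subset K}\abs{R_\alpha}
 \le\sum_{\alpha}n_\alpha\abs{R_\alpha}
 \lesssim\sum_{\alpha}\sum_{i\in I_\alpha}\abs{Q_i}
 \le\sum_{i:\,Q_i\subset K}\abs{Q_i}.
\]
The last step uses that the $I_\alpha$ are disjoint: each index is assigned to exactly one $\alpha$, so no index is counted twice. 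Dividing by $n\abs K$ gives
\[
 \Delta(\{R_\alpha\},K)\lesssim n^{-1}\Delta(\cQ,K)\le m/n.
\]
Taking the supremum over $K$ then yields \eqref{eq:collision-kt}.

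Two bookkeeping points need care. First, $\Delta_{\max}$ of $\{R_\alpha:\alpha\in\cA_n\}$ should be read over distinct canonical labels $\alpha$. This is automatic: distinct labels arise from distinct selected boxes $B^0_\alpha$ in the construction, so no geometric repetition is introduced on the box side. Second, the implicit constant must depend only on $C_0$, through the volume comparison in Lemma~\ref{lem:canonical-boxes}(i), and not on the aspect ratio. This holds because that comparison comes from the fixed John dilation and the factor $L=1+C\eta$, both of which are dimensional.

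The main obstacle is that the supremum defining $\Delta_{\max}$ runs over all convex bodies $K$, while the containment $R_\alpha\subset K$ is strictly stronger than $Q_i\subset K$. The argument only needs the implication in this direction ($R_\alpha\subset K\Rightarrow Q_i\subset K$), so no enlargement of $K$ is required and the inequality goes the right way. If $m$ were instead defined relative to a parent with the restriction $K\subset W$, I would check that the same test bodies are admissible on both sides. Here $\Delta_{\max}$ is unrestricted, so the bound is immediate.
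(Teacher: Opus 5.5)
Your proof is correct and is essentially the paper's argument: you use $n\le n_\alpha$ and the comparison $\abs{R_\alpha}\lesssim\abs{Q_i}$ from Lemma~\ref{lem:canonical-boxes}(i) to bound the box mass in $K$ by $n^{-1}\sum_{i:Q_i\subset K}\abs{Q_i}\le (m/n)\abs K$, relying only on $R_\alpha\subset K\Rightarrow Q_i\subset K$ and disjointness of the $I_\alpha$. Your first bookkeeping remark is unnecessary: $\Delta_{\max}$ is an indexed quantity, so the estimate holds for $(R_\alpha)_{\alpha\in\cA_n}$ whether or not distinct labels give distinct geometric boxes.
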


\begin{proof}
For every convex body $K$, containment and two-sided volume comparison give
\begin{align*}
 \sum_{\substack{\alpha\in\cA_n\\R_\alpha\subset K}}\abs{R_\alpha}
 &\lesssim\frac1n
 \sum_{\substack{\alpha\in\cA_n\\R_\alpha\subset K}}
 \sum_{i\in I_\alpha}\abs{Q_i}\\
 &\le\frac1n\sum_{i:Q_i\subset K}\abs{Q_i}
 \le\frac mn\abs K.
\end{align*}
The middle implication uses only
$R_\alpha\subset K\Rightarrow Q_i\subset R_\alpha\subset K$.
\end{proof}

\begin{theorem}[Essentially-distinct to indexed]\label{thm:icu3}
Under the hypotheses of Proposition~\ref{prop:ed-cu3}, but allowing arbitrary indexed
repetitions, let $N=\#I$ and
$L_N=1+\lceil\log_2N\rceil$.  Then
\begin{equation}\label{eq:icu3}
 \abs{\bigcup_iE_i}\gtrsim_\eps
 \left(\frac{q_1}{q_3}\right)^\eps
 L_N^{-1}\rho^{K_\eps}m^{-1}\sum_i\abs{Q_i}.
\end{equation}
The estimate remains true without a common ambient ball.
\end{theorem}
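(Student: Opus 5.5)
The plan is to reduce \eqref{eq:icu3} to Proposition~\ref{prop:ed-cu3}, applied to one collision layer and one color class of the canonical boxes of Lemma~\ref{lem:canonical-boxes}. Lemma~\ref{lem:collision-density} supplies the cancellation between cluster size and Katz--Tao density. The only genuine loss is one dyadic pigeonholing over cluster sizes, which produces the factor $L_N^{-1}$. The pigeonholing will be done on the quantity $S_\bullet\rho_\bullet^{K_\eps}$ rather than on the mass, so that the loss stays at $L_N^{-1}$ and does not become $L_N^{-K_\eps}$.

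\emph{Step 1 (removing the ambient ball).} First prove the estimate assuming all $Q_i$ lie in a ball of radius $O(q_3)$. For the general case, tile $\R^3$ by cubes of side $q_3$ and assign each $Q_i$ to the cube containing the center of its John ellipsoid. Each subfamily $\cQ_j$ then lies in a ball of radius $O(q_3)$, and every point lies in $O(1)$ of these balls. Hence
\[
\abs{\bigcup_iE_i}\gtrsim\sum_j\abs{\bigcup_{i\in I_j}E_i}.
\]
Subfamily monotonicity gives $\Delta_{\max}(\cQ_j)\le m$, and $L_{N_j}\le L_N$. Write $S_j=\sum_{i\in I_j}\abs{Q_i}$ and $\rho_j$ for the density of $\cQ_j$. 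Since $\sum_jS_j=S$ and $\sum_jS_j\rho_j=M$, Jensen for $t\mapsto t^{K_\eps}$ gives $\sum_jS_j\rho_j^{K_\eps}\ge S\rho^{K_\eps}$. Summing the localized bounds therefore yields \eqref{eq:icu3}.

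\emph{Step 2 (layers and colors).} In the localized case, build the canonical boxes $R_\alpha$, the clusters $I_\alpha$ and the sets $F_\alpha$. Split the boxes into the dyadic layers $\cA_n=\{\alpha:n\le n_\alpha<2n\}$; there are at most $L_N$ of them. Refine each layer by the $O_{C_0}(1)$ colors of Lemma~\ref{lem:canonical-boxes}(ii), so that each (layer, color) class $\mathcal C$ consists of essentially distinct geometric boxes. For each class set
\[
S_{\mathcal C}=\sum_{\alpha\in\mathcal C}\sum_{i\in I_\alpha}\abs{Q_i},\qquad
M_{\mathcal C}=\sum_{\alpha\in\mathcal C}\sum_{i\in I_\alpha}\abs{E_i},\qquad
\rho_{\mathcal C}=M_{\mathcal C}/S_{\mathcal C}.
\]
The same Jensen argument gives $\sum_{\mathcal C}S_{\mathcal C}\rho_{\mathcal C}^{K_\eps}\ge S\rho^{K_\eps}$. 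Since there are $O(L_N)$ classes, some class satisfies
\[
S_{\mathcal C}\rho_{\mathcal C}^{K_\eps}\gtrsim L_N^{-1}S\rho^{K_\eps}.
\]

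\emph{Step 3 (applying the essentially-distinct estimate).} On the chosen class, apply Proposition~\ref{prop:ed-cu3} to the boxes $R_\alpha$ with shadings $F_\alpha\subset R_\alpha$. The hypotheses hold: the boxes have comparable axes, lie in a ball of radius $O(q_3)$, and are essentially distinct. For the body mass, $\abs{R_\alpha}\asymp\abs{Q_i}$ gives
\[
\sum_{\alpha\in\mathcal C}\abs{R_\alpha}\asymp S_{\mathcal C}/n.
\]
For the shading, $\abs{F_\alpha}\ge(2n)^{-1}\sum_{i\in I_\alpha}\abs{E_i}$ gives
\[
\sum_{\alpha\in\mathcal C}\abs{F_\alpha}\gtrsim M_{\mathcal C}/n,
\]
so the geometric density is $\rho'\gtrsim\rho_{\mathcal C}$. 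Lemma~\ref{lem:collision-density} gives $\Delta_{\max}\lesssim m/n$. Combining these,
\[
\abs{\bigcup_iE_i}\ge\abs{\bigcup_{\alpha\in\mathcal C}F_\alpha}\gtrsim_\eps\Bigl(\frac{q_1}{q_3}\Bigr)^\eps\rho_{\mathcal C}^{K_\eps}\frac{n}{m}\cdot\frac{S_{\mathcal C}}{n}.
\]
The factors of $n$ cancel, and Step~2 finishes the argument. If the maximal $\Delta_{\max}$ of the class is much smaller than $m/n$, this only improves the bound.

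\emph{Main obstacle.} I expect the delicate point to be the bookkeeping in Step~3. The density must be normalized by the class mass $S_{\mathcal C}$, not by $S$, so that the convexity pigeonholing costs only $L_N^{-1}$. The cluster factor $n$ must cancel exactly between the Katz--Tao gain and the loss of geometric body mass. The lower bound $n\le n_\alpha$ is what controls $\Delta_{\max}$, while the upper bound $n_\alpha<2n$ is what keeps the density comparable, so both ends of the dyadic layer are needed.
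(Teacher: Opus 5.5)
Your proposal is correct and follows the paper's proof. It uses the same ingredients: canonical boxes, dyadic collision layers, essentially-distinct color classes, and Proposition~\ref{prop:ed-cu3} combined with Lemma~\ref{lem:collision-density} so that the cluster size $n$ cancels. It also pigeonholes the quantity $S\rho^{K_\eps}$ via weighted Jensen, so the loss is only $L_N^{-1}$. The differences are cosmetic: you pigeonhole over all (layer, color) pairs at once rather than over colors within a layer and then over layers, and you delocalize by bounded overlap of the enlarged cubes instead of the paper's lattice coloring with disjoint enlargements followed by a color pigeonhole.
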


\begin{proof}
First suppose that the family is localized.  On a collision layer
$\cA_n=\{\alpha:n\le n_\alpha<2n\}$, write $S_n,M_n$ for the body and shading
masses of the corresponding indices and $\rho_n=M_n/S_n$.  The elementary estimate
\[
 \abs{F_\alpha}\ge n_\alpha^{-1}
       \sum_{i\in I_\alpha}\abs{E_i}
\]
and volume comparison imply
\begin{equation}\label{eq:collision-masses}
 \sum_{\alpha\in\cA_n}\abs{R_\alpha}\asymp S_n/n,
 \qquad
 \sum_{\alpha\in\cA_n}\abs{F_\alpha}\gtrsim M_n/n.
\end{equation}
Split the canonical boxes into the bounded number of colors supplied by
Lemma~\ref{lem:canonical-boxes}. For a color $\nu$, let $S_{n,\nu}$ and
$M_{n,\nu}$ be the body and shading masses of its assigned input indices, and put
$\rho_{n,\nu}=M_{n,\nu}/S_{n,\nu}$ when $S_{n,\nu}>0$. Within that color,
Proposition~\ref{prop:ed-cu3}, Lemma~\ref{lem:collision-density}, and
\eqref{eq:collision-masses} give the lower bound
\[
 \left(\frac{q_1}{q_3}\right)^\eps
 m^{-1}S_{n,\nu}\rho_{n,\nu}^{K_\eps}
\]
for the union of its merged shadings. The color unions need not be disjoint, so we do
not add these bounds. Instead, weighted Jensen gives
\[
 \sum_\nu S_{n,\nu}\rho_{n,\nu}^{K_\eps}
 \ge S_n\rho_n^{K_\eps}.
\]
Because the number of colors is bounded, one color $\nu_*$ contributes a fixed
fraction of the right-hand side. The union over the entire collision layer contains
the union for $\nu_*$, and therefore
\begin{equation}\label{eq:one-collision-layer}
 \abs{\bigcup_{\alpha\in\cA_n}F_\alpha}
 \gtrsim_\eps
 \left(\frac{q_1}{q_3}\right)^\eps
 \rho_n^{K_\eps}m^{-1}S_n.
\end{equation}
At the level of powers, the factor $1/n$ in canonical-box mass is canceled by the
factor $n$ in $(m/n)^{-1}$.

There are at most $L_N$ nonempty layers.  Since $K_\eps\ge1$, weighted Jensen gives
\begin{equation}\label{eq:layer-jensen}
 \sum_n S_n\rho_n^{K_\eps}
 \ge S\left(\sum_n\frac{S_n}{S}\rho_n\right)^{K_\eps}
 =S\rho^{K_\eps}.
\end{equation}
Hence one layer contributes at least $L_N^{-1}S\rho^{K_\eps}$, and
\eqref{eq:icu3} follows from \eqref{eq:one-collision-layer}.

For a nonlocalized family, assign each body by its John center to a cube of a
$q_3$-lattice.  A fixed enlargement of that cube contains the whole body.  Color the
lattice with a dimensional number of colors so that equal-color enlargements are
disjoint.  Apply the localized argument in each cube.  Within a fixed lattice color,
the resulting union lower bounds add because the enlargements are disjoint; weighted
Jensen combines their body-mass densities. Across lattice colors the unions can
overlap, so weighted Jensen followed by a bounded-color pigeonhole selects one color;
the full union contains that selected color's union. This costs only a dimensional
constant. The global bound $m$ dominates each local Katz--Tao density, which gives the
final assertion.
\end{proof}

Complete duplication illustrates the normalization.  If all $Q_i=Q$ and all
$E_i=E$, then
$m=N$, and the right side of \eqref{eq:icu3} is bounded by a constant multiple of
$\rho^{K_\eps}\abs Q\le\abs E$.  If the shadings are disjoint, the cluster union
bound is conservative but valid.  Throughout the construction the original indices
remain in the mass and Katz--Tao quantities.

\subsection{The local Frostman form}

The preceding theorem assumes comparable John dimensions.  For the projected family
arising in Section~\ref{sec:local-r4}, each body instead contains an $h$-ball and lies
in a common box of longest side $R$.  Dyadic decomposition of the three John axes
reduces this more flexible setting to Theorem~\ref{thm:icu3}; relative Frostman
control supplies the uniform Katz--Tao bound in every bin.

\begin{theorem}[Indexed local Frostman union estimate]\label{thm:lf3}
Let $B'\subset\R^3$ be a box of longest side $R$, and let
$\cQ=(Q_i)_{i\in I}$ be a nonempty finite indexed family of convex bodies contained
in $B'$.  Suppose every
$Q_i$ contains a ball of radius $h$, has diameter $O(R)$, and
$C_F(\cQ,B')\le C_F$.  For measurable $E_i\subset Q_i$, let
\[
 \rho=\frac{\sum_i\abs{E_i}}{\sum_i\abs{Q_i}},\qquad N=\#I,
\]
and set
\[
 L_N=1+\lceil\log_2N\rceil,\qquad
 J_h=\bigl(1+\lceil\log_+(R/h)\rceil\bigr)^3.
\]
Then, for every $\eps>0$,
\begin{equation}\label{eq:lf3}
 \abs{\bigcup_iE_i}\gtrsim_\eps
 \left(\frac hR\right)^\eps
 L_N^{-1}J_h^{-1}C_F^{-1}\rho^{K_\eps}\abs{B'}.
\end{equation}
\end{theorem}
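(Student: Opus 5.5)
We bin the family dyadically by John axes, apply Theorem~\ref{thm:icu3} in each bin, and then select one good bin by weighted Jensen. As we will see, the real work is the Katz--Tao bound $m_\beta$ per bin.

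\textbf{Binning.} Each $Q_i$ contains an $h$-ball and has diameter $O(R)$, so its ordered John semiaxes satisfy $h\lesssim a_j(Q_i)\lesssim R$. Assign each index to the dyadic triple $\beta=(k_1,k_2,k_3)$ with $a_j(Q_i)\in[2^{k_j},2^{k_j+1})$. There are at most $J_h$ nonempty bins, up to a constant. Write $S_\beta$ and $M_\beta$ for the body and shading masses of bin $\beta$, and put $\rho_\beta=M_\beta/S_\beta$. Each bin has comparable John dimensions $q_j=2^{k_j}$ with $C_0=2$. Theorem~\ref{thm:icu3} (nonlocalized form) then gives
\[
 \abs{\bigcup_{i\in\beta}E_i}\gtrsim_\eps\Bigl(\frac{q_1}{q_3}\Bigr)^{\eps}L_N^{-1}\rho_\beta^{K_\eps}m_\beta^{-1}S_\beta,
 \qquad m_\beta=\Delta_{\max}(\cQ_\beta).
\]
Since $q_1\gtrsim h$ and $q_3\lesssim R$, the factor $(q_1/q_3)^\eps$ is at least a constant times $(h/R)^\eps$.

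\textbf{Katz--Tao bound per bin.} We need $m_\beta\lesssim C_F\,\Delta(\cQ,B')=C_F S/\abs{B'}$, where $S=\sum_i\abs{Q_i}$. For a convex body $K\subset B'$ this follows from subfamily monotonicity: $\Delta(\cQ_\beta,K)\le\Delta(\cQ,K)\le C_F\Delta(\cQ,B')$. The difficulty is that $\Delta_{\max}$ is a supremum over all convex $K$, not only $K\subset B'$. Here we replace $K$ by $K\cap B'$. This is still convex, it contains every $Q_i\subset K$ (since $Q_i\subset B'$), and $\abs{K\cap B'}\le\abs K$. When $K\cap B'$ has positive volume, this gives
\[
 \Delta(\cQ_\beta,K)\le\frac{\abs{K\cap B'}}{\abs K}\,\Delta(\cQ_\beta,K\cap B')\le C_F\frac{S}{\abs{B'}}.
\]
When $K\cap B'$ has zero volume, no $Q_i$ fits inside $K$, because each $Q_i$ has positive volume, so the sum is empty. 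Substituting this bound, each bin's union is
\[
 \gtrsim_\eps (h/R)^\eps L_N^{-1}C_F^{-1}\abs{B'}\,\frac{S_\beta}{S}\,\rho_\beta^{K_\eps}.
\]

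\textbf{Selecting one bin.} Since $K_\eps\ge1$, weighted Jensen over bins gives $\sum_\beta (S_\beta/S)\rho_\beta^{K_\eps}\ge\rho^{K_\eps}$. With at most $J_h$ bins, pigeonholing picks one $\beta_*$ with $(S_{\beta_*}/S)\rho_{\beta_*}^{K_\eps}\ge J_h^{-1}\rho^{K_\eps}$. The full union contains the union over bin $\beta_*$, which yields \eqref{eq:lf3}.

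If the dyadic bins do not exactly match the hypothesis "semiaxes in $[q_j,C_0q_j]$" of Lemma~\ref{lem:canonical-boxes}, only the constants change.
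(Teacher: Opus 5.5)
Your proposal is correct and follows the paper's proof essentially step for step. It uses the same dyadic binning of the three John axes, the same per-bin application of Theorem~\ref{thm:icu3} with the Katz--Tao bound $\Delta_{\max}\le C_F S/\abs{B'}$ (obtained by passing from $K$ to $K\cap B'$ and using subfamily monotonicity), and the same weighted Jensen and pigeonhole selection of a single bin, with your explicit handling of $\abs{K\cap B'}=0$ being a detail the paper leaves implicit.
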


\begin{proof}
Put $S=\sum_i\abs{Q_i}$ and $D=S/\abs{B'}$.  The relative Frostman hypothesis
implies the absolute indexed bound
\begin{equation}\label{eq:absolute-from-relative}
 \Delta_{\max}(\cQ_0)\le C_FD
\end{equation}
for every subfamily $\cQ_0$.  Indeed, for a convex test body $K$, the bodies counted
in $K$ are also counted in $K\cap B'$, while
$\abs K\ge\abs{K\cap B'}$; use \eqref{eq:frostman-def} and subfamily monotonicity.

Because $Q_i$ contains an $h$-ball and lies in $B'$, its shortest John semiaxis is
$\gtrsim h$ and its longest semiaxis is $O(R)$.  Divide the three ordered semiaxes
into dyadic triples.  There are at most $O(J_h)$ nonempty bins.  For bin $j$, let
$S_j,M_j$ be its body and shading masses and $\rho_j=M_j/S_j$.  Theorem~\ref{thm:icu3}
and \eqref{eq:absolute-from-relative} give
\[
 \abs{\bigcup_{i\text{ in bin }j}E_i}
 \gtrsim_\eps
 \left(\frac hR\right)^\eps L_N^{-1}
 (C_FD)^{-1}S_j\rho_j^{K_\eps}
 =\left(\frac hR\right)^\eps L_N^{-1}C_F^{-1}\abs{B'}
   \frac{S_j}{S}\rho_j^{K_\eps}.
\]
Finally,
\[
 \sum_j\frac{S_j}{S}\rho_j^{K_\eps}
 \ge\left(\sum_j\frac{S_j}{S}\rho_j\right)^{K_\eps}
 =\rho^{K_\eps}.
\]
One of the at most $O(J_h)$ bins therefore gives \eqref{eq:lf3}.  When $h\gtrsim R$
there is one bin, and the same proof applies.  Empty shadings give zero right side.
\end{proof}

The denominator in Theorem~\ref{thm:lf3} is the body mass of the fixed indexed
family, including indices whose current shading is empty.  Consequently the same
estimate can be applied to every measurable refinement produced later by the
cellular construction.

\section{The local four-dimensional transfer}\label{sec:local-r4}

We now convert the indexed three-dimensional estimate into a parentwise statement in
four dimensions.  The main issue is that thickening children of different geometry
produces different volume weights.  We first quantify thickening for arbitrary
measurable shadings, then project along a shortest parent direction so that the
volume of each thickened body is comparable to $h$ times its projected volume.  The resulting weighted
Frostman data can be passed to Theorem~\ref{thm:lf3} and lifted back by
Theorem~\ref{thm:lift}.

\subsection{Convex thickening}

The cellular argument will apply the local estimate after several measurable
refinements.  We therefore need a thickening bound that depends on the density of an
arbitrary measurable shading, rather than on any regularity of that shading.

\begin{lemma}[Thickening an arbitrary measurable shading]\label{lem:thickening}
For every convex body $V\subset\R^n$, every measurable $Y\subset V$, and every
$h>0$,
\begin{equation}\label{eq:thickening}
 \abs{N_h(Y)}\ge c_n\frac{\abs Y}{\abs V}\abs{N_h(V)}.
\end{equation}
\end{lemma}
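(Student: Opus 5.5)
The plan is to discretize $V$ at scale $h$ in its own John frame, and then compare both sides of \eqref{eq:thickening} with the number of grid cells that carry positive shaded mass. The estimate is affine-natural only in the directions longer than $h$. In the directions shorter than $h$, thickening inflates everything up to size $h$ no matter what $Y$ is. So the cells should be long (of length $h$) in the long directions and span the full width of $V$ in the short directions.

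\emph{Step 1: normalization and the upper bound for $N_h(V)$.} Let $E_V$ be the John ellipsoid with semiaxes $a_1\le\cdots\le a_n$, and let $P$ be the box aligned with its principal axes with half-sides $na_j$. By \eqref{eq:john-inclusions}, $V\subset P$ and $\abs V\asymp_n\prod_ja_j$. Since $N_h(P)$ lies in the box with half-sides $na_j+h$,
\[
 \abs{N_h(V)}\le\abs{N_h(P)}\lesssim_n\prod_j(a_j+h)\asymp\prod_{a_j>h}a_j\cdot h^{\#\{j:a_j\le h\}} .
\]
We call $j$ \emph{long} if $a_j>h$ and \emph{short} otherwise.

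\emph{Step 2: the cell decomposition.} Partition $P$ into a grid of boxes $Q$ whose side length is $2na_j$ in each short direction, so there is only one cell across. In each long direction the side length is between $h$ and $2h$. Each cell then has all sides $\lesssim_n h$, and
\[
 \abs Q\asymp_n\prod_{\text{short}}a_j\cdot h^{\#\text{long}} .
\]
Let $\mathcal G$ be the set of cells with $\abs{Y\cap Q}>0$. Then
\[
 \abs Y\le\#\mathcal G\cdot\max_Q\abs Q .
\]
For each $Q\in\mathcal G$, pick a point $y_Q\in Y\cap Q$. Then $B(y_Q,h)\subset N_h(Y)$.

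\emph{Step 3: bounded overlap, which is the main point to check.} I will show that the balls $B(y_Q,h)$, $Q\in\mathcal G$, have overlap bounded by a dimensional constant. Suppose $x$ lies in such a ball. Then $Q$ meets $B(x,h)$, so $Q$ lies in $B(x,(1+C_n)h)$, because every side of $Q$ is $\lesssim_n h$. In the short directions there is only one cell. In each long direction the cells have side $\ge h$, so only $O_n(1)$ of them meet an interval of length $O_n(h)$. Hence the overlap is $O_n(1)$, and
\[
 \abs{N_h(Y)}\gtrsim_n\#\mathcal G\,h^n .
\]

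\emph{Step 4: combining the bounds.} From Steps 2 and 3,
\[
 \abs{N_h(Y)}\gtrsim_n\frac{\abs Y\,h^n}{\prod_{\text{short}}a_j\,h^{\#\text{long}}}=\abs Y\,\frac{h^{\#\text{short}}}{\prod_{\text{short}}a_j}.
\]
Steps 1 and 2 give the same quantity, up to dimensional constants, for the right-hand side of \eqref{eq:thickening}:
\[
 \frac{\abs Y}{\abs V}\abs{N_h(V)}\lesssim_n\abs Y\,\frac{\prod_{\text{long}}a_j\,h^{\#\text{short}}}{\prod_ja_j}=\abs Y\,\frac{h^{\#\text{short}}}{\prod_{\text{short}}a_j}.
\]
This proves \eqref{eq:thickening} with a dimensional $c_n$. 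The degenerate case $\abs Y=0$ is trivial.

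Two points need care. The overlap count in Step 3 must be uniform in the aspect ratio, which is why the short directions are not subdivided. The side lengths in Step 2 must be rounded so that the grid exactly tiles $P$ while every cell stays comparable in size.
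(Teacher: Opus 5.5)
Your proof is correct. It uses the same split of the John axes into long ($a_j>h$) and short ($a_j\le h$) directions as the paper, and the same upper bound $\abs{N_h(V)}\lesssim_n\prod_j(a_j+h)$. The difference is how you get the lower bound for $\abs{N_h(Y)}$: you discretize, while the paper uses Fubini.

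The paper projects onto the span $L$ of the long axes and writes $S=L^\perp$, $m=\dim S$. Every $S$-fibre of $Y$ has measure at most $C_n\prod_{a_j<h}a_j$. Hence the set $X\subset L$ of nonnull fibres satisfies $\abs X\gtrsim_n\abs Y/\prod_{a_j<h}a_j$. Over each $x\in X$, the fibre of $N_h(Y)$ contains an $m$-dimensional $h$-ball. This gives $\abs{N_h(Y)}\ge\omega_m h^m\abs X$ directly.

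That route never thickens in the long directions. It needs no tiling, no rounding of side lengths, and no overlap count, because fibres over distinct points of $L$ are automatically disjoint.

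Your argument has discrete counterparts for each piece. The visited cells $\mathcal G$ play the role of $X$. Full $n$-dimensional $h$-balls with $O_n(1)$ overlap play the role of the fibre balls. Leaving the short directions unsubdivided, which is what keeps your overlap count uniform in the aspect ratio, corresponds exactly to the paper integrating only over $L$.

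Your version gives a slightly more informative statement: $N_h(Y)$ contains $\gtrsim\#\mathcal G$ essentially disjoint $h$-balls, one for each visited cell. This is the same kind of cell counting the paper later uses in the grid-neighbor covering lemma. The cost is the tiling bookkeeping, which does go through. Each long side has length $2na_j>2h$, so it splits into equal pieces with lengths in $[h,2h)$. The overlap count then only needs that at most three such pieces in each long coordinate meet an interval of length $2h$. Your containment $Q\subset B(x,(1+C_n)h)$ is not actually needed for this count.
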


\begin{proof}
Use John coordinates with semiaxes $a_1,\ldots,a_n$.  From
\eqref{eq:john-inclusions} and a containing rectangular box,
\begin{equation}\label{eq:nhv-upper}
 \abs V\asymp_n\prod_j a_j,\qquad
 \abs{N_h(V)}\lesssim_n\prod_j(a_j+h).
\end{equation}
Let $L$ be the span of the axes with $a_j\ge h$ and $S=L^\perp$.  Write $m=\dim S$
and let $Y_x$ be the $S$-fibre over $x\in L$.  The John box bounds every fibre by
$C_n\prod_{a_j<h}a_j$.  If $X=\{x:\abs{Y_x}>0\}$, Fubini gives
\begin{equation}\label{eq:projection-support}
 \abs X\gtrsim_n\frac{\abs Y}{\prod_{a_j<h}a_j}.
\end{equation}
For every $x\in X$, the corresponding fibre of $N_h(Y)$ contains the $h$-neighborhood
in $S$ of a nonempty set and hence has $m$-volume at least $\omega_mh^m$.  Integrating,
\begin{equation}\label{eq:nhy-lower}
 \abs{N_h(Y)}\gtrsim_n\abs Y
       \prod_{a_j<h}\frac h{a_j}.
\end{equation}
When $m=0$, this is simply $\abs{N_h(Y)}\ge\abs Y$.  On the other hand,
\eqref{eq:nhv-upper} gives
\[
 \frac{\abs Y}{\abs V}\abs{N_h(V)}
 \lesssim_n\abs Y\prod_{a_j<h}\frac h{a_j}.
\]
Together with \eqref{eq:nhy-lower}, this proves the result at every intermediate
scale.  If $Y$ is null, the right side of \eqref{eq:thickening} is zero.
\end{proof}

The same John-box argument, now using a box inside $E_V$ and a box inside the
Euclidean $h$-ball, gives the two-sided estimate
\begin{equation}\label{eq:parallel-volume}
 \abs{N_h(V)}\asymp_n\prod_{j=1}^n(a_j(V)+h).
\end{equation}

\begin{lemma}[Buffered parent stability]\label{lem:buffer}
Let $(W_\alpha)$ be a finite indexed family in $\R^n$, let
$0<r_\alpha\le C_0a_1(W_\alpha)$, and put
$W_\alpha^\sharp=N_{r_\alpha}(W_\alpha)$.  Then
\begin{equation}\label{eq:buffer-volume}
 \abs{W_\alpha^\sharp}\asymp_{n,C_0}\abs{W_\alpha},
 \qquad
 \Delta_{\max}(\{W_\alpha^\sharp\})\lesssim_{n,C_0}
       \Delta_{\max}(\{W_\alpha\}).
\end{equation}
\end{lemma}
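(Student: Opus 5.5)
The volume comparison comes first and is a direct consequence of the parallel-volume estimate \eqref{eq:parallel-volume}. Applied to $W_\alpha$ at radius $r_\alpha$, it gives
\[
 \abs{W_\alpha^\sharp}\asymp_n\prod_j\bigl(a_j(W_\alpha)+r_\alpha\bigr).
\]
Since $r_\alpha\le C_0a_1(W_\alpha)\le C_0a_j(W_\alpha)$, each factor lies between $a_j(W_\alpha)$ and $(1+C_0)a_j(W_\alpha)$. Combining this with $\abs{W_\alpha}\asymp_n\prod_ja_j(W_\alpha)$ from \eqref{eq:john-inclusions} yields the first half of \eqref{eq:buffer-volume}.

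For the Katz--Tao bound, fix a convex test body $K$ and consider the indices with $W_\alpha^\sharp\subset K$. For each such index, $W_\alpha\subset W_\alpha^\sharp\subset K$. The natural step is to replace $K$ by a comparable convex body and test the original family against it. I will use $K$ itself, and the comparison is then
\[
 \frac1{\abs K}\sum_{W_\alpha^\sharp\subset K}\abs{W_\alpha^\sharp}
 \lesssim_{n,C_0}\frac1{\abs K}\sum_{W_\alpha\subset K}\abs{W_\alpha}
 \le\Delta_{\max}(\{W_\alpha\}).
\]
The first inequality uses the volume comparison just proved termwise, together with the observation that the index set on the left is contained in the index set on the right. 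Taking the supremum over $K$ gives the second half of \eqref{eq:buffer-volume}.

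The argument needs only containment monotonicity, because enlarging every member shrinks the admissible index set for a fixed test body. I therefore expect no real obstacle. The one point to check is that $N_{r_\alpha}(W_\alpha)$ is again a convex body, so that it belongs to the class where $\Delta_{\max}$ and the John data make sense; this holds because the neighborhood of a convex set is convex and compact.
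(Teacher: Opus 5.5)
Your proof is correct and matches the paper's argument: the volume comparison comes from the parallel-volume estimate \eqref{eq:parallel-volume} with $a_j(W_\alpha)\le a_j(W_\alpha)+r_\alpha\le(1+C_0)a_j(W_\alpha)$. The Katz--Tao bound comes from $W_\alpha\subset W_\alpha^\sharp\subset K$ plus termwise volume comparison against the same test body $K$, followed by the supremum over $K$.
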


\begin{proof}
The volume comparison follows from \eqref{eq:parallel-volume}.  If
$W_\alpha^\sharp\subset K$, then $W_\alpha\subset K$, and hence
\[
 \sum_{\alpha:W_\alpha^\sharp\subset K}\abs{W_\alpha^\sharp}
 \lesssim\sum_{\alpha:W_\alpha\subset K}\abs{W_\alpha}
 \le\Delta_{\max}(\{W_\alpha\})\abs K.
\]
Take the supremum over positive-volume convex $K$.  Every sum retains the parent
indices.
\end{proof}

The buffer lemma allows the coarse shading to live in a genuine neighborhood of the
parent while preserving both its volume scale and its indexed Katz--Tao density.
Section~\ref{sec:sharpness} explains why an intersection with the original parent
would not provide an aspect-uniform substitute.

\subsection{Shortest-direction projection}

Projection is useful only if the lost fibre direction has length comparable to the
analytic radius.  Choosing a shortest John direction of the parent provides this
comparison simultaneously for every child, regardless of the child's own
orientation.

\begin{lemma}[Projection--volume comparison]\label{lem:pv}
Let $e$ be a unit vector, let $\pi$ be orthogonal projection onto $e^\perp$, and set
\[
 P=N_h(V),\qquad Q=\pi(P).
\]
Then
\begin{equation}\label{eq:pv-general}
 c_nh\abs Q\le\abs P\le(\width_e(V)+2h)\abs Q.
\end{equation}
In particular, if $V\subset W$ and $\width_e(W)\le C_0h$, then
$\abs P\asymp_{n,C_0}h\abs Q$.
\end{lemma}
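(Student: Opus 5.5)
The two inequalities in \eqref{eq:pv-general} come from two different uses of Fubini along the direction $e$, and I would prove them separately.

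\emph{Upper bound.} Write points as $y+se$ with $y\in e^\perp$. For each $y\in Q$, the fibre $P_y=\{s:y+se\in P\}$ is a segment, because $P$ is convex. Its length is at most $\width_e(P)$. Since $N_h$ adds the ball $hB$, widths satisfy $\width_e(P)=\width_e(V)+2h$. Fubini then gives $\abs P=\int_Q\abs{P_y}\,dy\le(\width_e(V)+2h)\abs Q$.

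\emph{Lower bound.} I would fix $y\in Q$ and pick $s$ with $y+se\in P$. By definition of $P$ there is a point $v\in V$ with $\abs{y+se-v}\le h$.
\begin{itemize}[label=--]
\item The ball $B(v,h)$ lies in $P$.
\item The point $y+se$ lies in this ball, so the chord of $B(v,h)$ through $y+se$ in direction $e$ is contained in $P_y$.
\end{itemize}
That chord can be short when $y$ lies near $\partial Q$, so a pointwise length bound of order $h$ fails near the boundary. I would handle this by comparing the projections of the two bodies. Set $Q_0=\pi(N_{h/2}(V))=N^{e^\perp}_{h/2}(\pi V)$, the $h/2$-neighbourhood of $\pi V$ inside $e^\perp$; note also $Q=N^{e^\perp}_h(\pi V)$.

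For $y\in Q_0$, pick $v\in V$ with $\abs{\pi v-y}\le h/2$. The chord of $B(v,h)$ through $y$ in direction $e$ then has length at least $2\sqrt{h^2-h^2/4}=\sqrt3\,h$. Hence $\abs{P_y}\ge\sqrt3\,h$ on $Q_0$, and Fubini gives $\abs P\ge\sqrt3\,h\abs{Q_0}$.

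It remains to show $\abs{Q_0}\ge c_n\abs Q$. Let $X=\pi V$, a compact convex subset of $e^\perp\cong\R^{n-1}$. By Steiner's formula, $\abs{N_t(X)}$ is a polynomial in $t$ with nonnegative coefficients and degree at most $n-1$. Every such polynomial $f$ satisfies $f(t/2)\ge2^{-(n-1)}f(t)$. Taking $t=h$ gives $\abs{Q_0}\ge2^{1-n}\abs Q$. If $X$ is lower-dimensional, Steiner's formula still applies, since $X$ is a compact convex set. This yields the lower bound with $c_n=\sqrt3\,2^{1-n}$.

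\emph{The ``in particular''.} If $V\subset W$, then $\width_e(V)\le\width_e(W)\le C_0h$. The two-sided bound therefore reads $c_nh\abs Q\le\abs P\le(C_0+2)h\abs Q$, which is the claimed comparability.

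The only step needing care is the lower bound near $\partial Q$. Shrinking to the $h/2$-neighbourhood of $\pi V$ and using the Steiner-polynomial doubling bound resolves it.
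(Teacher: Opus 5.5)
Your proof is correct and follows the paper's argument essentially step for step: Fubini along $e$ with $\width_e(P)=\width_e(V)+2h$ for the upper bound, and for the lower bound the $h/2$-neighbourhood of $\pi V$, the $\sqrt3\,h$ chord of $B(v,h)$, and the comparison $\abs{Q_0}\ge2^{1-n}\abs Q$. The only difference is that you get this last comparison from Steiner's polynomial, whereas the paper uses the convexity containment $\pi V+B(0,h)\subset x_0+2[(\pi V+B(0,h/2))-x_0]$ for any $x_0\in\pi V$, which gives the same constant $2^{1-n}$ without invoking Steiner's formula.
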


\begin{proof}
Let $K=\pi(V)$.  Orthogonal projection commutes with Minkowski addition by a ball,
so $Q=K+B_{e^\perp}(0,h)$.  If $z\in K+B(0,h/2)$, choose $x\in K$ with
$\abs{z-x}<h/2$ and a point $(t,x)\in V$ in coordinates $\R e\times e^\perp$.
The fibre over $z$ of the $h$-ball around $(t,x)$ has length at least $\sqrt3h$.
Thus
\[
 \abs P\ge\sqrt3h\abs{K+B(0,h/2)}.
\]
Convexity, with any $x_0\in K$, gives
$K+B(0,h)\subset x_0+2[(K+B(0,h/2))-x_0]$; hence the two projected volumes differ by
at most $2^{n-1}$, giving the lower bound.  The entire $e$-coordinate range of
$P$ has length at most $\width_e(V)+2h$, which proves the upper bound by Fubini.
\end{proof}

Let $W\subset\R^4$ have John semiaxes
$u_1\le u_2\le u_3\le u_4$, choose a shortest John direction $e_1$, and let
$h=c_0u_1$.  Since $W\subset4E_W$,
$\width_{e_1}(W)\le8u_1=8h/c_0$.  Therefore Lemma~\ref{lem:pv} applies uniformly
to every $V\subset W$, without any condition on the orientation of $V$.  Moreover,
in the same coordinates the product box
\begin{equation}\label{eq:parent-product-box}
 B=I\times B'
 =\prod_{j=1}^4[c_j-(4u_j+h),c_j+(4u_j+h)]
\end{equation}
contains $N_h(W)$ and satisfies
\begin{equation}\label{eq:product-box-scales}
 H:=\abs I\asymp_{c_0}h,\qquad
 R:=\operatorname{longside}(B')\asymp_{c_0}u_4,\qquad
 \abs B\asymp_{c_0}\abs W.
\end{equation}

\subsection{Mean-normalized weighted Frostman inheritance}

Thickening replaces the body mass $\abs V$ by $r_W(V)\abs V$.  The relative
Frostman coefficient must therefore be computed with the same weights.  The next two
lemmas compare this weighted coefficient with the original family and show that it
survives shortest-direction projection.

\begin{lemma}[Comparison with the unweighted coefficient]
\label{lem:weighted-frostman-comparison}
Let $\cV_W$ be nonempty and $C_F(\cV_W,W)\le C$.  Then
\begin{equation}\label{eq:weighted-frostman-comparison}
 C^{\rm wt}_{F,h}(\cV_W,W)
 \le C\frac{r_W^+}{\overline r_W}.
\end{equation}
\end{lemma}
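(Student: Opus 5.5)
The plan is to compare numerator and denominator of \eqref{eq:weighted-frostman} termwise: bound the weights above by $r_W^+$ in the numerator, and write the denominator exactly in terms of the mean $\overline r_W$. Fix a convex body $K\subset W$. In the numerator of \eqref{eq:weighted-frostman} we use $r_W(V)\le r_W^+$ for every $V\subset K$, giving
\[
 \abs K^{-1}\sum_{\substack{V\in\cV_W\\V\subset K}}r_W(V)\abs V
 \le r_W^+\,\Delta(\cV_W,K).
\]
For the denominator, the definition \eqref{eq:inflation-statistics} of the mean gives the identity
\[
 \abs W^{-1}\sum_{V\in\cV_W}r_W(V)\abs V=\overline r_W\,\frac{T_W}{\abs W}=\overline r_W\,\Delta(\cV_W,W),
\]
where $\Delta(\cV_W,W)=T_W/\abs W$ because every $V\in\cV_W$ lies in $W$ by (H1). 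This quantity is positive since $\cV_W$ is nonempty and consists of convex bodies.

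Dividing the two expressions, the ratio for this fixed $K$ is at most
\[
 \frac{r_W^+}{\overline r_W}\cdot\frac{\Delta(\cV_W,K)}{\Delta(\cV_W,W)}
 \le \frac{r_W^+}{\overline r_W}\,C_F(\cV_W,W)\le C\,\frac{r_W^+}{\overline r_W},
\]
using the definition \eqref{eq:frostman-def}. Taking the supremum over convex $K\subset W$ yields \eqref{eq:weighted-frostman-comparison}.

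There is no real obstacle here. The only point to watch is that the denominator must be handled as an exact identity through $\overline r_W$, not bounded below by $r_W^-$. A lower bound by $r_W^-$ would produce the weaker factor $r_W^+/r_W^-$, and this is exactly why the mean-normalized efficiency enters. Indexed repetitions cause no difficulty, since both sides count indices identically.
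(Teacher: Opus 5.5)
Your proposal is correct and follows the paper's own proof: bound the weights in the numerator by $r_W^+$, write the weighted total density exactly as $\overline r_W T_W/\abs W$, and divide using $C_F(\cV_W,W)\le C$. Your closing observation that the denominator must be handled through $\overline r_W$ rather than bounded below by $r_W^-$ matches the paper's remark that this step uses the maximum-to-mean ratio.
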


\begin{proof}
For a convex $K\subset W$,
\[
 \sum_{V\subset K}r_W(V)\abs V
 \le r_W^+\sum_{V\subset K}\abs V
 \le r_W^+C\frac{T_W}{\abs W}\abs K.
\]
The weighted total density equals
\[
 \frac{\sum_Vr_W(V)\abs V}{\abs W}
 =\overline r_W\frac{T_W}{\abs W}.
\]
Division proves \eqref{eq:weighted-frostman-comparison}.  This concentration step
uses the maximum-to-mean ratio; the minimum inflation enters separately when shaded
density is converted in Corollary~\ref{cor:r4-unweighted}.
\end{proof}

\begin{lemma}[Weighted thickening and projection inheritance]
\label{lem:fr-inheritance}
Let $(V_i)$ be a nonempty indexed family in a convex body $W$, put
$P_i=N_h(V_i)\subset B$, assume $\abs B\le C_B\abs W$, and write
$r_i=\abs{P_i}/\abs{V_i}$,
$r_+=\max_i r_i$, and
$\overline r=(\sum_i r_i\abs{V_i})/(\sum_i\abs{V_i})$.  Then
\begin{equation}\label{eq:fr-thick}
 C_F(\{P_i\},B)
 \le C_B C^{\rm wt}_{F,h}(\{V_i\},W).
\end{equation}
If $C_F(\{V_i\},W)\le C$, then
\begin{equation}\label{eq:fr-thick-elementary}
 C_F(\{P_i\},B)
 \le C_BC\frac{r_+}{\overline r}.
\end{equation}
Suppose also that $B=I\times B'$, $Q_i=\pi(P_i)\subset B'$, and
\[
 c_Ph\abs{Q_i}\le\abs{P_i}\le C_Ph\abs{Q_i}.
\]
Then
\begin{equation}\label{eq:fr-project}
 C_F(\{Q_i\},B')
 \le\frac{C_P}{c_P}C_F(\{P_i\},B).
\end{equation}
\end{lemma}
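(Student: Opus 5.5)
The plan is to verify the three inequalities one at a time directly from the definitions. Each one is a ratio comparison between a numerator density and a denominator density. Throughout, write $T=\sum_i\abs{V_i}$ and note that $\sum_i\abs{P_i}=\sum_i r_i\abs{V_i}=\overline r\,T$.

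For \eqref{eq:fr-thick}, fix a convex body $K\subset B$.
\begin{itemize}[label=\textendash]
\item \emph{Numerator.} If $P_i\subset K$, then $V_i\subset P_i\subset K$ and $V_i\subset W$, so $V_i\subset K\cap W$. The set $K\cap W$ is convex. It has positive volume whenever the sum below is nonempty, since it then contains some $V_i$. Therefore
\[
 \frac{1}{\abs K}\sum_{P_i\subset K}\abs{P_i}
 \le\frac{1}{\abs{K\cap W}}\sum_{V_i\subset K\cap W}r_i\abs{V_i}
 \le C^{\rm wt}_{F,h}(\{V_i\},W)\,\frac{\sum_ir_i\abs{V_i}}{\abs W}.
\]
\item \emph{Denominator.} The density of the thickened family in $B$ is $\Delta(\{P_i\},B)=\sum_ir_i\abs{V_i}/\abs B$. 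This is at least $C_B^{-1}\sum_ir_i\abs{V_i}/\abs W$, using $\abs B\le C_B\abs W$.
\item \emph{Conclusion.} Dividing the numerator bound by the denominator bound and taking the supremum over $K$ gives \eqref{eq:fr-thick}.
\end{itemize}
Inequality \eqref{eq:fr-thick-elementary} then follows immediately by inserting Lemma~\ref{lem:weighted-frostman-comparison}. Its proof uses only the stated weights $r_i$ and the parent $W$, so it applies verbatim to the family $(V_i)$.

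For \eqref{eq:fr-project}, fix a convex body $K'\subset B'$ and set $K=I\times K'$. This is a convex body in $B$ with $\abs K=H\abs{K'}$.
\begin{itemize}[label=\textendash]
\item \emph{Numerator.} If $Q_i\subset K'$, then $P_i\subset\pi^{-1}(K')\cap B=K$, because $P_i\subset B=I\times B'$. Hence
\[
 \frac{1}{\abs{K'}}\sum_{Q_i\subset K'}\abs{Q_i}
 \le\frac{1}{c_Ph\abs{K'}}\sum_{P_i\subset K}\abs{P_i}
 =\frac{H}{c_Ph}\,\Delta(\{P_i\},K).
\]
\item \emph{Denominator.} Similarly, $\Delta(\{Q_i\},B')\ge\sum_i\abs{P_i}/(C_Ph\abs{B'})=\frac{H}{C_Ph}\Delta(\{P_i\},B)$.
\item \emph{Conclusion.} In the ratio the factor $H/h$ cancels, leaving $(C_P/c_P)\,\Delta(\{P_i\},K)/\Delta(\{P_i\},B)$. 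This is at most $(C_P/c_P)\,C_F(\{P_i\},B)$.
\end{itemize}

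The only point needing care, and the expected obstacle, is the numerator step for \eqref{eq:fr-thick}. The supremum there is over $K\subset B$, not $K\subset W$, so one must intersect with $W$ in order to apply the weighted coefficient, which is defined only for test bodies inside $W$. This is harmless because $\abs{K\cap W}\le\abs K$ and the set of counted children can only grow, exactly as in the argument for \eqref{eq:absolute-from-relative}. One also has to dispose of the degenerate case in which no $P_i$ lies in $K$; there the left side is zero and there is nothing to prove.
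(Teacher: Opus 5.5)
Your proposal is correct and follows the paper's proof essentially step by step. Like the paper, you intersect the test body with $W$ to invoke the weighted coefficient, and you compare the total densities via $\abs B\le C_B\abs W$. You obtain \eqref{eq:fr-thick-elementary} from Lemma~\ref{lem:weighted-frostman-comparison}, and for \eqref{eq:fr-project} you test against $I\times K'$ so that the factor $H/h$ cancels.
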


\begin{proof}
Set
\[
 D_r=\frac{\sum_ir_i\abs{V_i}}{\abs W},
 \qquad D_P=\frac{\sum_i\abs{P_i}}{\abs B}.
\]
Then $D_P=D_r\abs W/\abs B\ge C_B^{-1}D_r$.  If
$P_i\subset K\subset B$, then $V_i\subset K\cap W$.  If the relevant sum is
nonzero, $K\cap W$ is a positive-volume convex body, and Definition
\ref{def:weighted-frostman} gives
\[
 \sum_{P_i\subset K}\abs{P_i}
 =\sum_{P_i\subset K}r_i\abs{V_i}
 \le C^{\rm wt}_{F,h}D_r\abs{K\cap W}
 \le C^{\rm wt}_{F,h}D_r\abs K.
\]
The zero-sum case is immediate.  Division by $D_P\abs K$ proves
\eqref{eq:fr-thick}; Lemma~\ref{lem:weighted-frostman-comparison} gives
\eqref{eq:fr-thick-elementary}.

For projection, $Q_i\subset K'\subset B'$ implies
$P_i\subset I\times K'$.  With $H=\abs I$,
\[
 \sum_{Q_i\subset K'}\abs{Q_i}
 \le\frac{C_F(\{P_i\},B)D_PH\abs{K'}}{c_Ph},
\qquad
 D_Q:=\frac{\sum_i\abs{Q_i}}{\abs{B'}}
 \ge\frac{D_PH}{C_Ph}.
\]
After division, the factors $H/h$ cancel exactly, proving
\eqref{eq:fr-project}.
\end{proof}

\subsection{The nonuniform local theorem}

We can now keep the full inflation profile in the local response.  The theorem below
is stated for a fixed base family so that its weighted coefficient and denominators
remain available uniformly after any later refinement.

\begin{theorem}[Intrinsic nonuniform local transfer]\label{thm:r4-local}
Let $W\subset\R^4$ have John semiaxes
$u_1\le u_2\le u_3\le u_4$, fix $c_0>0$, and put $h=c_0u_1$.
Let $\cV_W=(V_i)_{i\in J}$ be an arbitrary nonempty finite indexed family of
convex bodies $V_i\subset W$.  Write
$r_i=\abs{N_h(V_i)}/\abs{V_i}$ and
$\overline r=(\sum_i r_i\abs{V_i})/(\sum_i\abs{V_i})$.
For measurable $Y_i\subset V_i$, use
\eqref{eq:local-densities} and Definition~\ref{def:weighted-frostman}, and put
\[
 \delta_W=\frac{u_1}{u_4},\qquad
 \Lfac_W=2+\log(2+\#J)+\log(2+u_4/u_1).
\]
For every $\eps>0$ there are $K_\eps\ge2$ and
$A_{0,\eps}<\infty$ such that
\begin{equation}\label{eq:r4-local}
 \abs{N_h(\bigcup_iY_i)}
 \gtrsim_{\eps,c_0}
 \Lfac_W^{-A_{0,\eps}}\delta_W^\eps
 \bigl(C^{\rm wt}_{F,h}(\cV_W,W)\bigr)^{-1}
 \bigl(d^{\rm wt}_{W,h}(Y)\bigr)^{K_\eps}\abs W.
\end{equation}
The estimate is uniform over every measurable refinement of the fixed base family;
the quantities $r_i$, $\overline r$, $C^{\rm wt}_{F,h}$, and $T_W$ remain frozen.
\end{theorem}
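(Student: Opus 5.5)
The plan is to chain the four tools of Sections~\ref{sec:lift}--\ref{sec:local-r4}: thickening, projection along $e_1$, the indexed Frostman union bound Theorem~\ref{thm:lf3}, and the density-response lift Theorem~\ref{thm:lift}. Set $P_i=N_h(V_i)$ and place everything in the product box $B=I\times B'$ of \eqref{eq:parent-product-box}, with $I$ in the shortest John direction $e_1$. By \eqref{eq:product-box-scales}, $H=\abs I\asymp h$, $R\asymp u_4$ and $\abs B\asymp\abs W$, so $C_B=O_{c_0}(1)$. Put $Q_i=\pi(P_i)\subset B'$. Lemma~\ref{lem:pv}, using $\width_{e_1}(W)\le 8h/c_0$, gives $\abs{P_i}\asymp_{c_0}h\abs{Q_i}$ uniformly in the orientation of $V_i$. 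Each $Q_i$ contains an $h$-ball of $e_1^\perp$ and has diameter $O(R)$. Lemma~\ref{lem:fr-inheritance} then gives
\[
C_F(\{Q_i\},B')\lesssim_{c_0}C^{\rm wt}_{F,h}(\cV_W,W).
\]
Since $B'$ is built from the frozen base family, this bound holds for every refinement.

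Next I would turn the shadings into subsets of the product. Set $Y^h_i=N_h(Y_i)\cap P_i$, which is just $N_h(Y_i)$. Lemma~\ref{lem:thickening} gives $\abs{Y^h_i}\ge c_4 r_i\abs{Y_i}$. Then
\[
\frac{\sum_i\abs{Y_i^h}}{\sum_i\abs{P_i}}\gtrsim\frac{\sum_ir_i\abs{Y_i}}{\sum_ir_i\abs{V_i}}=d^{\rm wt}_{W,h}(Y).
\]
This is where the weights $r_i$ enter exactly. The sets $P_i$ are not literally products $I\times Q_i$, but $P_i\subset I\times Q_i$, and by the projection comparison $\abs{I\times Q_i}\asymp\abs{P_i}$. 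So I take $S_i=Y_i^h\subset I\times Q_i$. Its density relative to $HD$, where $D=\sum\abs{Q_i}$, is still $\gtrsim d^{\rm wt}_{W,h}(Y)$, and the loss is only a constant depending on $c_0$.

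Now apply Theorem~\ref{thm:lift} with $\Psi(t)=t^{K_\eps}$, which is convex with $\Psi(0)=0$. The hypothesis \eqref{eq:base-cu} comes from Theorem~\ref{thm:lf3} with
\[
A(\cQ)\asymp_\eps (h/R)^\eps L_N^{-1}J_h^{-1}C_F(\{Q_i\},B')^{-1}\frac{\abs{B'}}{D^{0}}.
\]
The bound \eqref{eq:lf3} is stated as $\rho^{K_\eps}\abs{B'}$, so I take $A(\cQ)=c_\eps(h/R)^\eps L_N^{-1}J_h^{-1}C_F^{-1}\abs{B'}$. The lift then yields
\[
\abs{N_h(\textstyle\bigcup_iY_i)}\ge\abs{\textstyle\bigcup_iS_i}\gtrsim H\abs{B'}(h/R)^\eps L_N^{-1}J_h^{-1}C_F^{-1}(c\,d^{\rm wt})^{K_\eps}.
\]
Here $H\abs{B'}=\abs B\asymp\abs W$ and $(h/R)^\eps\asymp\delta_W^\eps$. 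Also $L_N\lesssim\Lfac_W$ and $J_h\lesssim\Lfac_W^3$, so $A_{0,\eps}=4$ suffices. The constant $c^{K_\eps}$ is absorbed into $\gtrsim_{\eps,c_0}$. If needed, increase $K_\eps$ to make $K_\eps\ge2$; this is harmless because $d^{\rm wt}\le1$.

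The main obstacle I expect is the monotonicity mismatch in the density ratio. The response $\Psi$ must be applied at the density of the $S_i$ inside $I\times Q_i$, and that density is only comparable to the density inside $P_i$, not equal to it. Monotonicity of $t^{K_\eps}$ handles this, at a cost of a constant to the power $K_\eps$. A second point to check is that Theorem~\ref{thm:lf3} requires the $Q_i$ to lie in $B'$ and to satisfy the Frostman bound relative to $B'$, not relative to $\pi(W)$; choosing $B$ to contain $N_h(W)$ gives exactly this. Uniformity over refinements is then automatic, because every denominator ($D$, $C_F$, $\abs B$) is computed from the base family alone.
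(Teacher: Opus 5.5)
Your proposal is correct and follows essentially the same route as the paper. You thicken via Lemma~\ref{lem:thickening}, project along the shortest John direction inside the product box, and transfer the weighted Frostman bound by Lemma~\ref{lem:fr-inheritance}; you then feed Theorem~\ref{thm:lf3} into the lift Theorem~\ref{thm:lift} with $\Psi(t)=t^{K_\eps}$, and convert the lifted density to $d^{\rm wt}_{W,h}(Y)$ using $\abs{P_i}\asymp_{c_0}h\abs{Q_i}$ (the stray factor $\abs{B'}/D^{0}$ in your first display of $A(\cQ)$ is only a slip, and your corrected choice $A(\cQ)=c_\eps(h/R)^\eps L_N^{-1}J_h^{-1}C_F^{-1}\abs{B'}$ is exactly what is needed).
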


\begin{proof}
Set $P_i=N_h(V_i)$ and $S_i=N_h(Y_i)$.  For a finite family,
\begin{equation}\label{eq:union-neighborhood-identity}
 \bigcup_iS_i=N_h(\bigcup_iY_i).
\end{equation}
Lemma~\ref{lem:thickening} gives
\[
 \sum_i\abs{S_i}\gtrsim\sum_ir_i\abs{Y_i}.
\]

Use the shortest-direction product box \eqref{eq:parent-product-box}, let $\pi$ be
projection onto $e_1^\perp$, and put $Q_i=\pi(P_i)$.  Then
$S_i\subset P_i\subset I\times Q_i$ and
\begin{equation}\label{eq:pi-volume}
 \abs{P_i}\asymp_{c_0}h\abs{Q_i}.
\end{equation}
Equations \eqref{eq:fr-thick} and \eqref{eq:fr-project} give
\begin{equation}\label{eq:projected-frostman}
 C_F(\{Q_i\},B')\lesssim_{c_0}
 C^{\rm wt}_{F,h}(\cV_W,W).
\end{equation}
Each $Q_i$ contains a three-ball of radius $h$ and has diameter $O(R)$.

Apply Theorem~\ref{thm:lf3} to arbitrary measurable $E_i\subset Q_i$.  If its
exponent is less than $2$, replace it by $2$: since its density lies in $[0,1]$,
this only weakens the lower bound.  The explicit cardinality and projected-axis
losses are bounded below by $\Lfac_W^{-A_{0,\eps}}$.  Theorem~\ref{thm:lift},
with $\Psi(t)=t^{K_\eps}$, then gives
\begin{equation}\label{eq:local-lifted}
 \abs{\bigcup_iS_i}\gtrsim_{\eps,c_0}
 H\abs{B'}\,\Lfac_W^{-A_{0,\eps}}
 \left(\frac hR\right)^\eps
 \bigl(C^{\rm wt}_{F,h}\bigr)^{-1}
 \left(\frac{\sum_i\abs{S_i}}
 {H\sum_i\abs{Q_i}}\right)^{K_\eps}.
\end{equation}
By \eqref{eq:pi-volume} and $H\asymp h$,
\[
 \frac{\sum_i\abs{S_i}}{H\sum_i\abs{Q_i}}
 \gtrsim
 \frac{\sum_ir_i\abs{Y_i}}{\sum_ir_i\abs{V_i}}
 =d^{\rm wt}_{W,h}(Y).
\]
Finally, $H\abs{B'}\asymp\abs W$ and $h/R\asymp u_1/u_4$ by
\eqref{eq:product-box-scales}; use \eqref{eq:union-neighborhood-identity}.
Since every $Q_i$ contains an $h$-ball and lies in a box of longest side
$O(u_4)$, the dyadic axis ranges are controlled by $u_4/h$.  This gives the stated
local logarithm without introducing a child minimum scale.
\end{proof}

\begin{corollary}[Unweighted local efficiencies]\label{cor:r4-unweighted}
In the setting of Theorem~\ref{thm:r4-local},
\begin{equation}\label{eq:r4-local-kappa}
 \abs{N_h(\bigcup_iY_i)}
 \gtrsim_{\eps,c_0}
 \Lfac_W^{-A_{0,\eps}}\delta_W^\eps
 \kappa_W^{(K_\eps)}d_W(Y)^{K_\eps}\abs W.
\end{equation}
If $C_F(\cV_W,W)\le C$, then
\begin{equation}\label{eq:r4-local-eta}
 \abs{N_h(\bigcup_iY_i)}
 \gtrsim_{\eps,c_0}
 \Lfac_W^{-A_{0,\eps}}\delta_W^\eps
 C^{-1}\eta_W^{(K_\eps)}d_W(Y)^{K_\eps}\abs W.
\end{equation}
\end{corollary}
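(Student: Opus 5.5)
The corollary follows from Theorem~\ref{thm:r4-local} by two substitutions. First we replace the weighted density by the unweighted one. Then we replace the weighted Frostman coefficient by the ordinary one.

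For \eqref{eq:r4-local-kappa}, start from \eqref{eq:r4-local}. By \eqref{eq:weighted-unweighted-density},
\[
 d^{\rm wt}_{W,h}(Y)\ge\frac{r_W^-}{\overline r_W}\,d_W(Y).
\]
Both sides lie in $[0,1]$ and $K_\eps\ge2>0$, so raising to the power $K_\eps$ preserves the inequality:
\[
 \bigl(d^{\rm wt}_{W,h}(Y)\bigr)^{K_\eps}\ge
 \left(\frac{r_W^-}{\overline r_W}\right)^{K_\eps}d_W(Y)^{K_\eps}.
\]
Substituting this into \eqref{eq:r4-local}, the factor $(C^{\rm wt}_{F,h})^{-1}(r_W^-/\overline r_W)^{K_\eps}$ is exactly $\kappa_W^{(K_\eps)}$ by \eqref{eq:kappa-def}. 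The base-family quantities $r_i$, $r_W^\pm$, $\overline r_W$ and $C^{\rm wt}_{F,h}$ are frozen, so the bound holds uniformly for every measurable refinement $Y$, just as in the theorem.

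For \eqref{eq:r4-local-eta}, additionally invoke Lemma~\ref{lem:weighted-frostman-comparison}. Under $C_F(\cV_W,W)\le C$ it gives
\[
 \bigl(C^{\rm wt}_{F,h}\bigr)^{-1}\ge C^{-1}\,\frac{\overline r_W}{r_W^+}.
\]
Therefore
\[
 \kappa_W^{(K_\eps)}\ge C^{-1}\frac{\overline r_W}{r_W^+}\left(\frac{r_W^-}{\overline r_W}\right)^{K_\eps}=C^{-1}\eta_W^{(K_\eps)},
\]
using \eqref{eq:eta-def}. Inserting this into \eqref{eq:r4-local-kappa} gives \eqref{eq:r4-local-eta}.

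In this corollary we take $r_W(V)$ to be computed at the same radius $h=c_0u_1$ as in the theorem, so that the notation of \eqref{eq:inflation-statistics} matches $r_i$ and $\overline r$. No step is a genuine obstacle: the constants $K_\eps$ and $A_{0,\eps}$ are inherited unchanged, and the only point to check is that the powers are taken of nonnegative quantities. The case $d_W(Y)=0$ is trivial.
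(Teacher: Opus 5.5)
Your proposal is correct and follows the paper's proof. You raise \eqref{eq:weighted-unweighted-density} to the power $K_\eps$ and insert it into \eqref{eq:r4-local}, which gives the $\kappa_W^{(K_\eps)}$ bound; then Lemma~\ref{lem:weighted-frostman-comparison} yields $\kappa_W^{(K_\eps)}\ge C^{-1}\eta_W^{(K_\eps)}$.
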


\begin{proof}
Insert \eqref{eq:weighted-unweighted-density} into
\eqref{eq:r4-local} to obtain \eqref{eq:r4-local-kappa}.  Lemma
\ref{lem:weighted-frostman-comparison} gives
\[
 \bigl(C^{\rm wt}_{F,h}\bigr)^{-1}
 \left(\frac{r_W^-}{\overline r_W}\right)^{K_\eps}
 \ge
 C^{-1}\frac{\overline r_W}{r_W^+}
 \left(\frac{r_W^-}{\overline r_W}\right)^{K_\eps},
\]
which is \eqref{eq:r4-local-eta}.
\end{proof}

The two factors in the elementary coefficient have different origins.  Weighted
Frostman inheritance contributes $\overline r_W/r_W^+$ once, whereas conversion from
the original shaded density to the weighted density contributes
$r_W^-/\overline r_W$ with the response exponent $K_\eps$.  Keeping these mechanisms
separate yields the mean-normalized expression in \eqref{eq:r4-local-eta}.

\section{Weighted profile assembly and cellular factoring}\label{sec:cellular}

The local theorem supplies a response for each parent, but the child loads $D_W$ and
the local efficiencies $\alpha_W$ need not be comparable.  Selecting a common load
level would change the coarse normalization of the main theorem.  The weighted
H\"older inequality below instead measures the cost of assembling all parents in a
single explicit coefficient.  The cellular construction then arranges that the
local estimates, fine multiplicities, and coarse multiplicity all refer to one
refinement.

\subsection{The weighted profile inequality}

The coefficient $\mathfrak P_K$ compares the global body-mass-weighted shaded
density with the sum of the parentwise responses $\alpha_Wd_W^K$.  Its denominator
penalizes precisely those parents for which a large load is paired with a weak local
efficiency.

\begin{lemma}[Profile H\"older inequality]\label{lem:profile-holder}
Let $K>1$ and $q=K/(K-1)$.  On a finite index set, suppose
$p_W>0$, $\sum_Wp_W=1$, $D_W>0$, $\alpha_W>0$, and $d_W\ge0$.
Then
\begin{equation}\label{eq:profile-holder}
 \sum_Wp_W\alpha_Wd_W^K
 \ge
 \mathfrak P_K(\mathbf D,\boldsymbol\alpha;\mathbf p)
 \left(
  \frac{\sum_Wp_WD_Wd_W}{\sum_Wp_WD_W}
 \right)^K.
\end{equation}
Moreover,
\begin{equation}\label{eq:profile-basic-bounds}
 0<\mathfrak P_K\le\max_W\alpha_W.
\end{equation}
If $D_{\max}/D_{\min}\le\Lambda_D$ and $\alpha_W\ge\alpha_0$, then
\begin{equation}\label{eq:profile-balanced}
 \mathfrak P_K\ge\Lambda_D^{-K}\alpha_0.
\end{equation}
If $D_W\equiv D_0$ and $\alpha_W\equiv\alpha_0$, then
$\mathfrak P_K=\alpha_0$.
\end{lemma}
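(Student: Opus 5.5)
The core is a single application of Hölder's inequality with exponents $K$ and $q=K/(K-1)$. I would write
\[
 p_WD_Wd_W=\bigl(p_W^{1/K}\alpha_W^{1/K}d_W\bigr)\cdot
 \bigl(p_W^{1/q}D_W\alpha_W^{-1/K}\bigr).
\]
Hölder then gives
\[
 \sum_Wp_WD_Wd_W\le
 \Bigl(\sum_Wp_W\alpha_Wd_W^K\Bigr)^{1/K}
 \Bigl(\sum_Wp_WD_W^{q}\alpha_W^{-q/K}\Bigr)^{1/q}.
\]
Since $q/K=1/(K-1)$, the second factor is exactly the base of the denominator of $\mathfrak P_K$. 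Raising to the $K$-th power gives $K/q=K-1$ as the outer exponent, and dividing by $(\sum_Wp_WD_W)^K$ rearranges the result to \eqref{eq:profile-holder}. The case $d_W\equiv0$ is trivial, and all quantities are positive, so no division issues arise.

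For \eqref{eq:profile-basic-bounds}, positivity is clear since every term is positive. For the upper bound, I would apply \eqref{eq:profile-holder} with $d_W\equiv1$: the left side becomes $\sum_Wp_W\alpha_W\le\max_W\alpha_W$, and the bracket on the right equals $1$.

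For \eqref{eq:profile-balanced}, I would bound $\alpha_W^{-1/(K-1)}\le\alpha_0^{-1/(K-1)}$ and $D_W^{q}\le D_{\max}^{q}$. The denominator is then at most $D_{\max}^{K}\alpha_0^{-1}$, while the numerator is at least $D_{\min}^K$. The ratio is therefore at least $(D_{\min}/D_{\max})^K\alpha_0\ge\Lambda_D^{-K}\alpha_0$. In the equal case, the numerator is $D_0^K$ and the denominator is $(D_0^q\alpha_0^{-1/(K-1)})^{K-1}=D_0^K\alpha_0^{-1}$, so $\mathfrak P_K=\alpha_0$.

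The only step requiring care is the exponent bookkeeping in the Hölder splitting, specifically checking that $p_W$ distributes as $p_W^{1/K}p_W^{1/q}$ and that the $\alpha_W$ powers match $\alpha_W^{-1/(K-1)}$ after raising to $q$. I do not expect a genuine obstacle.
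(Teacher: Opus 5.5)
Your proposal is correct and follows the paper's proof: the same H\"older splitting with exponents $K$ and $q$, and the same direct estimates for \eqref{eq:profile-balanced} and the constant profile. The only variation is the upper bound in \eqref{eq:profile-basic-bounds}. You specialize the proved inequality to $d_W\equiv1$, which even gives the sharper $\mathfrak P_K\le\sum_Wp_W\alpha_W$, whereas the paper bounds the denominator from below using $\alpha_W\le\max_W\alpha_W$ and $\sum_Wp_WD_W^q\ge(\sum_Wp_WD_W)^q$.
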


\begin{proof}
Write
\[
 \sum_Wp_WD_Wd_W
 =
 \sum_Wp_W
 (\alpha_W^{1/K}d_W)(D_W\alpha_W^{-1/K}).
\]
H\"older gives
\[
 \sum_Wp_WD_Wd_W
 \le
 \left(\sum_Wp_W\alpha_Wd_W^K\right)^{1/K}
 \left(\sum_Wp_WD_W^q
              \alpha_W^{-1/(K-1)}\right)^{1/q}.
\]
Rearranging gives \eqref{eq:profile-holder}.

Let $\alpha_*=\max_W\alpha_W$.  Since
$\alpha_W^{-1/(K-1)}\ge\alpha_*^{-1/(K-1)}$ and
$\sum p_WD_W^q\ge(\sum p_WD_W)^q$, the denominator in
\eqref{eq:profile-def} is at least
$\alpha_*^{-1}(\sum p_WD_W)^K$, and hence
\eqref{eq:profile-basic-bounds}; in particular $\mathfrak P_K\le1$ when
$\alpha_W\le1$.  Under the balanced hypotheses, the numerator is at least
$D_{\min}^K$, while the denominator is at most
$D_{\max}^K\alpha_0^{-1}$, proving \eqref{eq:profile-balanced}.  Direct
substitution gives the constant-profile identity.
\end{proof}

\begin{remark}[Equality and optimal scalar coefficient]\label{rem:profile-equality}
Equality in \eqref{eq:profile-holder} holds precisely, on the positive indices, when
\begin{align*}
 \alpha_Wd_W^{K-1}&\ \text{is proportional to}\ D_W,\\
 d_W&\ \text{is proportional to}\
 \left(\frac{D_W}{\alpha_W}\right)^{1/(K-1)}.
\end{align*}
Thus $\mathfrak P_K$ is the optimal coefficient for this scalar H\"older assembly
inequality.  This optimality concerns the assembly step; the geometric argument may
carry additional losses.
\end{remark}

\subsection{A common cellular refinement}

We next isolate the global construction.  The input is a family of parentwise local
responses whose coefficients were fixed before refinement.  Four mass-weighted
restrictions are used: the first two regularize the inner and global fine
multiplicities, the third regularizes the indexed number of active parents in a
cell, and the fourth regularizes the fine mass of a cell.  A null cleanup between
the second and third restrictions makes positive-measure cell activity compatible
with pointwise fibres.  Since every later restriction is by a common union of whole
half-open cells, it preserves the multiplicity information obtained earlier.

Let $\cV=\disjunion_{W\in\cW}\cV_W$ be finite indexed families of bounded bodies
in $\R^4$, with $Y(V)\subset V\subset W$.  Assume every parent group has positive
initial shaded mass; initial zero-mass groups may be removed before invoking the
theorem.  Write
\[
 M_0=\sum_{V\in\cV}\abs{Y(V)},\qquad
 T_0=\sum_{V\in\cV}\abs V,\qquad
 T_W=\sum_{V\in\cV_W}\abs V,\qquad
 D_W=\frac{T_W}{\abs W}.
\]
The $T_W$ and $D_W$ remain frozen through every refinement.  Suppose the parent
volumes and ordered John semiaxes are uniformly comparable to fixed scales
$w_1\le\cdots\le w_4$.  Put
\[
 \ell=h=w_1/100,\qquad W^\sharp=N_{w_1}(W),
\]
and assume $\abs{W^\sharp}\asymp\abs W$.

For each parent fix, before any refinement, a number $\alpha_W\in(0,1]$.
Assume that for every measurable refinement and every parent with positive refined
mass,
\begin{equation}\label{eq:cellular-local-input}
 \abs{N_h(U'_W)}
 \ge\Gamma\alpha_Wd_W^K\abs W,\qquad
 d_W=\frac{\sum_{V\in\cV_W}\abs{Y'(V)}}{T_W},
 \qquad K>1,
\end{equation}
with common $\Gamma>0$ and $K$.  Let $N=\#\cV$, $P=\#\cW$, and
\begin{equation}\label{eq:cell-log}
 \mathcal G=
 2+\log(2+N)+\log(2+P)+\log(2+w_4/w_1).
\end{equation}

\begin{lemma}[Grid-neighbor covering]\label{lem:grid-neighbor-covering}
Let $\mathcal Q_\ell$ be the half-open grid of side $\ell$ in $\R^d$, and let
$U$ be measurable such that the grid cube containing every point of $U$ has
positive-measure intersection with $U$.  If
\[
 Z_\ell(U)=\bigcup\{Q\in\mathcal Q_\ell:\abs{U\cap Q}>0\},
\]
then
\begin{equation}\label{eq:grid-covering-lemma}
 \abs{N_h(U)}\le C_d(1+h/\ell)^d\abs{Z_\ell(U)},
\end{equation}
and $Z_\ell(U)\subset N_{\sqrt d\,\ell}(U)$.
\end{lemma}

\begin{proof}
If $y\in N_h(U)$, choose $x\in U$ with $\abs{x-y}<2h$.  The lattice indices of
their half-open grid cubes differ in each coordinate by at most
$2+\lceil2h/\ell\rceil$.  The cube of $x$ is active, so the cube of $y$ lies among
at most $C_d(1+h/\ell)^d$ neighbors of an active cube.  Disjointness and equal
grid volume prove the first assertion.  If $Q$ is active, every point of $Q$ is
within $\sqrt d\,\ell$ of a point of $U\cap Q$.
\end{proof}

\begin{theorem}[Profile cellular factoring]\label{thm:cellular}
Under the preceding hypotheses, there are a single refinement $Y'$, coarse shadings
$Z(W^\sharp)$ for every parent index, a nonempty positive final-mass set
\[
 \cW_+=\{W:\ M'_W=\sum_{V\in\cV_W}\abs{Y'(V)}>0\},
\]
positive dyadic $a,b,c,M$, and $\sigma=M/c>0$ such that
$Z(W^\sharp)=\varnothing$ for $W\notin\cW_+$ and
\begin{equation}\label{eq:cell-mass}
 \sum_V\abs{Y'(V)}\gtrsim\mathcal G^{-4}M_0.
\end{equation}
The containment \eqref{eq:main-compat}, the pointwise conclusions
\eqref{eq:main-inner}--\eqref{eq:main-coarse-mult}, the fibre equality
\eqref{eq:main-fibre}, and local uniformity
\eqref{eq:main-local-uniformity} hold with the full frozen coarse family
$\cW^\sharp$.  For every $W\in\cW_+$,
\begin{equation}\label{eq:cell-product}
 \mu(\cV,Y)\lesssim\mathcal G^4
 \mu(\cW^\sharp,Z)\mu(\cV_W,Y'),
\end{equation}
and
\begin{equation}\label{eq:cell-coarse-density}
 \lambda(\cW^\sharp,Z)
 \gtrsim
 \Gamma\mathcal G^{-4K}
 \mathfrak P_K(\mathbf D,\boldsymbol\alpha;\mathbf p)
 \lambda(\cV,Y)^K,
\end{equation}
where
\[
 p_W=\frac{\abs W}{\sum_{W'}\abs{W'}}.
\]
\end{theorem}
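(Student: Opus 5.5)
The proof is a sequence of four dyadic pigeonholing steps. Each keeps a $\mathcal G^{-1}$ fraction of the current shaded mass, and every step after the second restricts all children by a common union of half-open $\ell$-cells. Weighted H\"older (Lemma~\ref{lem:profile-holder}) and the grid covering (Lemma~\ref{lem:grid-neighbor-covering}) then convert the local inputs \eqref{eq:cellular-local-input} into the coarse density.

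\textbf{Step 1: inner multiplicity.} Decompose by the dyadic value $a$ of $m_{\cV_W,Y}(x)$ and set $Y_1(V)=Y(V)\cap\{a\le m_{\cV_W,Y}<2a\}$, where $W$ is the parent of $V$. The number of relevant values of $a$ is $O(\log N)$, so some $a$ retains a $\mathcal G^{-1}$ fraction of $M_0$. Within a parent the inner multiplicity of the restricted family is still in $[a,2a)$, because each point's fibre in its own parent is either kept entirely or removed entirely.

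\textbf{Step 2: global multiplicity.} Pigeonhole in the same way on $c$ with $m_{\cV,Y_1}$. Again whole fibres are kept or removed, so both the global and the inner bands survive.

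\textbf{Step 3: null cleanup.} Remove from every $Y_2(V)$ the points lying in grid cells $Q$ with $\abs{Y_2(V)\cap Q}=0$. Only finitely many cells and bodies are involved, so this deletes a null set, and afterwards every point of $Y_2(V)$ lies in an active cell for $V$. This deletion changes multiplicities only on a null set. To keep the bands pointwise exact, I would also delete the null set where the band fails, which does not change the activity of any cell.

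\textbf{Step 4: active-parent count.} For each cell $Q$ let $\beta(Q)=\#\{W:\abs{U_W\cap Q}>0\}$, counted with indices. Pigeonhole on the dyadic value $b$ of $\beta$, weighting each cell by its mass, and restrict to the union of the selected cells.

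\textbf{Step 5: cell mass.} Pigeonhole on the dyadic fine mass $M$ of a cell, that is, $\sum_V\abs{Y(V)\cap Q}\in[M,2M)$, with $O(\log)$ relevant levels after discarding negligible tails. Restricting to whole cells preserves all earlier bands. Set $Z(W^\sharp)$ to be the union of surviving cells $Q$ with $\abs{U'_W\cap Q}>0$; this set lies in $W^\sharp$ because $\sqrt4\ell<w_1$.

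\textbf{Pointwise conclusions.} The bound $m_Z\in[b,2b)$ holds because $\beta$ is constant on the cells. The fibre identity \eqref{eq:main-fibre} follows from the cleanup: $x\in Y'(V)$ forces the cell of $x$ to be active for the parent of $V$. The containment $U'_W\subset Z\subset N_{w_1}(U'_W)$ is Lemma~\ref{lem:grid-neighbor-covering}. For local uniformity, $\abs{U(\cV,Y')\cap Q}\asymp M/c$ for each active cell, and a ball of radius $w_1$ contains or meets $O(1)$ cells in the relevant sense. Here I would rather use that $M/c$ is the union mass per active cell and that $w_1=100\ell$, giving $\asymp(w_1/\ell)^4$ cells.

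\textbf{Product estimate.} Count the mass per cell. On an active cell, the fine mass is about $c\,\abs{U\cap Q}$, while the per-parent mass is about $a\,\abs{U'_W\cap Q}$. Combined with the coarse multiplicity $b$, this gives $c\lesssim ab$ up to the losses, and hence $\mu(\cV,Y)\lesssim\mathcal G^4\mu(\cV,Y')\lesssim\mathcal G^4\,b\,a$. For any $W\in\cW_+$ we have $\mu(\cV_W,Y')\asymp a$ and $\mu(\cW^\sharp,Z)\asymp b$. The needed inequality $c\le 2ab$ is pointwise from the fibre identity, since $c\le m_{\cV,Y'}(x)=\sum_{W:x\in Z}m_{\cV_W,Y'}(x)<2b\cdot 2a$.

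\textbf{Coarse density.} For each $W\in\cW_+$, Lemma~\ref{lem:grid-neighbor-covering} and the input \eqref{eq:cellular-local-input} give
\[
\abs{Z(W^\sharp)}\gtrsim\abs{N_h(U'_W)}\ge\Gamma\alpha_Wd_W^K\abs W .
\]
Parents with $d_W=0$ contribute zero on both sides. Summing over parents and dividing by $\sum\abs{W^\sharp}\asymp\sum\abs W$ gives $\lambda(\cW^\sharp,Z)\gtrsim\Gamma\sum_Wp_W\alpha_Wd_W^K$. Lemma~\ref{lem:profile-holder} then bounds this below by $\mathfrak P_K$ times
\[
\Bigl(\frac{\sum p_WD_Wd_W}{\sum p_WD_W}\Bigr)^K=\lambda(\cV,Y')^K\ge\bigl(\mathcal G^{-4}\lambda(\cV,Y)\bigr)^K .
\]
This uses \eqref{eq:cell-mass} and the frozen denominators, since $p_WD_W\propto T_W$.

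\textbf{Main obstacle.} The delicate step is the bookkeeping in Steps 3--5. One must ensure that the pigeonholing on cell mass and on parent count retains mass $\gtrsim\mathcal G^{-1}$ with only logarithmically many levels. This requires discarding cells whose mass is below $M_0/(\text{number of cells})$, and the number of cells must be bounded polynomially in $N,P,w_4/w_1$. Parent volumes are comparable and $\ell=w_1/100$, so each parent meets $O((w_4/w_1)^3)$ cells and the total number of cells is $\lesssim P(w_4/w_1)^3$. This bound controls the number of levels.
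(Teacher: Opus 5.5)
Your proposal is correct and follows the paper's proof essentially step for step. It uses the same four mass-weighted selections ($a$, $c$, $b$, $M$) with a null cleanup between the second and third, the coarse shading $Z(W^\sharp)=Z_\ell(U'_W)$, the pointwise bound $c<4ab$ from the fibre identity, and Lemma~\ref{lem:grid-neighbor-covering} together with Lemma~\ref{lem:profile-holder} for the density; your per-child cleanup followed by deleting the band-failure null set from every child is equivalent to the paper's single common null set $E$. One small correction: the cell-mass cutoff must be taken relative to the current mass $A$, as in the paper's $M_Q<A/(2J)$, because a threshold $M_0/J$ could discard all of the surviving mass $A\approx\mathcal G^{-3}M_0$.
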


\begin{proof}
We construct the refinement through four nested mass-weighted selections.  The first
two act directly on the fine multiplicities.  After removing null cell pieces, the
third records how many parent indices are active in a cell, and the fourth makes the
fine mass comparable across the retained cells.  Each selection is charged to the
current fine shaded mass, while the common restrictions preserve all uniformities
already obtained.

\smallskip
\noindent\emph{Inner multiplicity.}
For the current shadings define
\[
 m_W(x)=\sum_{V\in\cV_W}\ind_{Y(V)}(x).
\]
The identity $\sum_W\int m_W=\sum_V\abs{Y(V)}$ and
$1\le m_W\le N$ on the fine union permit a dyadic choice of $a$ carrying a
logarithmic fraction of the fine mass.  Intersecting every shading in a parent with
$\{a\le m_W<2a\}$ makes the inner multiplicity uniform on the surviving parent
union.

\smallskip
\noindent\emph{Global fine multiplicity.}
For the refined shading let $m=\sum_Wm_W$.  Select one dyadic $c$ by fine mass
and intersect every shading with the common set $\{c\le m<2c\}$.  Because the same
set is used for every child, the inner count obtained in the preceding step remains
valid at every surviving point.

\smallskip
\noindent\emph{Positive-measure cell activity.}
Let $U_W^{(2)}$ be the parent union after these selections and partition $\R^4$
into half-open cubes $Q$ of side $\ell$.  Remove from every child shading the same
null set
\begin{equation}\label{eq:null-cleanup}
 E=\bigcup_{\substack{W,Q\\\abs{U_W^{(2)}\cap Q}=0}}
       (U_W^{(2)}\cap Q).
\end{equation}
Only finitely many cells meet the bounded parents, so $E$ is null and the previous
mass and multiplicity conclusions are unchanged.  After removing it from every
child, any surviving fine point makes its assigned parent active in its cell in the
positive-measure sense required by the grid argument.

\smallskip
\noindent\emph{Indexed active-parent multiplicity.}
Call $W$ active in $Q$ if $\abs{U_W^{(2)}\cap Q}>0$, and let $b_Q$ be the
number of active parent indices.  On every positive-fine-mass cell,
$1\le b_Q\le P$.  Select a dyadic $b$ by the fine cell masses
\[
 M_Q=\sum_V\abs{Y(V)\cap Q},
\]
and retain the cells with $b\le b_Q<2b$.  This whole-cell restriction preserves the
two fine multiplicity ranges and makes the indexed coarse count uniform on every
retained cell.

\smallskip
\noindent\emph{Cell fine mass.}
Let $A=\sum_QM_Q>0$ after the third selection and let $J$ be the number of cells
with $M_Q>0$.  Discard cells with
\begin{equation}\label{eq:relative-cell-cutoff}
 M_Q<\frac{A}{2J}.
\end{equation}
This loses less than $A/2$.  The remaining positive masses have ratio at most
$2J$, so one dyadic level
\begin{equation}\label{eq:M-level}
 M\le M_Q<2M
\end{equation}
carries a logarithmic fraction of the mass.  Intersect every child shading with
the same union of selected whole cells.  The final cell masses are therefore
comparable without disturbing any of the three multiplicity ranges.

The number of cells satisfies
\begin{equation}\label{eq:J-bound}
 J\lesssim P\prod_{j=1}^4(1+w_j/w_1).
\end{equation}
Indeed, a cube meeting $W$ lies in $N_{2\ell}(W)$, and disjointness of the grid
and the John-box estimate give
\[
 \#\{Q:Q\cap W\ne\varnothing\}\ell^4
 \le\abs{N_{2\ell}(W)}
 \lesssim\prod_{j=1}^4(w_j+\ell).
\]
Thus each of the four selections costs at most a factor controlled by
$\mathcal G$, which gives \eqref{eq:cell-mass}.  The construction has used the
pointwise and cellular quantities, while retaining the complete range of parent
loads for the profile assembly.

Let $\cW_+$ be the parents with positive final mass.  For every parent, including
those outside $\cW_+$, define
\begin{equation}\label{eq:coarse-shading-definition}
 Z(W^\sharp)=
 \bigcup\{Q:Q\text{ is finally selected and }\abs{U'_W\cap Q}>0\}.
\end{equation}
This is empty for $W\notin\cW_+$.  If a parent was active in a finally selected
cell, the whole-cell restriction preserves its positive-measure fine piece, so it
belongs to $\cW_+$.  Consequently the coarse multiplicity on a selected cell is
the previously selected $b_Q\in[b,2b)$.  Parents outside $\cW_+$ contribute empty
coarse shadings but remain in the fixed denominator.  Since a grid cube has diameter
$2\ell<w_1$,
\[
 U'_W\subset Z(W^\sharp)
 \subset N_{w_1}(U'_W)\cap W^\sharp.
\]
The common whole-cell restrictions preserve the inner and global counts.

At a final fine point, let $p(x)$ count assigned parent indices contributing a child.
Then
\[
 c\le m_{\cV,Y'}(x)<2a\,p(x),\qquad p(x)\le b_Q<2b,
\]
and hence
\begin{equation}\label{eq:c-ab}
 c<4ab.
\end{equation}
For $W\in\cW_+$ and for the full coarse family,
\[
 a\le\mu(\cV_W,Y')<2a,\qquad
 b\le\mu(\cW^\sharp,Z)<2b.
\]
Writing $M'=\sum_V\abs{Y'(V)}$ and $U'=U(\cV,Y')$, we have
\[
 \mu(\cV,Y)
 \le\frac{M_0}{\abs{U'}}
 \lesssim\mathcal G^4\frac{M'}{\abs{U'}}
 <2\mathcal G^4c.
\]
Together with \eqref{eq:c-ab}, this proves \eqref{eq:cell-product}.

Unique half-open cell ownership, the global null cleanup, and the disjoint assigned
index union give
\[
 (\cV)_{Y'}(x)=
 \disjunion_{\substack{W\in\cW\\x\in Z(W^\sharp)}}
 (\cV_W)_{Y'}(x).
\]
On a selected cell,
\[
 c\abs{U'\cap Q}\le M_Q<2c\abs{U'\cap Q},
 \qquad M\le M_Q<2M,
\]
so $\abs{U'\cap Q}\asymp\sigma=M/c$.  A ball of radius $w_1$ centered
in the coarse union contains its center cell and meets only $O(1)$ grid cells,
proving local uniformity.

For the density conclusion, the null cleanup gives the pointwise hypothesis of
Lemma~\ref{lem:grid-neighbor-covering}, and
$Z(W^\sharp)=Z_\ell(U'_W)$.  Since $h=\ell$,
\begin{equation}\label{eq:grid-neighbor}
 \abs{Z(W^\sharp)}\gtrsim\abs{N_h(U'_W)}.
\end{equation}
Put $d_W=0$ for $W\notin\cW_+$.  The local input, summation, and
$\abs{W^\sharp}\asymp\abs W$ give
\begin{equation}\label{eq:profile-density-sum}
 \lambda(\cW^\sharp,Z)
 \gtrsim\Gamma\sum_Wp_W\alpha_Wd_W^K.
\end{equation}
Moreover,
\[
 \sum_Wp_WD_Wd_W
 =\frac{M'}{\sum_W\abs W},
 \qquad
 \sum_Wp_WD_W
 =\frac{T_0}{\sum_W\abs W}.
\]
Thus
\begin{equation}\label{eq:profile-density-normalization}
 \frac{\sum_Wp_WD_Wd_W}{\sum_Wp_WD_W}
 =\frac{M'}{T_0}
 \gtrsim\mathcal G^{-4}\lambda(\cV,Y).
\end{equation}
Apply Lemma~\ref{lem:profile-holder} to
\eqref{eq:profile-density-sum} and use
\eqref{eq:profile-density-normalization}, which yields
\eqref{eq:cell-coarse-density}; the four-selection loss is raised to $K$.
\end{proof}

\subsection{From the local response to the main theorem}\label{subsec:assembly}

It remains to match the local geometric response from Section~\ref{sec:local-r4}
with the abstract cellular theorem.  The initial reduction to $\cW_0$ provides the
positive-mass hypothesis, parent comparability makes the local constants uniform,
and the fixed local efficiencies become the coefficients in the H\"older profile.

\begin{proof}[Proof of Theorem~\ref{thm:main}]
Restrict first to the parent groups in $\cW_0$ from
\eqref{eq:initial-parent-support}.  Every discarded input shading is null; finiteness
therefore preserves both the shaded mass and the measure of the fine union.
Let $\cV_0$ be the remaining assigned child family and set
\[
 \lambda_0=\frac{\sum_{V\in\cV_0}\abs{Y(V)}}
                  {\sum_{V\in\cV_0}\abs V}.
\]
Then $\lambda_0\ge\lambda$.

Fix $\ell=h=w_1/100$.  For every $W\in\cW_0$,
\[
 \frac{h}{a_1(W)}
 \in\left[\frac1{100\Lambda_W},\frac{\Lambda_W}{100}\right],
 \qquad
 \frac{a_1(W)}{a_4(W)}
 \asymp_{\Lambda_W}\frac{w_1}{w_4}.
\]
The local logarithm is bounded by $\Lfac$, uniformly in $W$.  Corollary
\ref{cor:r4-unweighted}, in its intrinsic form, supplies
\eqref{eq:cellular-local-input} for every later measurable refinement with
\[
 \Gamma\gtrsim_{\eps,\Lambda_W}
 \Lfac^{-A_{0,\eps}}\left(\frac{w_1}{w_4}\right)^\eps,
 \qquad
 \alpha_W=\kappa_W^{(K_\eps)},\qquad K_\eps\ge2.
\]
The coefficient $\alpha_W\in(0,1]$ is determined before the cellular refinement.
Lemma~\ref{lem:buffer} gives
$\abs{W^\sharp}\asymp_{\Lambda_W}\abs W$ and the required Katz--Tao stability.

Apply Theorem~\ref{thm:cellular} to $\cW_0$ and $\cV_0$, and extend its output to
the original indexed family by setting $Y'(V)=\varnothing$ for
$V\in\cV\setminus\cV_0$.  The extension is a refinement.  Since the discarded input
shadings form a finite null union, $\mu(\cV,Y)=\mu(\cV_0,Y)$; the empty outputs also
remove those indices from every refined pointwise fibre.  With
$\alpha_W=\kappa_W^{(K_\eps)}$, the cellular profile is
\eqref{eq:intrinsic-main-profile}.  Since
$\lambda_0^{K_\eps}\ge\lambda^{K_\eps}$, its density estimate proves
\eqref{eq:main-coarse-density}.  Its mass, compatibility, multiplicity, product,
fibre, and local-uniformity conclusions prove
\eqref{eq:main-mass}--\eqref{eq:main-local-uniformity} after increasing one
$A_\eps$ to dominate the local and four-selection logarithmic powers.  Finally,
\[
 \Delta_{\max}(\cW_0^\sharp)
 \lesssim_{\Lambda_W}\Delta_{\max}(\cW_0)
 \le\Delta_{\max}(\cW),
\]
which gives \eqref{eq:main-kt} and completes the proof.
\end{proof}

\begin{proof}[Proof of Corollary~\ref{cor:main-elementary}]
Lemma~\ref{lem:weighted-frostman-comparison} and
\eqref{eq:kappa-def}--\eqref{eq:eta-def} give, parentwise,
\[
 \kappa_W^{(K_\eps)}\ge C^{-1}\eta_W^{(K_\eps)}.
\]
Substitution into the denominators of
\eqref{eq:intrinsic-main-profile} and
\eqref{eq:elementary-main-profile} yields
\[
 \mathfrak P_\eps^{\rm int}
 \ge C^{-1}\mathfrak P_\eps^{\rm elem}.
\]
Theorem~\ref{thm:main} now gives \eqref{eq:elementary-main-density} with the
same refinement and all the same structural conclusions.
\end{proof}

\section{Consequences and the comparable-scale regime}
\label{sec:profile-corollaries}

The profile theorem becomes more familiar when the parent loads and local inflation
efficiencies are balanced.  We first record this direct consequence and use a mixed
geometry example to show that inflation balance is weaker than coordinatewise axis
comparability.  We then recover the comparable-scale conclusion by selecting a load
level before applying the main theorem.

\subsection{Balanced profiles and mixed child geometry}

\begin{corollary}[Balanced elementary profile]\label{cor:balanced-profile}
In Corollary~\ref{cor:main-elementary}, suppose
\[
 \frac{\max_{W\in\cW_0}D_W}{\min_{W\in\cW_0}D_W}
 \le\Lambda_D,
 \qquad
 \eta_W^{(K_\eps)}\ge\eta_0>0
 \quad(W\in\cW_0).
\]
Then the same refinement satisfies
\begin{equation}\label{eq:balanced-profile-density}
 \lambda(\cW_0^\sharp,Z)
 \gtrsim_{\eps,\Lambda_W}
 \Lfac^{-A_\eps}\left(\frac{w_1}{w_4}\right)^\eps
 C^{-1}\Lambda_D^{-K_\eps}\eta_0\lambda^{K_\eps}.
\end{equation}
\end{corollary}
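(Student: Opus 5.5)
The plan is to derive this corollary directly from Corollary~\ref{cor:main-elementary}, keeping exactly the same refinement $Y'$ and coarse shadings $Z$. All that remains is to bound the elementary profile $\mathfrak P_\eps^{\rm elem}$ from below by $\Lambda_D^{-K_\eps}\eta_0$. Corollary~\ref{cor:main-elementary} already gives
\[
 \lambda(\cW_0^\sharp,Z)\gtrsim_{\eps,\Lambda_W}\Lfac^{-A_\eps}\left(\frac{w_1}{w_4}\right)^\eps C^{-1}\mathfrak P_\eps^{\rm elem}\lambda^{K_\eps},
\]
so the whole argument reduces to a scalar inequality on the profile coefficient.

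For that inequality, observe that $\mathfrak P_\eps^{\rm elem}$ is exactly $\mathfrak P_{K_\eps}(\mathbf D,\boldsymbol\alpha;\mathbf p)$ from \eqref{eq:profile-def}. The parameters are:
\begin{itemize}
\item $K=K_\eps\ge2>1$;
\item $\alpha_W=\eta_W^{(K_\eps)}$;
\item probability weights $p_W=\abs W/\sum_{W'\in\cW_0}\abs{W'}$, all positive;
\item loads $D_W=T_W/\abs W$, which are positive because every $W\in\cW_0$ has a nonempty assigned family of positive-volume bodies.
\end{itemize}
Note that $q_\eps=K_\eps/(K_\eps-1)$ matches the exponent $q$ in Lemma~\ref{lem:profile-holder}.

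Estimate \eqref{eq:profile-balanced} of Lemma~\ref{lem:profile-holder} therefore applies with $\Lambda_D$ and $\alpha_0=\eta_0$, and gives $\mathfrak P_\eps^{\rm elem}\ge\Lambda_D^{-K_\eps}\eta_0$. To see it directly: the numerator is at least $D_{\min}^{K_\eps}$. In the denominator, $D_W^{q_\eps}\le D_{\max}^{q_\eps}$ and $\eta_W^{-1/(K_\eps-1)}\le\eta_0^{-1/(K_\eps-1)}$, so the denominator is at most $D_{\max}^{K_\eps}\eta_0^{-1}$.

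Substituting this bound into \eqref{eq:elementary-main-density} yields \eqref{eq:balanced-profile-density}. Since the implicit constant does not change, the conclusion holds for the same refinement, with all structural conclusions of Theorem~\ref{thm:main} intact.

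There is no real obstacle here. The only points to check are that the hypotheses are imposed over $\cW_0$, which is where the profile sums live, and that they hold with the same $K_\eps$ produced by Theorem~\ref{thm:main}. The corollary's assumption $\eta_W^{(K_\eps)}\ge\eta_0$ is phrased for exactly that exponent, so this is consistent.
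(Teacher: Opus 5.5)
Your proposal is correct and is essentially the paper's own proof: apply \eqref{eq:profile-balanced} with $\alpha_W=\eta_W^{(K_\eps)}$ and $\alpha_0=\eta_0$ to the profile \eqref{eq:elementary-main-profile}, then substitute into \eqref{eq:elementary-main-density}. Your extra checks, namely $D_W>0$ on $\cW_0$ and $q_\eps$ agreeing with the H\"older exponent $q$, are exactly the hypotheses Lemma~\ref{lem:profile-holder} needs.
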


\begin{proof}
Apply \eqref{eq:profile-balanced} to
$\alpha_W=\eta_W^{(K_\eps)}$ in
\eqref{eq:elementary-main-profile}.
\end{proof}

The parallel-volume estimate \eqref{eq:parallel-volume} gives the useful criterion
\begin{equation}\label{eq:inflation-product}
 r_W(V)\asymp
 \prod_{j=1}^4\left(1+\frac{h}{a_j(V)}\right).
\end{equation}
Thus, if the products on the right are uniformly comparable among the children in a
fixed parent, then
$r_W^-\asymp\overline r_W\asymp r_W^+$ and
$\eta_W^{(K_\eps)}\asymp1$.  This product condition allows coordinatewise axis
ratios to be unbounded.

\begin{example}[Mixed sub-buffer shapes]\label{ex:mixed-subbuffer}
Let $\tau\ll h$ and take equal-volume convex children whose John-axis quadruples are
\[
 (\tau^3,\tau,\tau,\tau)
 \quad\text{or}\quad
 (\tau^2,\tau^2,\tau,\tau).
\]
Both products of the four axes equal $\tau^6$, whereas the first two coordinatewise
axis ratios between the two shapes are $\tau$ and $\tau^{-1}$.  Hence no fixed
coordinatewise comparability constant survives as $\tau\to0$.  Since all axes are
$o(h)$,
\[
 \abs{N_h(V)}\asymp h^4,\qquad
 r_W(V)\asymp\frac{h^4}{\abs V}.
\]
Equal child volumes therefore make all inflation ratios comparable and give
$\eta_W^{(K_\eps)}\asymp1$.  Whenever the parent loads are also balanced,
Corollary~\ref{cor:balanced-profile} gives a nondegenerate conclusion although no
uniform coordinatewise child-axis comparison is available.
\end{example}

\subsection{The comparable-scale specialization}

\begin{corollary}[Comparable-scale buffered factoring]\label{cor:comparable-scale}
Assume the hypotheses of Corollary~\ref{cor:main-elementary} and, in addition, fix
$\Lambda_V\ge1$ such that for every pair of child indices $V,V'$ and every
$1\le j\le4$,
\[
 \Lambda_V^{-1}a_j(V')\le a_j(V)\le\Lambda_Va_j(V').
\]
Then there is a nonempty selected parent subfamily $\cW'\subset\cW$ for which all
the structural conclusions of Theorem~\ref{thm:main} hold, every retained coarse
index has positive final mass, and
\begin{equation}\label{eq:comparable-scale-density}
 \lambda(\cW'^\sharp,Z)
 \gtrsim_{\eps,\Lambda_V,\Lambda_W}
 \Lfac^{-A_\eps}\left(\frac{w_1}{w_4}\right)^\eps
 C^{-1}\lambda^{K_\eps}.
\end{equation}
In particular, the conclusion includes the one-refinement, multiplicity-product,
fibre, local-uniformity, and Katz--Tao properties of Theorem~\ref{thm:main}, now on
the selected parent family.
\end{corollary}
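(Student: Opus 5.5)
The plan is to select a dyadic parent-load level first, and then apply Corollary~\ref{cor:main-elementary} to the selected parents only. Coordinatewise comparability of the child axes will make the inflation efficiencies uniformly positive, and the balanced profile bound of Corollary~\ref{cor:balanced-profile} will then give \eqref{eq:comparable-scale-density}.

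\emph{Load selection.} Restrict first to $\cW_0$. For each $W\in\cW_0$ put $M_W^{(0)}=\sum_{V\in\cV_W}\abs{Y(V)}$, and group the parents by the dyadic value of $D_W=T_W/\abs W$. The rough range of $D_W$ is bounded by a crude estimate. The parents have comparable volumes $\asymp_{\Lambda_W}\prod_jw_j$. The children share comparable volumes, so $T_W\asymp\#\cV_W\cdot v$ for a common scale $v$, and $\#\cV_W\in[1,\#\cV]$. The nonempty levels therefore span a ratio $\lesssim_{\Lambda_V,\Lambda_W}\#\cV$, giving $O(\Lfac)$ levels.

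On level $k$ write $M_k=\sum_{W}M_W^{(0)}$ and $T_k=\sum_W T_W$, so that $\lambda_k=M_k/T_k$. We cannot simply choose the level of largest shaded mass, because $\lambda_k$ must be compared with $\lambda$. Instead, choose the level maximizing $M_k$ among those with $\lambda_k\ge\lambda/2$. Levels with $\lambda_k<\lambda/2$ carry less than half of the total mass $\lambda T$, so the chosen level satisfies $\lambda_k\ge\lambda/2$ and $M_k\gtrsim\Lfac^{-1}M$.

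Let $\cW'$ be that level and $\cV'=\disjunion_{W\in\cW'}\cV_W$. The hypotheses (H1)--(H3) and the $C$-Frostman assumption persist on $\cW'$. The new quantity $\Lfac'$ is at most $\Lfac$, and on $\cW'$ we have $\Lambda_D\le2$.

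\emph{Efficiencies.} By \eqref{eq:inflation-product},
\[
 r_W(V)\asymp\prod_j\bigl(1+h/a_j(V)\bigr).
\]
Coordinatewise comparability gives $a_j(V)\asymp_{\Lambda_V}a_j(V')$, so each factor is comparable between any two children, with constant $\Lambda_V$. Hence $r_W^-\asymp r_W^+$ uniformly, and
\[
 \eta_W^{(K_\eps)}=\frac{\overline r_W}{r_W^+}\Bigl(\frac{r_W^-}{\overline r_W}\Bigr)^{K_\eps}\gtrsim_{\Lambda_V}\Lambda_V^{-O(K_\eps)}.
\]
This lower bound depends only on $\eps$ and $\Lambda_V$, and is absorbed into the implicit constant.

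\emph{Application and structural conclusions.} Apply Corollary~\ref{cor:balanced-profile} to the family $(\cW',\cV')$ with input shading $Y$ restricted to it. This gives
\[
 \lambda(\cW'^\sharp,Z)\gtrsim\Lfac^{-A_\eps}(w_1/w_4)^\eps C^{-1}2^{-K_\eps}\eta_0\lambda_k^{K_\eps}.
\]
Since $\lambda_k\ge\lambda/2$, this is \eqref{eq:comparable-scale-density}. The structural conclusions come with the same refinement, extended by empty shadings to the discarded children.

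The mass loss compounds: a factor $\Lfac^{-1}$ from the selection and $\Lfac^{-A_\eps}$ from the theorem. The product estimate \eqref{eq:main-product} is then needed with the original $\mu(\cV,Y)$. Since $U(\cV',Y)\subset U(\cV,Y)$, we get $\mu(\cV,Y)\le M/\abs{U(\cV',Y)}\lesssim\Lfac\,\mu(\cV',Y)$, and we enlarge $A_\eps$ to absorb this factor.

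To ensure that every retained coarse index has positive final mass, finally shrink $\cW'$ to $\cW_+$. Dropping the empty $Z$ only decreases the denominator of $\lambda$, so the density can only increase. The fibre identity, multiplicities and local uniformity are unaffected. The Katz--Tao bound \eqref{eq:main-kt} holds by subfamily monotonicity and Lemma~\ref{lem:buffer}.

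\emph{Main obstacle.} The delicate step is the load selection: it must keep $\lambda_k\gtrsim\lambda$ while costing only a logarithmic fraction of the mass. A naive pigeonhole by mass could pick a level with low density, which is why the threshold argument above selects among levels with $\lambda_k\ge\lambda/2$. The remaining steps are routine substitutions into the earlier results.
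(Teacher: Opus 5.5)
Your proposal is correct and follows the paper's proof step for step: restrict to $\cW_0$, pigeonhole a dyadic load level, use \eqref{eq:inflation-product} to bound $\eta_W^{(K_\eps)}$ below, apply the elementary/balanced corollary to the selected family, extend by empty shadings, compare $\mu(\cV,Y)\lesssim\Lfac\,\mu(\cV',Y)$, and finally delete the empty coarse indices. The only variation is your thresholded level selection giving $\lambda_k\ge\lambda/2$, which is valid and saves a factor $\Lfac^{-K_\eps}$; however, this step is not the obstacle you describe, because the paper simply takes the level of largest mass and uses $\lambda_k=M_k/T_k\ge M_k/T\gtrsim\Lfac^{-1}\lambda$, absorbing the resulting $\Lfac^{-K_\eps}$ into $A_\eps$.
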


\begin{proof}
Discard first the initial zero-shaded-mass parent groups.  This preserves the numerator
of the fine density and decreases its denominator.  Coordinatewise child-axis
comparability and the John-volume estimate imply that all child volumes are
comparable to one value $v$.  Parent-axis comparability makes all parent volumes
comparable as well.  If $n_W=\#\cV_W$, then
\begin{equation}\label{eq:comparable-load-count}
 D_W=\frac{T_W}{\abs W}\asymp_{\Lambda_V,\Lambda_W}
 n_W\frac v{\abs W}.
\end{equation}
For a nonempty group, $1\le n_W\le N=\#\cV$, so the positive loads occupy
$O_{\Lambda_V,\Lambda_W}(1+\log(2+N))$ dyadic levels.

Choose a load level $D\le D_W<2D$ carrying a reciprocal-logarithmic fraction of
the original shaded mass, and call its parent family $\cW_1$.  If $M_1,T_1$ are
the selected shaded and body masses, let
$\cV_1=\disjunion_{W\in\cW_1}\cV_W$ and
$U_1=U(\cV_1,Y)$ and $U_{\rm full}=U(\cV,Y)$.  Then
\[
 M_1\gtrsim\Lfac^{-1}M_0,\qquad
 \lambda_1=\frac{M_1}{T_1}
 \ge\frac{M_1}{T_0}
 \gtrsim\Lfac^{-1}\lambda.
\]
The selected loads are $2$-comparable.  Since $U_1\subset U_{\rm full}$, the
retained-mass estimate also compares the original and selected average
multiplicities:
\[
 \mu(\cV,Y)=\frac{M_0}{\abs{U_{\rm full}}}
 \lesssim \Lfac\frac{M_1}{\abs{U_{\rm full}}}
 \le \Lfac\frac{M_1}{\abs{U_1}}
 =\Lfac\mu(\cV_1,Y).
\]

At the common radius $h=w_1/100$, \eqref{eq:inflation-product} and the global
coordinatewise child-axis hypothesis imply
\[
 r_W^-\asymp_{\Lambda_V}\overline r_W
 \asymp_{\Lambda_V}r_W^+,
 \qquad
 \eta_W^{(K_\eps)}
 \gtrsim_{\Lambda_V,K_\eps}1
\]
for every $W\in\cW_1$.  Lemma~\ref{lem:profile-holder}, or
Corollary~\ref{cor:balanced-profile}, therefore bounds the selected elementary
profile below by a positive constant depending only on
$\Lambda_V,\Lambda_W,\eps$.

Apply Corollary~\ref{cor:main-elementary} to the selected family
$(\cW_1,\cV_1)$, and extend its output by $Y'(V)=\varnothing$ for
$V\in\cV\setminus\cV_1$.  Its mass conclusion, combined with
$M_1\gtrsim\Lfac^{-1}M_0$, gives the mass bound relative to the original family
after increasing $A_\eps$.  Its density conclusion contains
$\lambda_1^{K_\eps}\gtrsim\Lfac^{-K_\eps}\lambda^{K_\eps}$, and hence yields
\eqref{eq:comparable-scale-density} after the same increase.
The displayed comparison of average multiplicities and the selected-family product
give the full-family multiplicity product with one further factor $\Lfac$, which is
absorbed by the same increase of $A_\eps$.  The empty extension makes all full-family
pointwise multiplicity and fibre statements literal.

The profile theorem keeps final zero-mass parent buffers in the denominator fixed by
the selected input family.
Delete precisely those empty-shading coarse indices.  The coarse numerator, coarse
union, all pointwise multiplicities, and fibre equality are unchanged; coarse density
can only increase and $\Delta_{\max}$ can only decrease.  The remaining parent
family $\cW'$ consists precisely of positive final-mass parents, so every average
multiplicity in the corollary is defined.  All structural conclusions are therefore
inherited by $\cW'$.
\end{proof}

\begin{remark}
Corollary~\ref{cor:comparable-scale} normalizes the coarse family after selecting a
parent-load level.  Theorem~\ref{thm:main}, by contrast, keeps the entire initial
positive-mass support and records load imbalance through its profile.  The two
conclusions agree in the balanced regime but use different denominators before that
specialization.
\end{remark}

\section{Structural limitations and model examples}\label{sec:sharpness}

The definitions in the main theorem reflect several distinct geometric and
normalization constraints.  The examples below explain them one at a time: the
external buffer, indexed counting, mean-normalized inflation, the parent-load
profile, the projection direction, and the two cellular discretizations.  Each model
addresses a particular mechanism rather than asserting global optimality of the
full theorem.

\subsection{Why the coarse shading lives in the external buffer}

Fix $L\gg r>0$ and, in $\R\times\R^3$, consider the cone
\begin{equation}\label{eq:cone-parent}
 W=\{(t,y):0\le t\le L,\ \abs y\le (r/L)t\}.
\end{equation}
Its shortest John scale is comparable to $r$.  At the apex,
\begin{equation}\label{eq:cone-volume}
 \abs{B(0,r)\cap W}\asymp\int_0^r\left(\frac{rt}{L}\right)^3dt
 \asymp\frac{r^7}{L^3},
 \qquad
 \frac{\abs{B(0,r)\cap W}}{\abs{B(0,r)}}\asymp(r/L)^3.
\end{equation}
For the lower bound one may integrate only over $r/4\le t\le r/2$; the upper bound
uses the cone cross-section directly.  Let
$U_\eta=W\cap B(0,\eta)$ with $0<\eta\ll r$.  Because the apex belongs to
$U_\eta$, the full neighborhood $N_r(U_\eta)$ contains $B(0,r)$.  In contrast,
\[
 W\cap N_r(U_\eta)\subset W\cap B(0,r+\eta),
\]
and the same integration gives an $O((r/L)^3)$ fraction of an $r$-ball.  Thus no
aspect-independent comparison can replace the coarse shading in the external buffer
$N_r(W)$ by $W\cap N_r(U_\eta)$.  Lemma~\ref{lem:buffer} provides the alternative
used in Theorem~\ref{thm:main}: the external buffer has comparable parent volume and
stable Katz--Tao density.

\subsection{Indexed multiplicity under coincident geometry}

Let there be $R$ different parent indices with identical geometric parent $W$, and
assign to each one a single identical child and identical nonnull shading.  The global
fine multiplicity is $R$, the inner multiplicity in every assigned group is $1$, and
the indexed coarse multiplicity is $R$.  Hence the multiplicity product is exact up to
constants.  If the coarse parents were geometrically deduplicated without weights,
the coarse multiplicity would become $1$ and the product would fail by the arbitrary
factor $R$.  This example forces indexed counting in
\eqref{eq:main-coarse-mult}, \eqref{eq:main-product}, and
\eqref{eq:main-kt}.

The same mechanism appears in Theorem~\ref{thm:icu3}.  If $Q_i=Q$ and $E_i=E$ for
all $N$ indices, then the indexed Katz--Tao density is $m=N$, whose inverse cancels
the indexed body mass $N\abs Q$.  This balance is the reason the passage from the
essentially-distinct theorem to the indexed theorem is carried out inside the paper.

\subsection{Mean-normalized density conversion}

Let $W=[0,1]^4$, fix $h=c_0a_1(W)$, include $V_0=W$, and partition $W$ into
$N=m^4$ congruent cubes $V_1,\ldots,V_N$ of side $m^{-1}$.  The indexed family
has bounded relative Frostman constant: its total body density in $W$ is $2$, a
proper convex test body cannot contain $V_0$, and the disjoint small cubes contained
in such a test body have total volume at most the test volume.

For large $m$,
\[
 r_0\asymp1,\qquad r_j\asymp N\quad(1\le j\le N).
\]
Since the large child and all small children each contribute total body mass $1$,
\[
 r_W^-\asymp1,\qquad
 \overline r_W
 =\frac{r_0\abs W+\sum_{j=1}^Nr_j\abs{V_j}}{2\abs W}
 \asymp N,\qquad
 r_W^+\asymp N.
\]
Shade only $V_0$.  Then
\[
 d_W=\frac12,\qquad
 d_{W,h}^{\rm wt}
 =\frac{r_0\abs W}
 {r_0\abs W+\sum_{j=1}^Nr_j\abs{V_j}}
 \asymp N^{-1}
 \asymp\frac{r_W^-}{\overline r_W}d_W.
\]
Thus \eqref{eq:weighted-unweighted-density} is sharp up to constants.  The elementary
local efficiency is
\[
 \eta_W^{(K)}
 =\frac{\overline r_W}{r_W^+}
  \left(\frac{r_W^-}{\overline r_W}\right)^K
 \asymp N^{-K}.
\]
Thus the example saturates the mean-normalized conversion without producing an
additional outer inflation-ratio factor.

\subsection{Load imbalance and the profile coefficient}

Take $P$ pairwise disjoint congruent parents with probability weights $p_j=1/P$.
Let $D_1=R$, $D_j=1$ for $j\ge2$, and $\alpha_j=1$.  Concentrate the fine
shaded density on the first parent.  When $R\gg P$, the global fine density can be
close to $1$, but a coarse denominator frozen over all $P$ parents has density at
most $O(P^{-1})$ if only the first parent carries substantial coarse shading.
Algebraically,
\[
 \mathfrak P_K
 =
 \frac{((R+P-1)/P)^K}
 {((R^{K/(K-1)}+P-1)/P)^{K-1}}
 \asymp P^{-1}\qquad(R\gg P).
\]
The displayed profile therefore decays like $P^{-1}$ in this regime.  The
comparable-scale corollary first selects one load level and consequently uses a
different coarse normalization.  By
Remark~\ref{rem:profile-equality}, the profile is optimal for the scalar H\"older
step; this example does not address optimality of the remaining geometric losses.

\subsection{Choice of projection direction}

Let $V=W$ be a box with side lengths
$w_1\ll w_2\le w_3\ll w_4$, take $h=w_1$, and project along the longest direction.
Then
\[
 \frac{\abs{N_h(V)}}{\abs{\pi N_h(V)}}\asymp w_4,
\]
not $h$.  The upper estimate in Lemma~\ref{lem:pv} contains the width in the
projection direction.  Choosing a shortest John direction turns that width into
$O(h)$ while still permitting arbitrarily tilted children.

\subsection{Affine orientation and cell-mass selection}

The affine parameterization in Section~\ref{sec:indexed} is needed even when all
boxes have the same center and long axis.  For
\[
 R_\phi=\operatorname{Rot}_{e_3}(\phi)
 ([-\delta,\delta]^2\times[-1,1]),\qquad
 \phi=j\delta,\quad0\le\phi\le\pi/4,
\]
all boxes have the same center and long axis, and any two square cross-sections overlap
in more than half their area.  The non-essential-distinctness graph contains a clique
of size comparable to $\delta^{-1}$.  Lemma~\ref{lem:canonical-boxes} avoids this
aspect loss by measuring relative affine transforms modulo the finite symmetry group
of the box.

Finally, arbitrary measurable shadings can place masses $2^{-2^k}$ in successive
grid cells.  Their positive dyadic levels are not bounded by a logarithm of the family
cardinality or aspect ratio.  The relative cutoff \eqref{eq:relative-cell-cutoff}
removes less than half the current mass and leaves a mass ratio at most $2J$; this is
why the fourth profile selection costs $O(\log(2+J))$ and remains valid for
densities smaller than every fixed scale power.

Finally, the product construction in Proposition~\ref{prop:lift-product} shows that
the codimension-one lift preserves, rather than improves, both the factor $H$ and the
density response inherited from the lower-dimensional theorem.


\appendix
\section{Dependence of constants and logarithmic losses}\label{app:parameters}

Theorem~\ref{thm:main} collects several logarithmic factors into one exponent
$A_\eps$.  We record their origins here and state the uniformity of the remaining
constants.

\subsection{Local analytic losses}

For a local assigned family of cardinality $N_W$, the indexed collision reduction
contributes
\[
 L_{N_W}^{-1}=(1+\lceil\log_2N_W\rceil)^{-1}.
\]
The three ordered projected John axes range between $h$ and $R$, so their dyadic
binning contributes
\[
 J_h^{-1}\gtrsim(1+\log_+(R/h))^{-3}.
\]
Since $N_W\le\#\cV$ and, uniformly in $W$,
$R/h\asymp_{\Lambda_W}w_4/w_1$, their product is bounded below by a fixed negative
power of
\[
 \Lfac=2+\log(2+\#\cV)+\log(2+\#\cW)
          +\log(2+w_4/w_1).
\]
Thus the cardinality and axis-binning losses remain in $\Lfac$, separately from the
power $(w_1/w_4)^\eps$.

\subsection{Cellular and profile losses}

The four mass-weighted selections in the cellular theorem have at most
\[
 O(\log(2+\#\cV)),\quad O(\log(2+\#\cV)),\quad
 O(\log(2+\#\cW)),\quad O(\log(2+J))
\]
choices, respectively.  They leave all parent loads in the H\"older profile.  The
cell count satisfies
\[
 J\lesssim\#\cW\prod_{j=1}^4(1+w_j/w_1),
\]
and therefore
\[
 \log(2+J)\lesssim1+\log(2+\#\cW)+4\log(2+w_4/w_1)
 \lesssim\Lfac.
\]
The retained mass is thus $\gtrsim\Lfac^{-4}$ times the incoming mass.  The
profile H\"older normalization raises this fraction to $K_\eps$, giving
$\Lfac^{-4K_\eps}$ in the coarse density.  The comparable-scale specialization
uses one additional parent-load selection.  One admissible exponent covering both
the profile theorem and that specialization is
\[
 A_\eps\ge A_{0,\eps}+5K_\eps+6.
\]
For the profile theorem itself, the four cellular selections account for the global
selection loss.

\subsection{Base-family profile quantities}

For each parent in the initial positive-mass support, the following quantities are
determined before the cellular refinement:
\[
 T_W,\quad D_W,\quad r_W^-,\quad\overline r_W,\quad r_W^+,\quad
 C^{\rm wt}_{F,h},\quad\kappa_W,\quad\eta_W.
\]
The exponent $K_\eps\ge2$ comes from the three-dimensional response and is
unchanged by the lift.  The parent probabilities $p_W$, the profile
$\mathfrak P_\eps$, and the fine density use the denominators fixed in
Section~\ref{sec:prelim}.  The data profile therefore remains displayed separately
from $\Lfac^{-A_\eps}$ and the implicit constant.

\subsection{Uniformity in the nominal parent scale}

The proof uses the common analytic radius $h=w_1/100$, whereas
Theorem~\ref{thm:r4-local} is written with $h=c_0a_1(W)$.  The parent comparison
hypothesis gives
\[
 c_0=\frac{h}{a_1(W)}\in
 \left[\frac1{100\Lambda_W},\frac{\Lambda_W}{100}\right].
\]
Every geometric constant in Lemmas~\ref{lem:pv}--\ref{lem:fr-inheritance} is uniform
on this compact interval.  Also
\[
 \frac{a_1(W)}{a_4(W)}\asymp_{\Lambda_W}\frac{w_1}{w_4},
 \qquad w_1\le\Lambda_Wa_1(W),
\]
which justifies both the common aspect factor and the common buffer radius.

The profile theorem's implicit constants may depend on $\eps$, the dimension, and
$\Lambda_W$; the comparable-scale specialization may additionally depend on
$\Lambda_V$.  They are uniform in $\lambda$, the family cardinalities, duplicate
multiplicity, child orientation, child aspect ratio, and child minimum scale.  The
explicit profiles and logarithms retain the corresponding base-family data.

\section{Derivation of the three-dimensional input}\label{app:external}

Proposition~\ref{prop:ed-cu3} combines several statements from Wang and Zahl with a
short localization and density-threshold argument.  This appendix gives the notation
dictionary for that derivation and distinguishes the source inputs from the indexed
extension proved in Section~\ref{sec:indexed}.

\medskip
\noindent\textit{$a,b$ (manuscript) $\longleftrightarrow a,b$ (source).}
These are the normalized John dimensions in Definitions~3.1 and 5.4; here
$a=q_1/q_3$ and $b=q_2/q_3$.

\smallskip
\noindent\textit{$N,v$ $\longleftrightarrow \#\mathcal P,\abs P$.}
These are the cardinality and comparable body-volume scale in (5.5), with
$v\asymp ab$ after normalizing the longest dimension to one.

\smallskip
\noindent\textit{$\rho$ $\longleftrightarrow$ the dense-shading parameter.}
Definition~3.1 uses the same average-mass notion,
$\sum_P\abs{Y(P)}\ge\rho\sum_P\abs P$, and Definition~5.4 assumes
$\rho\ge a^\eta$.

\smallskip
\noindent\textit{$m=\Delta_{\max}(\cQ)\longleftrightarrow
C_{\mathrm{KT-CW}}(\mathcal P)$.}
The source locations are Definition~1.3$'$(A), equation~(4.1), and Remark~4.2(A).
The quantities agree after identifying $\mathcal P=\cQ$; the source family at this
stage is a set, not our later indexed multiset.

\smallskip
\noindent\textit{$\ell$ $\longleftrightarrow C_{\mathrm{F-SW}}(\mathcal P)$.}
The source locations are Definition~1.3$'$(B), equation~(4.2), and Remark~4.2(A),
with the test collection restricted to slabs. This auxiliary slab constant is not the
relative convex Frostman constant $C_F(\cQ,B')$ used later in the paper.

\smallskip
\noindent\textit{$\mathcal D(\cQ)$ $\longleftrightarrow D(\mathcal P)$.}
The source locations are Definition~5.4, equation~(5.6), and Remark~5.6. The
calligraphic letter here avoids collision with the manuscript's other density symbols.

\smallskip
\noindent\textit{$t,t$ $\longleftrightarrow \sigma,\omega$.}
These are the positive parameters in $F(\sigma,\omega)$, Definition~5.4. In
Proposition~\ref{prop:ed-cu3} both are chosen as $t=\eps/10$, while the source's
displayed $a^\varepsilon$ loss is invoked with $\varepsilon=\eps/2$; thus the source
threshold exponent is $\eta_{\mathrm{WZ}}(\eps/2;t,t)$.
\medskip

With this dictionary, (5.5) agrees with \eqref{eq:wz-positive} up to the fixed John
comparability constants and the source constant $\kappa$. The facts
$Nv\lesssim m$, $m\gtrsim1$, and $\ell\lesssim a^{-1}$ come, respectively, from
the fixed ambient localization, testing the Katz--Tao constant on one member, and the
elementary slab-volume test.  For every $P$, subfamily monotonicity gives
$C_{\mathrm{KT-CW}}(\mathcal P[N_b(P)])\le m$, so Remark~5.6, Situation~2, gives
$\mathcal D\lesssim m^{1/2}$.  Hence, writing
$B=m^{-3/2}\ell Nv^{1/2}$,
\[
 B\mathcal D\lesssim m^{-1}\ell Nv^{1/2}
 \lesssim\ell v^{-1/2}\lesssim a^{-2}.
\]
Thus the positive source parameters are absorbed into $a^\eps$ by the derivation
following \eqref{eq:wz-positive}, yielding
\eqref{eq:ed-cu3}. The branch $0<\rho<a^\eta$ uses the
largest shading and is internal to Proposition~\ref{prop:ed-cu3}.  The source chain
begins at $D(0,0)$, but the convex assertion used in the union estimate is
$F(t,t)$ with $t=\eps/10>0$.

The relevant source result chain is precise. Theorem~1.9 gives $D(0,0)$.
At $\sigma=\omega=t$, Definition~1.5 makes $D(t,t)$ weaker by the factor
$(\#\mathcal T)^{-t}\le1$. Proposition~1.6 gives $E(t,t)$, and
Proposition~5.14 gives $F(t,t)$ for $0<t\le2/3$
\cite{wangzahl2025convexunions}. Remark~5.6, Situation~2, supplies the separate
bound for the geometric factor $D$ used above.

\begin{enumerate}
\item Wang and Zahl's three-dimensional convex assertion $F(\sigma,\omega)$ is used
for positive $\sigma$ and $\omega$, together with their endpoint theorem and
equivalence result \cite{wangzahl2025convexunions}.  The source assertion assumes an
essentially-distinct family and a positive density threshold.  Proposition
\ref{prop:ed-cu3} converts it to the required all-density form by
positive-parameter absorption and the separate largest-shading argument.

\item The source definition of essential distinctness is
\eqref{eq:essentially-distinct}, so coincident indexed bodies lie outside its scope.
Lemma~\ref{lem:canonical-boxes} and Theorem~\ref{thm:icu3} provide the indexed
extension, with the explicit loss $L_N^{-1}$.

\item Guth, Wang, and Zahl formulate the three-dimensional factoring architecture and
develop its induced-shading density mechanism \cite{guthwangzahl2026streamlined}.
Subsequent analysis of the printed non-cellular transitions gives a conditional
cellular replacement for the relevant compatibility interface
\cite{huayang2026cellularrepair}.  These works motivate the factoring architecture;
the four-dimensional theorem uses the self-contained cellular argument of Section
\ref{sec:cellular}.

\item The sticky works of Wang--Zahl and Choudhuri contain compatible coarse/fine
refinements and multiplicity products
\cite{wangzahl2026sticky,choudhuri2024stickyR4}.  They provide context for the
coarse/fine organization, while Theorem~\ref{thm:main} is proved without their
extremal or sticky hypotheses.

\item The four-dimensional maximal theorem of Borges, Chan, Chen, Liu, Xi, and Zhan
\cite{borgesetal2025restrictionR4} locates the structural result within the broader
four-dimensional problem.  The factoring proof itself is independent of its
restriction, two-ends, plany, and transversality conclusions.
\end{enumerate}

The division of roles is therefore explicit: $F(t,t)$ is used at positive parameters,
the small-density branch is proved in Proposition~\ref{prop:ed-cu3}, and the passage
from essentially-distinct bodies to indexed families is carried out by Theorem
\ref{thm:icu3}.

\section{Supplementary boundary cases}\label{app:counterexamples}

The main text emphasizes the mechanisms that drive the estimates.  The examples here
clarify three set-theoretic and normalization conventions used by the common
refinement.

\subsection{Coarse shadings and fine fibres}

Let a parent have positive fine shading in a small measurable subset of a selected
cell $Q$.  By definition, $Z(W^\sharp)$ contains the whole cell.  At a point
$x\in Q$ outside the fine shading one has $x\in Z(W^\sharp)$ but
$(\cV_W)_{Y'}(x)=\varnothing$.  Thus \eqref{eq:main-fibre} is valid because empty
fibres contribute nothing, but the stronger parent-set identity
\[
 \{W:(\cV_W)_{Y'}(x)\ne\varnothing\}
 =\{W:x\in Z(W^\sharp)\}
\]
is false in general.

\subsection{The global null cleanup}

Start with several parents having positive shading in each of several cells.  Add one
parent with positive shading in one such cell and with nonempty null dust in the other
cells.  Positive-measure activity counts the added parent only in its positive cell,
but the set-theoretic neighborhood of its union sees the dust in all cells.  Without
deleting the dust, neither the pointwise implication from fine contribution to active
parent nor the grid-neighbor comparison \eqref{eq:grid-neighbor} is valid everywhere.

Deleting null pieces separately for each parent is insufficient at points shared by
different parents, because it can change the global count while leaving the point in
the global union.  The single set $E$ in \eqref{eq:null-cleanup} is a finite union of
null parent--cell pieces and is removed from every child.  It preserves every mass and
all previously selected pointwise multiplicities.

\subsection{Limits of unweighted Frostman inheritance}

Let $W=[0,1]^4$, include $V_0=W$, and let
\[
 V_j=I_j\times[0,1]^3,\qquad1\le j\le N,
\]
where the consecutive disjoint intervals $I_j$ have length $\eta=N^{-4}$.  The
original indexed family has bounded relative Frostman constant: a proper convex test
body cannot contain $V_0$, while the small slabs it contains have disjoint interiors
and total volume at most that of their convex hull.

Take $h=N^{-2}$ and $P_j=N_h(V_j)$.  The convex body
\[
 K=N_h([0,N\eta]\times[0,1]^3)
\]
contains $P_1,\ldots,P_N$ but not $P_0$.  Since $N\eta\ll h$,
\[
 \abs K\asymp h,\qquad \abs{P_j}\asymp h,\qquad
 \frac{\sum_{j=1}^N\abs{P_j}}{\abs K}\asymp N.
\]
Meanwhile the total thickened-body density in a fixed containing box is bounded.
Thus the relative Frostman constant after thickening can grow like $N$, showing
that an inflation-sensitive coefficient is unavoidable; that geometric calculation
alone does not identify the sharp universal ratio.

The intrinsic quantity for the transfer is the exact weighted coefficient
$C^{\rm wt}_{F,h}$ from \eqref{eq:weighted-frostman}.  The universal comparison
\[
 C^{\rm wt}_{F,h}\le C\,r_+/\overline r
\]
is the proof-valid upper bound of
Lemma~\ref{lem:weighted-frostman-comparison}.  Here the relevant scales are
\[
 r_0\asymp1,\qquad r_j\asymp h/\eta=N^2,\qquad
 r^-\asymp\overline r\asymp1,\qquad r^+\asymp N^2.
\]
Indeed, the small slabs have total weighted mass
$\sum_{j=1}^Nr_j\abs{V_j}\asymp Nh=N^{-1}$, whereas the large child contributes
mass comparable to $1$.  Testing the weighted coefficient on the unthickened convex
strip $[0,N\eta]\times[0,1]^3$ gives weighted density comparable to $N^2$.
This example isolates the need for an inflation-sensitive coefficient; it does not
assert sharpness of the full inheritance or profile theorem.  Its radius is not tied
to the shortest scale of the unit parent.  The scale-compatible construction in
Section~\ref{sec:sharpness} separately treats the mean-normalized density conversion
when $h\asymp a_1(W)$.


{\small
\bibliographystyle{amsplain}
\bibliography{references}
}

\end{document}